\documentclass{amsart}
\usepackage{amsmath,amssymb,amsthm,amscd}
\newtheorem{theo}{Theorem}
\newtheorem{defi}[theo]{Definition}
\newtheorem{lemm}[theo]{Lemma}
\newtheorem{prop}[theo]{Proposition}
\newtheorem{cor}[theo]{Corollary}

\newtheorem{prob}[theo]{Problem}

\theoremstyle{definition}
\newtheorem{rema}[theo]{Remark}
\newtheorem{exam}[theo]{Example}
\begin{document}

\title[Maximal amenable MASAs arising from free products of hyperfinite factors]{Maximal amenable MASAs of the free group factor of two generators arising from the free products of hyperfinite factors}
\author{Koichi Shimada}
\email{kshimada@math.kyoto-u.ac.jp}
\address{Department of Mathematics, Kyoto University, Kita-shirakawa Oiwake-cho, Sakyo-ku, Kyoto, 606-8502, Japan}
\date{}
\begin{abstract}
In this paper, we give examples of maximal amenable subalgebras of the free group factor of two generators.
More precisely, we consider two copies of the hyperfinite factor $R_i$ of type $\mathrm{II}_1$. 
From each $R_i$, we take a Haar unitary $u_i$ which generates a Cartan subalgebra of it.
We  show that the von Neumann subalgebra generated by the self-adjoint operator  $u_1+u_1^{-1}+u_2+u_2^{-1}$ is maximal amenable in the free product.
This provides infinitely many non-unitary conjugate maximal amenable MASAs.
\end{abstract}

\maketitle

\section{Introduction}
In operator algebra theory, maximal amenable subalgebras  have fascinated  many researchers because they give a good insight into ambient (non-amenable) factors.
The history of the research dates back to the middle of 1960's.
In 1967, Kadison  conjectured that any maximal amenable von Neumann subalgebra of any factor of type $\mathrm{II}_1$ was isomorphic to the hyperfinite factor of type $\mathrm{II}_1$.
In the early 1980's,  Popa \cite{P} solved this conjecture negatively.
He showed that the generator subalgebra of any free group factor is maximal amenable (Even such a simple example had not been shown to be maximal amenable until his work, which tells us the difficulty of the problem).
In order to show that the subalgebra is maximal amenable, he used the ultraproduct technique and looked at a property, which is called the asymptotic orthogonality property.
Even now, his method has a strong influence upon the research on maximal amenability.

After Popa's work, his result has been generalized by many researchers.
Popa's example can be seen as an amenable free component of a free product.
In this direction, Boutonnet--Houdayer \cite{BH} reached a general structural theorem of maximal amenable subalgebras of (amalgamated) free products which may be of type III (See also Houdayer \cite{H}, Houdayer--Ueda \cite{HU} and Ozawa \cite{O}).
Popa's result can also be seen as a subalgebra arising from a subgroup of a group factor.
In this direction, Boutonnet--Carderi \cite{BC} gave a sufficient condition for a subalgebra of any  group factor coming from a subgroup to be maximal amenable.
It is also remarkable that their proof is quite concise, which relies on the study of non-normal states.

However, in order to understand the inner structure of the free group factors, it is also important to investigate subalgebras which come neither from free components nor subgroups.
The reason is that there are some non-trivial presentations of the free group factors (See Dykema \cite{D}, Guionnet--Shlyakhtenko \cite{GS} for example).
This means that the free group factors have certain flexibility, which makes them  interesting.
Hence it is important to investigate what kind of non-trivial automorphisms the free group factors admit; whether a given MASA is conjugate to the generator subalgebras by automorphisms or not.
For this purpose, it is helpful to consider whether a given MASA has similar properties to those of the generator subalgebas.
In that sense, the radial MASA of the free group factors is  an interesting example (For the definition, see Cameron--Fang--Ravichandran--White \cite{CFRW} for example).
It is not known whether the radial MASA is conjugate to one of the generator MASAs by automorphisms or not; although it is not contained in any free component or subgroup in an obvious way, there is no known property which distinguishes the radial MASA from the generator MASAs.
In particular, it was shown to be maximal amenable by \cite{CFRW} (See also  Wen \cite{W} for a simplified proof).
Their strategy for proving the maximal amenability was to determine the form of a sequence which asymptotically commutes with the radial MASA by using a basis constructed by Radulescu \cite{Rad}.
Although their strategy itself may be simple, it could not have been carried out without their tough computing power.

Motivated by this example, we present new examples of maximal amenable von Neumann subalgebras of a free group factor.
More precisely, we consider two copies of the hyperfinite factor $R_i$ ($i=1,2$) of type $\mathrm{II}_1$. 
By Dykema \cite{D}, the free product $R_1*R_2$ is isomorphic to the free group factor of two generators.
From each $R_i$, we take a Haar unitary $u_i$ which generates a Cartan subalgebra of it.
We  show that the von Neumann subalgebra generated by the self-adjoint operator  $u_1+u_1^{-1}+u_2+u_2^{-1}$ is maximal amenable in the free product.

Although our construction is similar to that of the radial MASA, it has a different aspect.
Unlike the radial MASA, our construction provides ``many'' examples.
Although we do not know whether they are really mutually  non-conjugate by automorphisms or not, they are neither mutually unitary conjugate nor conjugate by automorphisms arising from the free components.

In order to show that our subalgebras are maximal amenable, we would like to develop a similar strategy to that of Cameron--Fang--Ravichandran--White \cite{CFRW}, in which they show Popa's asymptotic orthogonality property for the radial MASA.
However, there are two problems. 
First, although their proof depends on combinatorics of the free groups, we cannot expect to find out such a good group-like structure in our setting.
Therefore, we first consider the special case, namely, the case when the Haar unitaries come from generators of the irrational rotation $\mathrm{C}^*$-algebras.
After that, we reduce the general case to the special case.
The idea of reducing the problem to that in a case when the factor comes from the irrational rotation $\mathrm{C}^*$-algebras is conceived by Ge \cite{G}.
He embedded a system of the general case into that of the special case.
However, just an imitation of Ge \cite{G} does not work well in our setting.
Our key idea for overcoming this difficulty is to embed ``asymptotically'' a subalgebra into another one.

Even after the problem is reduced to the special case, there arises another problem; combinatorics of the irrational rotation $\mathrm{C}^*$-algebras is complicated.
The irrational rotation $\mathrm{C}^*$-algebras can be seen as deformations of $\mathbf{Z}^2$.
Hence it is possible to use their algebraic structures.
However, since $\mathbf{Z}^2*\mathbf{Z}^2$ is ``less free'' than $\mathbf{F}_2$ is,  its combinatorics is more complicated, which requires further computations.

This paper is organized as follows.
In Section 2, we explain the main theorem of this paper.
Sections 3 to 7 are devoted to showing the main theorem.
In Section 3, we investigate the combinatorics for the special case and construct a good basis of $L^2$-space.
In Sections 4 and 5, we  do some analytical computations necessary to show the asymptotic orthogonality property.
In Section 6, we reduce the problem to the special case and show that the subalgebras have the asymptotic orthogonality property.
In order to show the maximal amenability of subalgebras, besides the asymptotic orthogonality property, we need to show the singularity of them.
In Section 7, we show the singularity of the subalgebras and conclude that they are maximal amenable.

\textbf{Acknowledgment.}
The author would like to thank Chenxu Wen for explaining the main idea of his recent work \cite{W} and bringing the author's attention to this topic.
He also appreciates Professor Cyril Houdayer's many pieces of useful advice, particularly introducing him Ge \cite{G}.

\section{Main theorem}
The main theorem of this paper is the following.
\begin{theo}
\label{2-1}
For each $i=1,2$, let $R_i$ be the hyperfinite factor of type $\mathrm{II}_1$ and $w_i \in R_i$ be a Haar unitary of $R_i$ which generates a Cartan subalgebra of $R_i$.
Then the von Neumann subalgebra $B$ generated by the self-adjoint operator $w_1+w_1^{-1}+w_2+w_2^{-1}$ is maximal amenable in the free product $R_1*R_2$ with respect to the traces.
\end{theo}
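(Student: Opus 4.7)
The plan is to verify Popa's criterion for maximal amenability: a unital amenable subalgebra $B\subset M$ of a finite von Neumann algebra is maximal amenable provided that it is a MASA, is singular, and satisfies Popa's asymptotic orthogonality property (AOP), namely that for every free ultrafilter $\omega$, every $y\in B'\cap M^\omega$ with $E_{B^\omega}(y)=0$, and every $a_1,a_2\in M\ominus B$, the vectors $a_1y$ and $ya_2$ are orthogonal in $L^2(M^\omega)$. Since $B$ is generated by a single self-adjoint element it is abelian hence amenable, and (as is standard) once AOP is in hand the maximal-abelian property follows by applying it to a hypothetical element of $(B'\cap M)\ominus B$. So the real content is (a) verifying AOP and (b) verifying singularity.

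The first main step is the reduction to a special case, as outlined in the introduction. I would first fix, for each $i$, a realization of $R_i$ as the weak closure of an irrational rotation $\mathrm{C}^*$-algebra $A_{\theta_i}$ with canonical unitary generators $w_i,v_i$ satisfying $v_iw_i=e^{2\pi i\theta_i}w_iv_i$, so that $w_i$ is a Haar unitary generating a Cartan subalgebra. In this special case one has an explicit basis of $L^2(R_i)$ indexed by $\mathbf{Z}^2$, and hence, by taking alternating reduced products across the free product $R_1*R_2$, an explicit orthonormal basis of $L^2(R_1*R_2)\ominus L^2(B)$. After establishing the theorem in this special case, I would transfer back to a general Cartan Haar unitary. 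Following Ge's idea one might hope to \emph{embed} the general system into an irrational-rotation one, but this literal embedding is not available; the novelty here is to embed the general MASA $B$ \emph{asymptotically} into its special-case counterpart, i.e.\ inside the ultrapower, which is enough to pull the AOP statement back because AOP is itself an ultraproduct statement.

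The second main step is the verification of AOP in the special case. Given $y=(y_n)\in B'\cap M^\omega$ with $E_{B^\omega}(y)=0$, the asymptotic commutation $\|[x,y_n]\|_2\to 0$ with $x=w_1+w_1^{-1}+w_2+w_2^{-1}$ places strong restrictions on the coefficients of $y_n$ in the basis constructed above: expanding the commutator on each basis vector and equating to a null sequence forces cancellations which imply that the mass of $y_n$ concentrates on words of a very restricted form. One then checks, for $a_1,a_2$ reduced words lying outside $B$, that $\langle a_1y_n,y_na_2\rangle\to 0$ by a direct orthogonality count on the supporting basis vectors. The analytical estimates that make this cancellation rigorous occupy Sections 4--5 of the paper. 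Finally, singularity is checked by a separate argument (Section 7), for instance by showing that any unitary normalizing $B$ must preserve a canonical spectral decomposition of $x$ and therefore already lie in $B$; together with AOP, Popa's theorem then delivers maximal amenability.

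The principal obstacle I anticipate is precisely this combinatorial-analytical heart of the AOP argument in the special case: unlike in the radial-MASA setting of Cameron--Fang--Ravichandran--White, the ambient free product $\mathbf{Z}^2*\mathbf{Z}^2$ is ``less free'' than $\mathbf{F}_2$, so reduced-word combinatorics carries additional relations coming from each $\mathbf{Z}^2$ factor and the commutator estimates become substantially heavier. A close second obstacle is the asymptotic embedding trick used to reduce from an arbitrary Cartan Haar unitary to the irrational-rotation one, which replaces Ge's honest embedding with a more delicate ultraproduct argument that has to be set up so as to preserve both the $B$-structure and the freeness between the two copies.
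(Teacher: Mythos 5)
Your outline follows essentially the same route as the paper: Popa's criterion (singularity plus the asymptotic orthogonality property), reduction to the case where the Haar unitaries come from irrational rotation $\mathrm{C}^*$-algebras via a Radulescu-type basis and commutator estimates, and an asymptotic (ultrapower) embedding in place of Ge's honest embedding to handle a general Cartan Haar unitary. The one place you diverge is singularity, which the paper obtains by proving that $B$ is mixing in $R_1*R_2$ (reducing to the known mixing of the radial MASA and invoking Jolissaint--Stalder) rather than by a normalizer/spectral argument, but this is a minor variation and your "for instance" correctly signals it.
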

The following is an example of unitaries which satisfy the assumption of Theorem \ref{2-1}.
\begin{exam}
Let $\theta $ be an irrational number and $A_{\theta }$ be the universal $\mathrm{C}^*$-algebra generated by two unitaries $u,v$ with $uv=e^{2\pi i \theta}vu$ (an irrational rotation $\mathrm{C}^*$-algebra).
Then the $\mathrm{C}^*$-algebra   $A_\theta$ has a tracial state $\tau $ defined by $\tau (u^kv^l)=\delta _{k,0}\delta_{l,0}$ for $k,l\in \mathbf{Z}$.
Take the GNS representation of $A_\theta $ with respect to the trace $\tau$.
Then the weak closure $R$ of $A_\theta$ in the GNS representation is  isomorphic to the hyperfinite factor of type $\mathrm{II}_1$.
For any $k,l\in \mathbf{Z}\setminus \{ 0\}$ without any non-trivial common divisor, the unitary $u^kv^l$ is Haar and generates a Cartan subalgebra of $R$.
\end{exam}
\begin{proof}
It is possible to choose  $k',l'\in \mathbf{Z}\setminus \{0\}$ with $kl'-k'l=1$ because $k$ and $l$ do not have any non-trivial common divisor.
Then the map $u\mapsto u^kv^l$, $v\mapsto u^{k'}v^{l'}$ extends to an automorphism of $R$.
\end{proof}
We will see that Theorem \ref{2-1} has a possibility of producing many examples of maximal amenable MASAs.
We see that the theorem provides many subalgebras which are mutually non-unitary conjugate.
Let $u_i$, $v_i$ be generators of $R_i$ explained in the above example.
Let $w_i^{k,l}$ be a Haar unitary generating the von Neumann subalgebra $ \{ u_i^kv_i^l\} ''$.
Set $B_{k,l}:=\{ w_1^{k,l}+(w_1^{k,l})^{-1}+w_2^{k,l}+(w_2^{k,l})^{-1}\}'' \subset R_1*R_2$.
We show that $B_{k,l} \not \preceq B_{k',l'}$ for any $(k,l)\not =(k',l')$.
Let $a_n$ be unitaries of $B_{k,l}$ which converges weakly to $0$.
Then for any large $M>0$, any small $\epsilon >0$, there exists $N>0$ such that for any $n\geq N$, there exists a linear combination $b_n$ of the words $w$ with the following conditions.

\bigskip

(1) We have $\| a_n -b_n\| _2<\epsilon $.

(1) For any linear component $w$  of $b_n$, the length of $w$ is not smaller than $M$.

(2) For any linear component $w$ of $b_n$, the ratio of the number of $u_i$'s ($i=1,2$) in $w$  and the number of $v_i$'s in $w$  is $k/l$.

\bigskip

Take two words $x,y $ of $R_1*R_2$.
Then if we take $M>0$ large enough compared to the lengths of $x$ and $y$, for any linear component $w'$ of $xb_ny$, the ratio of  the number of $u_i$'s in  $w'$ and $v_i$'s in $w'$ is almost $k/l$.
Thus we have $E_{B_{k',l'}}(xb_ny)=0$, which implies that $B_{k,l}\not \preceq B_{k',l'}$.

In this way, it is possible to construct many non-unitary conjugate subalgebras.
However, our central interest is whether they are conjugate by automorphisms or not.
Thus we close this section with the following problem.
\begin{prob}
Are the maximal amenable subalgebras  constructed in Theorem \ref{2-1} mutually conjugate by automorphisms?
Are they conjugate to the generator MASA?
\end{prob}
Although any two Cartan subalgebras of the hyperfinite factor of type $\mathrm{II}_1$ are mutually conjugate by an automorphism, it is impossible to control the position of Haar unitaries  by the automorphism.
Hence it is not clear whether it is possible to conjugate two of them by automorphisms arising from free components.
\section{Radulescu type basis}
In order to show the main theorem, we first consider the case when the Haar unitaries come from the irrational rotation $\mathrm{C}^*$-algebras and later we reduce the general case to the special case.
Hence until the end of Section 4, we always assume the following condition.

\bigskip

\textbf{Condition.} For each $i=1,2$, there exist Haar  unitaries $u_i$, $v_i$ and a complex number $d$ of absolute value $1$ satisfying the following conditions.

\bigskip

(0) For any $n \not =0$, we have $d^n\not =1$.

(1) The factor $R_i$ is generated by $u_i$ and $v_i$ for each $i$.

(2) We have $u_iv_i=dv_iu_i$ for each $i$.

\bigskip

Set $M:=R_1*R_2$, $A:=u_1+u_1^{-1}+u_2+u_2^{-1}$.

The purpose of this section is to construct a good basis of $L^2$-space under this condition (Corollary \ref{cor-3}), which is motivated by Radulesucu \cite{Rad}.

\subsection{Decomposing $L^2M$ into three pieces}
Let $w=w_1\cdots w_n$ be a word consists of the letters $\{ u_1^{\pm 1}, u_2^{\pm 1}, v_1^{\pm 1}, v_2^{\pm 1}\}$. 
The number $n$ of letters contained in the word $w$ is said to be the \textit{word length} of $w$ and denoted by $|w|$.
We say that the word $w$ is \textit{reduced} if the word length does not decrease by finitely many times of the transformations $u_i^{\pm 1}u_i ^{\mp 1} \longleftrightarrow 1$, $v_i^{\pm 1}v_i^{\mp 1}\longleftrightarrow 1$ and $u_i ^sv_i^t\longleftrightarrow v_i^tu_i^s$ ($i=1,2$, $s,t=\pm 1$).
On the set of reduced words, we introduce an equivalence relation defined by $u_i^sv_i^t\longleftrightarrow v_i^tu_i^s$.
Then any non-trivial word $w $ is equivalent to a word of the following form.
\[ u_{i_1}^{k_1}v_{i_1}^{l_1}u_{i_2}^{k_2}v_{i_2}^{l_2}\cdots u_{i_n}^{k_n}v_{i_n}^{l_n},\]
where $n \in \mathbf{Z}_{\geq 1}$, $ i_1, \cdots , i_n \in \{ 1,2\}$ with $i_1\not =i_2\not =  \cdots \not = i_n$ (This notation means that $i_1\not =i_2$, $i_2\not =i_3, \cdots, i_{n-1}\not =i_n$), and $|k_t|+|l_t|\geq 1$ for $t=1,\cdots , n$.
A word of this form is said to be a \textit{completely reduced word}.
For each $l \geq 0$, let $\tilde{W}_l^0$ be the set of all completely reduced words with length $l$ consisting only of letters $u_i^{\pm 1}$ ($i=1,2$).
Let $\tilde{W}_l^1$ be the set of all completely reduced words with length $l$ such that if we write them as $u_{i_1}^{k_1}v_{i_1}^{l_1}u_{i_2}^{k_2}v_{i_2}^{l_2}\cdots u_{i_n}^{k_n}v_{i_n}^{l_n}$, exactly one of $l_t$'s is non-zero.
Let $\tilde{W}_l^2$ be the set of all completely reduced words with length $l$ such that if we write them as $u_{i_1}^{k_1}v_{i_1}^{l_1}u_{i_2}^{k_2}v_{i_2}^{l_2}\cdots u_{i_n}^{k_n}v_{i_n}^{l_n}$, at least two of $l_t$'s are non-zero.
We often regard a reduced word as a unitary element of $M$.
In the following, we identify the set $\bigcup _{l\geq 0, \ i=0,1,2} \tilde{W}_l^i$ as a  subset of $L^2M$.
Then it is an orthonormal basis of $L^2M$.
Set
\[ W_l^i:=\mathrm{span}\tilde{W}_l^i\]
for each $i=0,1,2$ and $l\geq 0$.
Set
\[ W_l:= W_l^0\oplus W_l^1\oplus W_l^2\]
for each $l\geq 0$.
Let $q_l$ be the orthogonal projection onto $W_l$.

\bigskip

For each $l\geq 0$, set
\[ \chi _l:=\sum _{w\in W_l^0}w,\]
which is a self-adjoint operator of $M$.
As mentioned in p.299 of Radulescu \cite{Rad}, the self-adjoint operators $\{ \chi _l\}_{l\geq 1}$ satisfy
\[ \chi _l\chi _1=\chi _1\chi _l=\chi _{l+1}+3\chi _{l-1}\]
for $l\geq 2$ and
\[ \chi _1\chi _1=\chi _2+4.\]
Notice that $A=\{ \chi _1\}''$.
Set 
\[ S_l^i :=\{ q_l (\chi _1 w), q_l(w\chi _1)\mid w\in W_k^i, k\leq l-1\} \]
for $i=0,1,2$,
\[ S_l:=S_l^0\oplus S_l^1\oplus S_l^2.\]
For $\gamma \in W_l^i $ and $w\in W_k^i$ with $k \leq l-1$, we have
\[ \langle \gamma , q_l(\chi _1w)\rangle =\langle \gamma , \chi _1 w\rangle =\langle \chi _1 \gamma , w\rangle .\]
Similarly, we have $\langle \gamma , q_l (w\chi _1)\rangle =\langle \gamma \chi _1 , w\rangle $.
Hence $\gamma \in W_l^i\ominus S_l^i$ means that when we expand $\chi _1 \gamma $ and $\gamma \chi _1$ by the orthonormal basis consisting of completely reduced words, no term with length less than $l$ appears.
Hence we have the following decomposition.
\begin{lemm}
\label{3-2}
We have 
\[ W_l\ominus S_l=(W_l^0 \ominus S_l^0) \oplus (W_l^1 \ominus S_l^1) \oplus (W_l^2\ominus S_l^2).\]
\end{lemm}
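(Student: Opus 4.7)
The plan is to reduce the statement to the assertion $S_l^i\subset W_l^i$ for $i=0,1,2$; once this inclusion is in hand, the decomposition follows formally from the pairwise orthogonality of the $W_l^i$'s, which is built into the definition (they are spans of disjoint subfamilies of the orthonormal basis of completely reduced words).

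To prove $S_l^i\subset W_l^i$ I would analyse what happens when $\chi_1$ is multiplied, on either side, against a completely reduced word $w=u_{i_1}^{k_1}v_{i_1}^{l_1}\cdots u_{i_n}^{k_n}v_{i_n}^{l_n}$. Since $\chi_1=u_1+u_1^{-1}+u_2+u_2^{-1}$, it suffices to understand $u_j^{\pm 1}w$ and $wu_j^{\pm 1}$ and to check that each of these, after completely reducing, is again a completely reduced word whose number of nonzero $l_t$'s equals that of $w$. The key observation is that multiplication by a $u$-letter never creates, modifies, or cancels a $v$-letter: the commutation relation $u_jv_j=dv_ju_j$ only permutes (possibly with a scalar) adjacent $u$- and $v$-letters of the same index, and the other simplifications $u_j^{\pm 1}u_j^{\mp 1}\leftrightarrow 1$ affect only $u$-letters. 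Consequently the multiset of $v$-exponents $(l_1,\ldots,l_n)$ appearing in $w$ is preserved (up to the purely formal rewriting into the normal form), so the type index $i\in\{0,1,2\}$ is preserved. I would record this in a brief case analysis covering (a) $i_1\neq j$, where a new block is prepended, (b) $i_1=j$ with $k_1\neq 0$, where the first exponent changes by $\pm 1$ and possibly the first block disappears when it becomes $u_j^0v_j^{l_1}$, and (c) $i_1=j$ with $k_1=0$, where $u_j$ simply turns $v_j^{l_1}\cdots$ into $u_j v_j^{l_1}\cdots$ (and $u_j^{-1}$ turns it into $u_j^{-1}v_j^{l_1}\cdots=d^{-l_1}v_j^{l_1}u_j^{-1}\cdots$, which is still completely reduced up to the commutation equivalence).

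From this, if $w\in W_k^i$ with $k\leq l-1$, then $\chi_1 w$ and $w\chi_1$ lie in $\bigoplus_{k'}W_{k'}^i$; applying $q_l$ therefore produces a vector in $W_l^i$. Hence $S_l^i\subset W_l^i$, and the three subspaces $S_l^0,S_l^1,S_l^2$ are pairwise orthogonal inside $W_l$. The identity
\[ W_l\ominus S_l=(W_l^0\oplus W_l^1\oplus W_l^2)\ominus (S_l^0\oplus S_l^1\oplus S_l^2)=\bigoplus_{i=0}^{2}(W_l^i\ominus S_l^i)\]
is then immediate.

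The only real content is the bookkeeping in the case analysis above; I expect the sole potential pitfall to be case (c), where one must be careful that the commutation relation only introduces a scalar of modulus one and does not alter the $v$-exponent pattern, so that the resulting word still belongs to $\tilde W_{*}^i$ rather than drifting to a different type.
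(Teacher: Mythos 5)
Your proposal is correct and rests on the same key observation as the paper's own proof, namely that multiplication by $u_j^{\pm 1}$ preserves the pattern of nonzero $v$-exponents of a completely reduced word, so that no interaction occurs between terms of different types $i=0,1,2$. The paper phrases this as the statement that terms of $\chi_1\xi_0$ cannot cancel against terms of $\chi_1(\xi_1+\xi_2)$ for $\xi\in W_l\ominus S_l$, whereas you package it as the inclusion $S_l^i\subset W_l^i$; your formulation is if anything slightly tidier, since it also makes the reverse inclusion $\bigoplus_i(W_l^i\ominus S_l^i)\subset W_l\ominus S_l$ explicit.
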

\begin{proof}
For $\xi \in W_l\ominus S_l$, let $\xi =\xi _0 +\xi_1 +\xi _2 \in W_l^0 \oplus W_l^1\oplus W_l^2$ be the orthogonal decomposition.
Since  any term of $\chi _1\xi _0$ is not canceled by any term of $\chi _1(\xi _1 +\xi _2)$ and since $\xi$ is orthogonal to $S_l$, $\chi _1\xi _0$ has no term with length less than $l$.
Similarly, the vector $\xi _0\chi_1 $ has no term with length less than $l$.
Hence the vector $\xi _0 $ is orthogonal to $S_l^0$.
Similarly, we have $\xi _1 \in (S_l^1)^\perp$ and $\xi _2 \in (S_l^2)^\perp$.
\end{proof}
For $r,s \in \mathbf{Z}_{\geq 0}$ and $\xi \in W_l$, set
\[ \xi _{r,s} :=q_{l+r+s}(\chi _r\xi \chi _s).\]
When $r<0$ or $s<0$, we define $\xi _{r,s}$ as $0$.
We would like to see what relations there are among these vectors.
The following lemma is one of them.
\begin{lemm}
\label{3-3}
For $\xi \in W_l$ with $l\geq 1$, we have
\[ \chi _1 \xi _{r,s}=\xi _{r+1,s} +3\xi _{r-1,s}\]
for $r\geq 1$, $s\geq 0$,
\[ \xi _{r,s}\chi _1 =\xi _{r,s+1}+3\xi _{r,s-1}\]
for $r\geq  0$, $s\geq 1$.
\end{lemm}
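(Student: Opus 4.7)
My plan is to prove only the first identity
\[ \chi_1 \xi_{r,s} = \xi_{r+1,s} + 3\xi_{r-1,s}, \qquad r \ge 1,\ s \ge 0, \]
since the second is the mirror image of the same argument carried out on the right. The inputs are the two operator recursions $\chi_1\chi_r = \chi_{r+1} + 3\chi_{r-1}$ for $r \ge 2$ and $\chi_1^2 = \chi_2 + 4$ recalled in the paragraph preceding the statement.

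First I would record a parity/length observation: since every reduction in $M$ cancels a pair of adjacent inverse letters at a time, $\chi_r\xi\chi_s$ has components only in those $W_k$ with $k \le l+r+s$ and $k \equiv l+r+s \pmod{2}$; in particular $\chi_1 W_k \subseteq W_{k+1}\oplus W_{k-1}$, and therefore $\chi_1\xi_{r,s}\in W_{l+r+s+1}\oplus W_{l+r+s-1}$. Hence it suffices to compute the two orthogonal projections.

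For the top projection, write $\chi_r\xi\chi_s = \xi_{r,s} + \eta$, where $\eta$ collects the components of length at most $l+r+s-2$, and multiply the recursion on the right by $\xi\chi_s$ (for $r \ge 2$) to obtain $\chi_1\xi_{r,s} + \chi_1\eta = \chi_{r+1}\xi\chi_s + 3\chi_{r-1}\xi\chi_s$. Projecting to $W_{l+r+s+1}$: both $\chi_1\eta$ and $\chi_{r-1}\xi\chi_s$ have length at most $l+r+s-1$ and vanish there, leaving $q_{l+r+s+1}(\chi_1\xi_{r,s}) = \xi_{r+1,s}$.

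The bottom projection is the heart of the proof and where I expect the main difficulty. Projecting the same identity to $W_{l+r+s-1}$ reduces the claim to showing that the next-to-top component of $\chi_{r+1}\xi\chi_s$ exactly matches the top-length contribution of $\chi_1\eta$. I would prove this by induction on $r$: reapplying the recursion in the form $\chi_{r+1} = \chi_1\chi_r - 3\chi_{r-1}$ passes from level $r$ to level $r-1$, and the base case $r = 1$ is handled directly through $\chi_1^2 = \chi_2 + 4$, with the discrepancy between the coefficient $4$ appearing there and the coefficient $3$ in the claimed identity being absorbed by $\chi_1$ applied to the length-$(l-1)$ part of $\chi_1\xi$. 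The main obstacle is precisely this bookkeeping of lower-length contributions, which must cancel so that only the coefficient $3$ from the recursion survives in the final answer.
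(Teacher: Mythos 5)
Your treatment of the top component is fine, but the bottom component --- which you correctly identify as the heart of the lemma --- is not actually proved: you reduce it to the claim that $q_{l+r+s-1}(\chi_{r+1}\xi\chi_s)$ coincides with $q_{l+r+s-1}(\chi_1\eta)$ and then gesture at an induction whose inductive step and base case are never carried out. This is not merely a presentational gap but a structural one: the missing identity cannot be extracted from the recursions $\chi_1\chi_r=\chi_{r+1}+3\chi_{r-1}$ and $\chi_1^2=\chi_2+4$ alone, because those recursions carry no information about how the letters of $\chi_1$ interact with the letters of $\xi$, and the coefficient of $\xi_{r-1,s}$ genuinely depends on that interaction. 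Concretely, take $\xi=v_1\in W_1$, $r=1$, $s=0$. Then $\xi_{1,0}=\chi_1 v_1$ (no cancellation occurs), so
\[
\chi_1\xi_{1,0}=\chi_1^2v_1=\chi_2v_1+4v_1=\xi_{2,0}+4\,\xi_{0,0},
\]
and the subleading coefficient is $4$, not $3$; your proposed ``absorption'' of the discrepancy by $\chi_1$ applied to the lower-length part of $\chi_1\xi$ is vacuous here, since that lower-length part is zero. The coefficient $3$ is correct precisely when the relevant words of length $r-1+l+s$ begin with a letter $u_i^{\pm1}$ (automatic for $r\geq 2$; for $r=1$ it requires the words of $\xi$ not to begin with a pure $v$-syllable), and no argument that sees $\xi$ only through the operator recursions can detect this distinction.

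The argument the paper invokes (Radulescu's Lemma 1\,(a)) works at the level of words, and that is where the factor $3$ actually comes from. One writes $\xi_{r,s}$ as a linear combination of completely reduced words $awb$ with $a\in\tilde{W}_r^0$, $b\in\tilde{W}_s^0$ and $|awb|=l+r+s$. Since $r\geq1$, such a word begins with a letter of $a$; of the four letters $x\in\{u_1^{\pm1},u_2^{\pm1}\}$, exactly three satisfy $|xa|=r+1$ (and then $|xawb|=l+r+s+1$), while exactly one cancels the first letter of $a$, producing $a'wb$ with $|a'|=r-1$. Summing the non-cancelling terms over all $a$ reproduces each admissible word of length $r+1$ exactly once, which gives $\xi_{r+1,s}$; for the cancelling terms one must verify that each admissible $a'$ of length $r-1$ arises from exactly three choices of $a$, which yields the coefficient $3$. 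It is this last counting step --- not any manipulation of the Chebyshev-type recursion --- that constitutes the content of the lemma, and it is exactly the step that fails in the example above. To repair your proof you would have to import this letter-level count, at which point the detour through the recursions buys nothing over the direct argument.
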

\begin{proof}
This is shown by the same argument as that of the proof of Lemma 1 (a) of Radulescu \cite{Rad}.
\end{proof}

\subsection{Structures of the space $\biggl( \bigoplus _{l}W_l^2\biggr) \oplus \biggl( \bigoplus _lW_l^0 \biggr)$}
In this subsection, we present some formulas on vectors of  the space $\biggl( \bigoplus _{l}W_l^2\biggr) \oplus \biggl( \bigoplus _lW_l^0 \biggr)$.
\begin{lemm}
\label{3-4}
For $\xi \in (W_l^1\oplus W_l ^2)\ominus S_l$ which is orthogonal to the set $\{ u_i^{\pm 1}v_i^{\pm (l-1)} \mid i=1,2\} \cup \{ v_i^{\pm l}\}$, we have 
\[ \chi _1 \xi _{0,s} =\xi _{1,s}\]
for $s\geq 0$,
\[ \xi _{r,0}\chi _1 =\xi _{r,1}\]
for $r\geq 0$.
\end{lemm}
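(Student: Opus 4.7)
The plan is to prove both identities simultaneously by induction on $s$ (resp.\ $r$), reducing to the base case $s=1$. The case $s=0$ is immediate: since $\xi\in W_l\ominus S_l$, the discussion preceding Lemma \ref{3-2} gives $\chi_1\xi\in W_{l+1}$, so $\chi_1\xi_{0,0}=\chi_1\xi=\xi_{1,0}$, and symmetrically $\xi\chi_1=\xi_{0,1}$. For the inductive step $s\geq 1$, the recursion $\xi_{0,s+1}=\xi_{0,s}\chi_1-3\xi_{0,s-1}$ from Lemma \ref{3-3}, the inductive hypothesis, and Lemma \ref{3-3} applied to $\xi_{1,s}\chi_1$ combine to give
\[ \chi_1\xi_{0,s+1}=(\chi_1\xi_{0,s})\chi_1-3\chi_1\xi_{0,s-1}=\xi_{1,s}\chi_1-3\xi_{1,s-1}=\xi_{1,s+1}. \]
The companion identity $\xi_{r,0}\chi_1=\xi_{r,1}$ follows by the left-right symmetric induction.

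The heart of the proof is thus to show $\chi_1\xi\chi_1=\xi_{1,1}$, equivalently $q_l(\chi_1\xi\chi_1)=0$; the $W_{l-2}$-component will vanish as a byproduct of the same argument. Writing $\xi=\sum_w c_w w$ over completely reduced words of length $l$, I would split $\chi_1 w=A(w)+B(w)$ with $A(w):=q_{l+1}(\chi_1 w)$ and $B(w):=q_{l-1}(\chi_1 w)$, so that $q_l(\chi_1 w\chi_1)=q_l(A(w)\chi_1)+q_l(B(w)\chi_1)$. The sum over $w$ of the second term vanishes because $\sum_w c_w B(w)=q_{l-1}(\chi_1\xi)=0$ by $\xi\perp S_l$, leaving $q_l(\chi_1\xi\chi_1)=\sum_w c_w q_l(A(w)\chi_1)$ to control.

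The main obstacle is this residual sum, and the extra orthogonality hypotheses exist precisely to control it. For any $w$ with at least two blocks, every word appearing in $A(w)$ has the same last block as $w$, so a single right-reducing element $e(w)\in\{u_1^{\pm 1},u_2^{\pm 1},0\}$, determined by that last block, computes $q_l(A(w)\chi_1)=A(w)e(w)$; writing $\chi_1=e(w)+(\chi_1-e(w))$ and using $\sum_w c_w w\,e(w)=q_{l-1}(\xi\chi_1)=0$ then forces $\sum_w c_w q_l(A(w)\chi_1)$ into $W_{l-2}$, and since it already lies in $W_l$ by construction it must vanish. The obstruction comes from \emph{single-block} $w$, where the first and last blocks coincide and left multiplication by $\chi_1$ can alter the last block: for $w=v_i^{\pm l}$ the product $u_i^{\epsilon}v_i^{\pm l}$ is still single-block with a newly nonzero $u$-part, so a fresh right-reduction appears, and a direct calculation using $u_iv_i=dv_iu_i$ yields $q_l(\chi_1v_i^l\chi_1)=(d^l+d^{-l})v_i^l\ne 0$ by condition (0) on $d$; for $w=u_i^{\pm 1}v_i^{\pm(l-1)}$, the left-reduction $B(w)=v_i^{\pm(l-1)}$ loses its $u$-part, so the right-reducer of $B(w)$ differs from $e(w)$ and an extra $W_l$ term arises. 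These offending words are exactly the families appearing in the orthogonality hypothesis, which forces $c_w=0$ on them and removes the obstruction.
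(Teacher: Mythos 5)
Your route is genuinely different from the paper's. The paper proves $\chi_1\xi_{0,s}=\xi_{1,s}$ for all $s$ simultaneously by a direct combinatorial argument: it writes $\xi_{0,s}=\sum_w\lambda_w\sum_{w'}ww'$, groups the words of $\xi$ by their tails, and shows (Claims 2, 3, 3$'$) that a letter $u_j^p$ cancels into $ww'$ exactly when it cancels into $w$, so that the vanishing of the short part of $\chi_1\xi_{0,s}$ is inherited tail-by-tail from $\xi\perp S_l$. You instead reduce everything to the single identity $\chi_1\xi\chi_1=\xi_{1,1}$ by running the recursions of Lemma \ref{3-3} backwards. Your analysis of that base case is sound: the splitting $\chi_1w=A(w)+B(w)$, the right-reducer $e(w)$, and the identity $\sum_wc_wA(w)e(w)=q_l\bigl(\chi_1\sum_wc_w\,we(w)\bigr)=0$ do work for the admissible words, and you isolate exactly the two offending families $v_i^{\pm l}$ and $u_i^{\pm1}v_i^{\pm(l-1)}$ that also drive the paper's Claims 3 and 3$'$. (A minor slip: $(d^l+d^{-l})\neq0$ does not follow from condition (0), since $d^{2l}=-1$ is not excluded; this only affects your remark on why the hypothesis is needed, not the proof.)

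The gap is in the inductive step. The coefficient $3$ in Lemma \ref{3-3} comes from Radulescu's free-group setting, where every nonempty word has exactly one cancelling letter on each side. Here a completely reduced word whose last (or first) block is a pure $v$-power, e.g. $w=v_1^au_2^{\pm1}v_1^b$ — a perfectly admissible constituent of $\xi\in(W_l^1\oplus W_l^2)\ominus S_l$ even after imposing the orthogonality hypothesis — has \emph{no} cancelling letter on that side: all four letters extend it, and one computes $w_{0,1}\chi_1=w\chi_1^2=w\chi_2+4w=w_{0,2}+4w_{0,0}$, not $w_{0,2}+3w_{0,0}$. So the recursion $\xi_{0,s+1}=\xi_{0,s}\chi_1-3\xi_{0,s-1}$ that you feed into the induction is not available at $s=1$ for general $\xi$ in the scope of the lemma, and the step from $s=1$ to $s=2$ acquires an error term $\sum_wc_w(n_w-3)B(w)$, where $n_w\in\{3,4\}$ is the number of non-cancelling right extensions of $w$. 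This term does in fact vanish — the words of $B(w)$ for $n_w=4$ end in a pure $v$-block while those for $n_w=3$ do not (the exceptions being exactly the excluded families), so $\sum_wc_wB(w)=0$ splits into two orthogonal sub-sums — but that argument is nowhere in your write-up, and the paper's proof never routes through Lemma \ref{3-3} and so never meets the problem. To close the gap, either restrict your appeal to Lemma \ref{3-3} to indices $\geq2$ and treat $s=2$ as an additional base case by hand, or insert the orthogonality-of-last-blocks argument just sketched.
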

\begin{proof}
Let
\[ \xi =\sum _{w\in \tilde{W}_l} \lambda _ww \]
be the decomposition along the orthonormal basis $\tilde{W}_l$.
We would like to show that $\langle \chi _1\xi _{0,s},w''\rangle =0$ for any $w''\in W_{k'}$ with $k'\leq l+s-1$.
In order to achieve this, we decompose $\xi $ into some pieces and compute the inner product of each component.
The word $w\in \tilde{W}_l$ may begin with $u_i^{\pm 1}$ or $v_j^{\pm 1}$.
We may consider these two kinds of terms separately.
In the following, we consider the case when $\lambda _w=0$ if $w$ begins with $v_j^{\pm 1}$ (the other case is obvious; $w\not =v_j^{\pm 1}$ implies that $q_l(\chi _1{w}_{0,s})=\chi _1{w}_{0,s}$).

\bigskip

\textbf{Claim 1.} For any word $\tilde{w}\in \tilde{W}_{l-1}$  and $w'' \in W_{k'}$ with $k'\leq l-1$, we have
\[ \langle \sum _{x\in \tilde{W}_{1}^0 \ \mathrm{with} \ x\tilde{w}\in \tilde{W}_l} \lambda _{x\tilde{w}} x\tilde{w} , q_l(\chi _1 w'')\rangle =0.\]

\textit{Proof of Claim 1.} Notice that for different $\tilde{w}$'s, the vectors $\chi _1x\tilde{w}$'s are mutually orthogonal.
 Thus no cancellation occurs among terms of $\chi _1 x \tilde{w}$ coming from different $\tilde{w}$'s.
On the other hand, by assumption, we have $\langle \xi, q_l(\chi _1w'')\rangle =0$.
Thus we get the conclusion of Claim 1.
\qed

\bigskip

Next, we show another necessary claim.

\bigskip

\textbf{Claim 2.} Let $\tilde{w}\in \tilde{W}_{l-1}$ and $x\in \tilde{W}_{1}^0$ with $x\tilde{w}\in \mathrm{W}_l$. 
Let $w'$ be a word of $W_s^0$ such that $x\tilde{w}$ does not cancel with $w'$, that is,  $|xww'|=|x|+|w|+|w'|$.
Then for any $j=1,2$, $p=\pm 1$, if we multiply $u_j^p$ from the left, it  cancel with $x\tilde{w}w'$ if and only if it cancel with $x\tilde{w}$.

\textit{Proof of Claim 2.} \textbf{Case 1.} When $x\tilde{w}\not =u_i^sv_i^{\pm (l-|s|)}$, then the word $u_j^p$ cannot commute with $x\tilde{w}$.
Hence for the word $u_j^p$, in order to cancel with $x\tilde{w}w'$, it should cancel with $x\tilde{w}$.
Conversely, if the word $u_j^p$ cancel with $x\tilde{w}$, the length of $u_j^px\tilde{w}w'$ should be strictly less than $1+l+s$.
Thus the word $u_j^p$ cancels with $x\tilde{w}w'$.

\textbf{Case 2.} When $x\tilde{w}=u_i^sv_i^{\pm (l-|s|)}$ ($s\not =0$), then this is directly checked by using $s\not =0$.
\qed

\bigskip

We also need to show the following two claims.

\textbf{Claim 3.} Let $\tilde{w}\in \tilde{W}_{l-1}$ be a word orthogonal to $v_j^{\pm (l-1)}$ ($j=1,2$) and $x\in \tilde{W}_{1}^0$ with $x\tilde{w}\in \mathrm{W}_l$. 
Then for any reduced word $w'$, we have $|x\tilde{w}w'|=|x|+|\tilde{w}|+|w'|$ if and only if $|\tilde {w}w'|=|\tilde{w}|+|w'|$.
(In this claim, the assumption that the vector $\xi$ is orthogonal to $u_i^{\pm 1}v_i^{\pm (l-1)}$, $v_i^{\pm l}$ is used).

\textit{Proof of Claim 3.} This is shown by the same way as that of the proof of Claim 2.
\qed

\bigskip 

\textbf{Claim 3'.} Let $x$ be $u_{3-j}^{\pm 1}$ and $\tilde{w} $ be $v_{j}^{\pm (l-1)}$.
Then the same conclusion as that of Claim 3 holds.

\textit{Proof of Claim 3'.} This is trivial.
\qed

By Claims 1, 2, 3 and 3', for any $w' \in \tilde{W}_s^0$ with $| \tilde{w}w'| =l+s-1$, we have
\[ \langle \sum _{x\in \tilde{W}_{1}^0 \ \mathrm{with} \ x\tilde{w}w'\in \tilde{W}_{l+s}} \lambda _{x\tilde{w}}x\tilde{w}w' , q_{l+s}(\chi _1 w'') \rangle =0\]
for any $w'' \in W_{k'}$ with $k' \leq l+s-1$.
When $\tilde{w}$ and $w'$ run over all possible values, the sum of the above equality is
\begin{align*}
 0&=\langle \sum _{w\in \tilde{W}_l} \biggl( \sum _{w' \in \tilde{W}_s^0, \ |ww'| =l+s}\lambda _ww w' \biggr) , q_{l+s}(\chi _1 w'')\rangle \\
   &=\langle \xi _{0,s}, q_{l+s}(\chi _1w'') \rangle \\
   &=\langle \chi _1 \xi _{0,s} , w''\rangle 
\end{align*}
for all $w'' \in W^2_{k'}$ with $k' \leq l+s-1$.
Hence $\chi _1 \xi _{0,s}$ has no term with length less than $l+s-1$.
Thus we have $\xi _{1,s}=q_{l+s+1}(\chi _1\xi _{0,s})=\chi _1\xi _{0,s}$.
The other equality is shown in the same way.
\end{proof}

\begin{lemm}
\label{3-5}
For $\xi \in W_l^0\ominus S_l^0$, we have the following statements.

\textup{(1)} When $l\geq 2$, we have
\[ \chi _1 \xi _{0,s} =\xi _{1,s}\]
for $s\geq 0$,
\[ \xi _{r,0}\chi _1 =\xi _{r,1}\]
for $r\geq 0$.

\textup{(2)} When $l=1$ and $\xi$ is $c_1(u_1+\epsilon u_1^{-1}) +c_2(u_2+\epsilon u_2^{-1})$ for some $\epsilon \in \{ \pm 1\}$ and $c_1,c_2\in \mathbf{C}$, we have
\[ \chi _1 \xi _{0,s}=\xi _{1,s}-\epsilon \xi _{0,s-1}\]
for $s \geq 0$,
\[ \xi _{r,0}\chi _1 =\xi _{r,1}-\epsilon \xi _{r-1,0}\]
for $r \geq 0$.
\end{lemm}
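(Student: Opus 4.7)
My plan is to adapt the combinatorial approach of the proof of Lemma~\ref{3-4} to the $W_l^0$ setting, where (since there are no $v$-letters) the argument should become cleaner once $l\geq 2$. I would introduce a ``left-strip'' map: for $w\in\tilde{W}_k^0$ with $k\geq 1$, decompose $w=x\tilde{w}$ with $x$ the leftmost $u_i^{\pm 1}$ letter of $w$ and $\tilde{w}\in\tilde{W}_{k-1}^0$ (the empty word when $k=1$); a direct check shows $q_{k-1}(\chi_1 w)=\tilde{w}$. By the characterization of $S_l^0$ recalled just before Lemma~\ref{3-2}, the hypothesis $\xi=\sum_w\lambda_w w\in W_l^0\ominus S_l^0$ thus becomes $\sum_{x:\,x\tilde{v}\in\tilde{W}_l^0}\lambda_{x\tilde{v}}=0$ for every $\tilde{v}\in\tilde{W}_{l-1}^0$, together with the symmetric right-side condition, which will be used for the right half of the lemma.

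\textbf{Part (1), $l\geq 2$.} Since $\chi_1\xi_{0,s}$ can only have components in degrees $l+s\pm 1$, it suffices to prove $q_{l+s-1}(\chi_1\xi_{0,s})=0$. The key combinatorial observation is that for $w=x\tilde{w}\in\tilde{W}_l^0$ with $\tilde{w}$ non-empty and $y\in\tilde{W}_s^0$, the junction interaction of $w$ with $y$ depends only on the rightmost block of $w$, which coincides with that of $\tilde{w}$; hence $|wy|=l+s$ iff $|\tilde{w}y|=l-1+s$, and in either case $q_{l+s-1}(\chi_1\cdot wy)=\tilde{w}y$. Summing over admissible $y$ yields $q_{l+s-1}(\chi_1\, q_{l+s}(w\chi_s))=q_{l-1+s}(\tilde{w}\chi_s)$, whence
\begin{align*}
q_{l+s-1}(\chi_1\xi_{0,s}) &= \sum_w \lambda_w\, q_{l-1+s}(\tilde{w}\chi_s)\\
&= \sum_{\tilde{v}\in\tilde{W}_{l-1}^0}\Biggl(\sum_{x:\,x\tilde{v}\in\tilde{W}_l^0}\lambda_{x\tilde{v}}\Biggr)\,q_{l-1+s}(\tilde{v}\chi_s),
\end{align*}
and each inner sum vanishes by the left-side orthogonality. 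The identity $\xi_{r,0}\chi_1=\xi_{r,1}$ is the mirror image, using the right-strip map and the right-side orthogonality.

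\textbf{Part (2), $l=1$.} Here $w=x$ is a single letter, $\tilde{w}$ is empty, and the above biconditional fails: I must track the exceptional set $B_x:=\{y\in\tilde{W}_s^0:|xy|<1+s\}$, which consists precisely of the $y$'s whose first block starts with $x^{-1}$. The analogous direct calculation then gives $q_s(\chi_1\, q_{1+s}(x\chi_s))=\chi_s-B_x$, so that under the orthogonality $\sum_x\lambda_x=0$ the $\chi_s$ contributions cancel and we are left with $q_s(\chi_1\xi_{0,s})=-\sum_x\lambda_x B_x$. The crucial combinatorial identity I would then establish is $B_{u_i}=q_s(u_i^{-1}\chi_{s-1})$ and $B_{u_i^{-1}}=q_s(u_i\chi_{s-1})$: both sides consist of exactly the length-$s$ reduced words in $u$'s that start with the appropriate $u_i^{\mp}$. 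Substituting into the sum and exploiting $\lambda_{u_i^{-1}}=\epsilon\lambda_{u_i}$ (the defining symmetry of the allowed $\xi$) collapses the right-hand side to $\epsilon\xi_{0,s-1}$, giving $\chi_1\xi_{0,s}=\xi_{1,s}-\epsilon\xi_{0,s-1}$. The right-side identity follows again by symmetry.

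\textbf{Main obstacle.} The delicate part is the $l=1$ case: the failure of the junction biconditional forces one to track $B_x$ explicitly, recognize it as $q_s(x^{-1}\chi_{s-1})$, and re-group the resulting sum into $\epsilon\xi_{0,s-1}$. This final re-grouping exploits precisely the symmetry $\lambda_{x^{-1}}=\epsilon\lambda_x$ imposed on $\xi$ in the statement; without it no such clean correction could appear, which is presumably why the lemma singles out this specific form. The $l\geq 2$ case is combinatorially straightforward by comparison.
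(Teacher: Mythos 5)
Your proof is correct and follows essentially the route the paper intends: the paper's own proof of this lemma is a one-line deferral to Lemma 1 (b)(c) of Radulescu \cite{Rad}, whose argument is exactly this junction-letter bookkeeping, and you have supplied the details (the reformulation of $\xi \perp S_l^0$ as the vanishing of the left/right letter-sums $\sum_x \lambda_{x\tilde{v}}$, the $l\geq 2$ junction biconditional via the strip map, and the identification $B_x = q_s(x^{-1}\chi_{s-1})$ combined with the symmetry $\lambda_{x^{-1}}=\epsilon\lambda_x$ to produce the correction term). The only nitpick is a sign in the prose of Part (2) — the quantity $-\sum_x\lambda_x B_x$ collapses to $-\epsilon\xi_{0,s-1}$, not $+\epsilon\xi_{0,s-1}$ — but your final displayed identity $\chi_1\xi_{0,s}=\xi_{1,s}-\epsilon\xi_{0,s-1}$ already has the correct sign.
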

\begin{proof}
This is shown by the same argument as that of the proof of Lemma 1 (b)(c) of Radulescu \cite{Rad}.
\end{proof}

\subsection{Structures of the space  $\biggl( \bigoplus _lW_l^1\biggr)$}
In this subsection, we investigate the structure of the space $\bigoplus _{l}W_l^1$.
Fix $l\in \mathbf{Z}\setminus \{ 0\}$.
When $l\not =\pm 1$, set 
\[ \gamma ^{i,l}_{1,\pm }:=v_i^{l-\mathrm{sgn}(l)} u_i^{\pm 1},\]
\[ \gamma ^{i,l}_2 :=v_i^{l-\mathrm{sgn}(l)}(u_{3-i}+u_{3-i}^{-1}),\]
\[ \gamma ^{i,l}_3:=(u_{3-i}+u_{3-i}^{-1}) v_i^{l-\mathrm{sgn}(l)},\]
\[ \gamma ^i_l:=\frac{2}{d^{-(l-\mathrm{sgn}(l))}-d^{l-\mathrm{sgn}(l)}}(\gamma ^{i,l}_{1,+}-\gamma ^{i,l}_{1,-}) -\gamma ^{i,l}_3\]
and
\[\overline{\gamma } ^i_l:=\frac{2}{d^{l-\mathrm{sgn}(l)}-d^{-(l-\mathrm{sgn}(l))}}( d^{l-\mathrm{sgn}(l)}\gamma ^{i,l}_{1,+}-d^{-(l-\mathrm{sgn}(l))}\gamma ^{i,l}_{1,-}) -\gamma ^{i,l}_2.\]
For $l>0$, we also set
\[ W_{l}^{1, \alpha}:=\mathrm{span} \{ u_i^{\pm 1}v_j^{\pm (l-1)}, \ v_j^{\pm (l-1)}u_i^{\pm 1} \mid i=1,2, \ j=1,2\},\]
\[ W_l^{1, \beta}:=W_l^1\ominus (W_l^{1, \alpha }\oplus \mathbf{C}v_1^{\pm l }\oplus \mathbf{C}v_2^{\pm l}),\]
\[ W_l^{1,\alpha ,1}:=\mathrm{span}\{ \gamma ^i_{\pm l}, \ \overline{\gamma }^i_{\pm l} \mid i=1,2\} \]
when $l\not = 1$ and
\[ W_l^{1,\alpha ,2}:=\mathrm{span}\{ (u_{3-i}-u_{3-i}^{-1})v_i^{\pm (l-1)}, \ v_i^{\pm (l-1)}(u_{3-i}-u_{3-i}^{-1})\mid i=1,2\} \]
when $l \not =1$.
When $l=1$, we set $W_l^{1,\alpha ,1}=W_l^{1, \alpha , 2}=0$.
Obviously, the space $W_l^{1, \beta }$ is generated by the completely reduced words of the form $xv_i^{k'}x'\in \tilde{W}_l^1$ with $x,x'\in \bigcup_{k\geq 0} \tilde{W}_k^0, |x|+|x'|\geq 2, k' \not =0, i=1,2$.
\begin{lemm}
\label{7-4}
For any $l\in \mathbf{Z}_{>0}$, we have
\[ W_l^1\ominus S_l^1=(W_l^{1, \beta }\ominus S_l^1)\oplus W_l^{1, \alpha , 1}\oplus W_l^{1, \alpha ,2}\oplus \mathbf{C}v_1^{\pm l} \oplus \mathbf{C}v_2^{\pm l}.\]
\end{lemm}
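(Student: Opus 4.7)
The plan is to split $S_l^1$ compatibly with the orthogonal decomposition of $W_l^1$ and then reduce the statement to a finite-dimensional check inside $W_l^{1,\alpha}$. First I record the orthogonal splitting $W_l^1 = W_l^{1,\alpha} \oplus W_l^{1,\beta} \oplus \mathbf{C}v_1^{\pm l} \oplus \mathbf{C}v_2^{\pm l}$, which is immediate from classifying basis words in $\tilde{W}_l^1$. I also check that $W_l^{1,\alpha,1}, W_l^{1,\alpha,2} \subset W_l^{1,\alpha}$ are mutually orthogonal, the former featuring only symmetric combinations $(u_{3-i}+u_{3-i}^{-1})v_i^{l-1}$ (or the reversal), the latter the antisymmetric $(u_{3-i}-u_{3-i}^{-1})v_i^{l-1}$.

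The structural observation that makes everything work is: for $w \in W_{l-1}^1$, the vectors $q_l(\chi_1 w)$ and $q_l(w\chi_1)$ have no $v^{\pm l}$-component, and they have a nonzero $\alpha$-component only when $w = v_j^{\pm(l-1)}$. The reason is that an $\alpha$-form word contains a $v$-block of length $l-1$, and a completely reduced word of length $l-1$ in $W^1$ with such a block must itself equal $v_j^{\pm(l-1)}$. Left or right multiplication by $\chi_1$ only adjoins a $u$-letter, so it cannot extend an existing $v$-block, nor kill off the entire $u$-content. Consequently $S_l^1 = S_l^{1,\alpha} \oplus S_l^{1,\beta}$, with $S_l^{1,\alpha} = \mathrm{span}\{\chi_1 v_j^{\pm(l-1)},\; v_j^{\pm(l-1)}\chi_1 : j=1,2\} \subset W_l^{1,\alpha}$ and $S_l^{1,\beta} \subset W_l^{1,\beta}$, and $\mathbf{C}v_i^{\pm l} \perp S_l^1$.

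This reduces the lemma to the identity $W_l^{1,\alpha} \ominus S_l^{1,\alpha} = W_l^{1,\alpha,1} \oplus W_l^{1,\alpha,2}$. For $l \geq 2$ this is a dimension count in the $24$-dimensional space $W_l^{1,\alpha}$: the eight generators of $S_l^{1,\alpha}$ are linearly independent by separating supports on $u_{3-j}^{\pm 1}v_j^{\pm(l-1)}$ versus $v_j^{\pm(l-1)}u_{3-j}^{\pm 1}$ basis terms, and the sixteen generators of $W_l^{1,\alpha,1} \oplus W_l^{1,\alpha,2}$ are similarly independent. Since $8 + 16 = 24$, the equality follows once I verify the orthogonality $W_l^{1,\alpha,1} \oplus W_l^{1,\alpha,2} \perp S_l^{1,\alpha}$. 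The case $l=1$ is degenerate: $W_l^{1,\alpha,1} = W_l^{1,\alpha,2} = 0$ by convention, and the identity becomes the trivial $W_1^1 = \mathbf{C}v_1^{\pm 1} \oplus \mathbf{C}v_2^{\pm 1}$.

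The hard part is this orthogonality check for $W_l^{1,\alpha,1}$; the $W_l^{1,\alpha,2}$ case is a short parity argument, since the antisymmetric $(u_{3-i}-u_{3-i}^{-1})$ is orthogonal to the symmetric $(u_{3-i}+u_{3-i}^{-1})$ appearing in $\chi_1 v_j^{\pm(l-1)}$, while reversed ordering handles $v_j^{\pm(l-1)}\chi_1$. For $\gamma^i_{\pm l}$ and $\overline{\gamma}^i_{\pm l}$, the commutation relation $u_i v_i = d v_i u_i$ gives $v_i^{l-1}u_i^{\pm 1} = d^{\mp(l-1)}u_i^{\pm 1}v_i^{l-1}$, so the basis expansions of $v_i^{l-1}\chi_1$ and $\chi_1 v_i^{l-1}$ carry the phases $d^{\pm(l-1)}$. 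The coefficients $2/(d^{-(l-1)} - d^{l-1})$ in the definitions of $\gamma^i_l$ and $\overline{\gamma}^i_l$ are exactly tuned so that these inner products cancel against the $u_i^{\pm 1}v_i^{l-1}$ and $(u_{3-i}\pm u_{3-i}^{-1})v_i^{l-1}$ pieces; the genericity condition $d^n \neq 1$ for $n \neq 0$ guarantees that the denominator does not vanish.
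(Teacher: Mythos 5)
Your argument is correct and follows essentially the same route as the paper: both reduce the statement to finite-dimensional linear algebra inside $W_l^{1,\alpha}$ after observing that $S_l^1$ is compatible with the $\alpha/\beta$ splitting, and both rest on the same dimension count together with the observation that the coefficients of $\gamma^i_{\pm l}$, $\overline{\gamma}^i_{\pm l}$ are tuned (via $u_iv_i=dv_iu_i$ and $d^{2(l-1)}\neq 1$) to kill the inner products against $q_l(\chi_1 v_j^{\pm(l-1)})$ and $q_l(v_j^{\pm(l-1)}\chi_1)$. The paper phrases the count blockwise (two linear equations in six unknowns for each fixed $v_j^{\pm(l-1)}$) where you do it globally ($24=8+16$), and you are somewhat more explicit about why only $w=v_j^{\pm(l-1)}$ contributes $\alpha$-components to $S_l^1$, but these are presentational differences only.
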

\begin{proof}
First, note that we have $W_l^1\ominus S^1_l=(W^{1,\alpha }_l\ominus S^1_l)\oplus (W^{1, \beta }_l\ominus S^1_l)$.
Thus in order to get the conclusion, it is enough to determine the structure of the space $W^{1, \alpha }\ominus S^1_l$.

\bigskip

\textbf{Claim}
For $l\in \mathbf{Z}\setminus \{ 0, \pm 1\}$, let  $\xi \in W_{|l|}^1\ominus S_{|l|}^1$ be a linear combination of $v_1^{l-\mathrm{sgn}(l)}u_i^t, u_i^{t'}v_1^{l-\mathrm{sgn} (l)}$ ($i,i'=1,2$, $t,t'=\pm 1$).
Then $\xi $ is a linear combination of $\gamma _l^1$, $\overline{\gamma }_l^1 $, $(u_2-u_2^{-1})v_1^{l-\mathrm{sgn}(l)}$ and $v_1^{l-\mathrm{sgn}(l)}(u_2-u_2^{-1})$.

\textit{Proof of Claim.} 
For simplicity, we assume that $l>0$ (When $l<0$, Claim is shown in the same way).
It is possible to write $\xi$ as 
\[ c_{1,+}v_1^{l-1}u_1+c_{1,-}v_1^{l-1}u_1^{-1}+c_{2,+}v_1^{l-1}u_2+c_{2,-}v_1^{l-1}u_2^{-1}+c_{3,+}u_2v_1^{l-1}+c_{3,-}u_2^{-1}v_1^{l-1}.\]
Then since $\xi $ is orthogonal to $\chi _1w$ with $|w|\leq l-1$, we have
\[ d^{-(l-1)}c_{1,+}+d^{l-1}c_{1,-}+c_{3,+}+c_{3,-}=0.\]
Since $\xi $ is orthogonal to $w\chi _1$ with $|w|\leq l-1$, we have
\[ c_{1,+}+c_{1,-}+c_{2,+}+c_{2,-}=0.\]
Obviously, the vectors $\gamma _l^1$, $\overline{\gamma }_l^1$ , $(u_2-u_2^{-1})v_1^{l-1}$ and $v_1^{l-1}(u_2-u_2^{-1})$ satisfy the above two equalities.
On the other hand, the vector space defined by these two equalities is at most four dimensional.
Thus $\xi $  is a linear combination of $\gamma _l^1$, $\overline{\gamma }_l^1$, $(u_2-u_2^{-1})v_1^{l-1}$ and $v_1^{l-1}(u_2-u_2^{-1})$.
\qed

\bigskip

Thus $\xi \in W_l^1\ominus S_l^1$ which is a linear combination of $v_j^{\pm l}$, $v_{j'}^{\pm (l-1)}u_{i'}^{t'}$, $u_{i''}^{t''}v_{j''}^{\pm (l-1)}$ ($j,j',j'',i',i''=1,2$, $t',t''=\pm 1$) is in fact contained in $W_l^{1, \alpha ,1}\oplus W_l^{1, \alpha ,2} \oplus \mathbf{C}v_1^{\pm l}\oplus \mathbf{C} v_2^{\pm l}$.
Hence we have
\[ W_l^{1, \alpha }\ominus S_l^1=W_l^{1, \alpha , 1}\oplus W_l^{1, \alpha ,2} \oplus \mathbf{C}v_1^{\pm 1} \oplus \mathbf{C}v_2^{\pm 1},\]
which implies the conclusion of the lemma.
\end{proof}

\begin{lemm}
\label{8}
For any $n,m \geq 0$, $i=1,2$, we have
\[ \chi _n\gamma ^{i,l}_j\chi _m \in \mathrm{span} \{ (\gamma ^{i,l}_j)_{r,s} \mid j=(1, \pm ), 2,3, \ r,s \geq 0\}\vee \mathbf{C} v_i^{l-1}.\]
\end{lemm}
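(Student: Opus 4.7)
The plan is to show that the vector space
$$V := \mathrm{span}\bigl\{(\gamma^{i,l}_{j'})_{r,s} : j'\in\{(1,\pm),2,3\},\ r,s\geq 0\bigr\} \vee \mathbf{C}\, v_i^{l-1}$$
is invariant under both left and right multiplication by $\chi_1$.  Given this, since each $\chi_r$ is a polynomial in $\chi_1$ (via the recursion $\chi_l\chi_1=\chi_{l+1}+3\chi_{l-1}$ and $\chi_1^2=\chi_2+4$ recalled just before Lemma~\ref{3-3}) and $\gamma^{i,l}_j=(\gamma^{i,l}_j)_{0,0}\in V$, the invariance immediately yields $\chi_n\gamma^{i,l}_j\chi_m\in V$ for all $n,m\geq 0$.

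I verify closure under left multiplication by $\chi_1$ on the three types of generators.  For $(\gamma^{i,l}_{j'})_{r,s}$ with $r\geq 1$, Lemma~\ref{3-3} gives $\chi_1(\gamma^{i,l}_{j'})_{r,s}=(\gamma^{i,l}_{j'})_{r+1,s}+3(\gamma^{i,l}_{j'})_{r-1,s}\in V$ directly.  A one-line calculation using $u_iv_i=dv_iu_i$ yields $\chi_1 v_i^{l-1}=d^{l-1}\gamma^{i,l}_{1,+}+d^{-(l-1)}\gamma^{i,l}_{1,-}+\gamma^{i,l}_3\in V$.  The substantive case is $\chi_1(\gamma^{i,l}_{j'})_{0,s}$ for $s\geq 0$, which I treat by case analysis on $j'$.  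Explicit expansion gives $\chi_1\gamma^{i,l}_{j'}=(\gamma^{i,l}_{j'})_{1,0}+c_{j'}v_i^{l-1}$ for a scalar $c_{j'}$ read off the computation (for instance $c_{(1,+)}=d^{-(l-1)}$ and $c_3=2$).  Comparing the two expansions
$$\chi_1\gamma^{i,l}_{j'}\chi_s = (\gamma^{i,l}_{j'})_{1,0}\chi_s + c_{j'}v_i^{l-1}\chi_s = \chi_1(\gamma^{i,l}_{j'})_{0,s} + \chi_1\cdot\bigl(\text{lower-length parts of }\gamma^{i,l}_{j'}\chi_s\bigr)$$
and solving for $\chi_1(\gamma^{i,l}_{j'})_{0,s}$ expresses it as $(\gamma^{i,l}_{j'})_{1,s}$ plus correction vectors of the form $v_i^{l-1}\chi_{s'}$ with $s'\leq s$.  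Right multiplication is handled symmetrically.

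The auxiliary identity ensuring each such correction lies in $V$ is
$$v_i^{l-1}\chi_s = (\gamma^{i,l}_{1,+})_{0,s-1} + (\gamma^{i,l}_{1,-})_{0,s-1} + (\gamma^{i,l}_2)_{0,s-1}\quad(s\geq 1),$$
obtained by partitioning $\tilde{W}_s^0$ according to the starting letter of each completely reduced word and applying $v_i^{l-1}u_i^{\pm 1}=d^{\mp(l-1)}u_i^{\pm 1}v_i^{l-1}$; the symmetric identity handles $\chi_s v_i^{l-1}$.  The main obstacle is the case-by-case bookkeeping of length-reducing cancellations for each $j'\in\{(1,\pm),2,3\}$, which requires careful tracking of products using $u_iv_i=dv_iu_i$ and the partition of $\tilde{W}_s^0$ by starting letter; once carried out, closure of $V$ under $\chi_1$-multiplication is established and the proof is complete.
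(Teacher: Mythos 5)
Your proposal is correct and follows essentially the same route as the paper: closure of the span under one-sided multiplication by $\chi_1$, with the boundary cases $(\gamma^{i,l}_{j'})_{0,s}$ and $(\gamma^{i,l}_{j'})_{r,0}$ computed explicitly and the correction terms absorbed via the identity $v_i^{l-1}\chi_s=(\gamma^{i,l}_{1,+})_{0,s-1}+(\gamma^{i,l}_{1,-})_{0,s-1}+(\gamma^{i,l}_2)_{0,s-1}$, then bootstrapping to $\chi_n\cdot\chi_m$ through the recursion for the $\chi_r$. The only cosmetic imprecision is that for $j'=(1,\pm)$ the correction also contains a term $(\gamma^{i,l}_{1,\mp})_{0,s-1}$, not just multiples of $v_i^{l-1}\chi_{s'}$, but that term already lies in your space $V$, so nothing is lost.
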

\begin{proof}
Set
\[ V_i:=\mathrm{span} \{ (\gamma _j^{i,l})_{r,s} \mid j=(1, \pm ), 2,3, \ r,s \geq 0\}.\]
Notice that we have $(v_i^{l-\mathrm{sgn}(l)})_{0,s}, (v_i^{l-\mathrm{sgn}(l)})_{r,0}\in V_i\vee \mathbf{C}v_i^{l-\mathrm{sgn}(l)}$.
For $s\geq 1$, we have 
\begin{align*}
\chi _1 (u_iv_i^{l-\mathrm{sgn}(l)})_{0,s} &= \chi _1 u_iv_i^{l-\mathrm{sgn}(l)}\biggl( \sum _{w\not =u_i^{-1}, \ |w|=s}w\biggr) \\
                                        &=(u_iv_i^{l-\mathrm{sgn}(l)})_{1,s}+v_i^{l-\mathrm{sgn}(l)}\chi _s-(v_i^{l-\mathrm{sgn}(l)}u_i^{-1})_{0,s-1},
\end{align*}
\begin{align*}
\chi _1 ((u_{3-i}+u_{3-i}^{-1})v_i^{l-\mathrm{sgn}(l)} )_{0,s} &= \chi _1(u_{3-i}+u_{3-i}^{-1})v_i^{l -\mathrm{sgn}(l)}\chi _s \\
                                                                 &=((u_{3-i}+u_{3-i}^{-1})v_i^{l-\mathrm{sgn}(l)})_{1,s} +2v_i^{l-\mathrm{sgn}(l)}\chi _s
\end{align*}
and 
\[ \chi _1(v_i^{l-\mathrm{sgn}(l)}(u_{3-i}+u_{3-i}^{-1}))_{0,s} =(v_i^{l-\mathrm{sgn}(l)}(u_{3-i}+u_{3-i}^{-1}))_{1,s}.\]
Thus we have $\chi _1 (\gamma ^{i,l}_j)_{0,s} \in V_i\vee Av_i^lA$.
Similarly, we have $(\gamma ^{i,l}_j) _{r,0}\chi _1 \in V_i\vee Av_i^lA$.
Hence by using Lemma \ref{3-3}, it is possible to conclude that $\chi _n \gamma _j^i \chi _m \in V_i \vee Av_i^lA$.
\end{proof}

\subsection{Constructing a Riesz basis of $L^2M\ominus A$}
\begin{lemm}
\label{3-6}
For $\xi \in (W_l^0\ominus S_l^0)\oplus ((W_l^{1,\alpha ,2 }\oplus W_l^{1,\beta} )\ominus S_l^1)\oplus (W_l^2\ominus S_l^2)$, we have the following statements.

\bigskip

\textup{(1)}
\textup{(i)}  For any $n,m \geq 0$ and $l\geq 2$, we have
\[ \chi _n \xi \chi _m =\xi _{n,m} -(\xi _{n,m-2}+\xi _{n-2,m})+\xi _{n-2,m-2}.\]
\textup{(ii)} For any $n,m\geq 0$ and $l\geq 2$, we have
\[ \xi _{n,m} =\sum _{r\leq n, s\leq m, \ (r,s) \ \mathrm{has} \ \mathrm{the} \ \mathrm{same} \ \mathrm{parity} \ \mathrm{as} \ \mathrm{that}\ \mathrm{of} \ (n,m)} \chi _r \xi \chi _s.\]
\textup{(2)}  When $l=1 $ and $\xi $ is of the form  $c_1(u_1+\epsilon u_1^{-1}) +c_2(u_2+\epsilon u_2^{-1})$ for some $\epsilon \in \{ \pm 1\}$ and $c_1,c_2\in \mathbf{C}$, for any $n,m \geq 0$, we have the following two statements.

\textup{(i)} We have
\begin{align*}
\chi _n \xi \chi _m=&\xi _{n,m}-(\xi _{n,m-2}+\xi _{n-2,m})+\xi _{n-2,m-2} \\
                                 &+\sum _{k\geq 2} (-\epsilon )^k (\epsilon \xi _{n-k-1,m-k+1} +\epsilon \xi _{n-k+1, m-k-1} +2\xi _{n-k,m-k}).
\end{align*}
\textup{(ii)} We have
\[ \xi _{n,m}=\sum _{r\leq n, s\leq m, \ r-s  \ \mathrm{has} \ \mathrm{the} \ \mathrm{same} \ \mathrm{parity} \ \mathrm{as} \ \mathrm{that}\ \mathrm{of} \ n-m} \epsilon ^{n-r}\chi _r \xi \chi _s.\]
\end{lemm}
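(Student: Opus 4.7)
The strategy is to build everything out of the three recurrences already established: Lemma \ref{3-3} governs the interior ($r\geq 1$ or $s\geq 1$), while Lemmas \ref{3-4} and \ref{3-5} handle the boundary $r=0$ or $s=0$. For any $\xi$ in the subspace appearing in the statement, Lemma \ref{3-4} applies to the $W_l^{1,\alpha,2}$, $W_l^{1,\beta}$ and $W_l^2$ components (all spanned by words orthogonal to $\{u_i^{\pm 1}v_i^{\pm (l-1)}\}\cup\{v_i^{\pm l}\}$), and Lemma \ref{3-5}(1) applies to the $W_l^0\ominus S_l^0$ component when $l\geq 2$. Both yield the \emph{clean} boundary formulas $\chi_1\xi_{0,s}=\xi_{1,s}$ and $\xi_{r,0}\chi_1=\xi_{r,1}$, which is why Part (1) takes such a simple shape. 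In the special case of Part (2) one has $l=1$, forcing the use of Lemma \ref{3-5}(2), whose $-\epsilon$ correction at the boundary is the source of the extra series.

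For (1)(i) I would factor the claim through two symmetric one-sided identities. First, by induction on $n$, I prove
\[\chi_n\xi=\xi_{n,0}-\xi_{n-2,0}.\]
The cases $n=0,1$ are immediate from the clean boundary; the inductive step uses $\chi_{n+1}=\chi_n\chi_1-3\chi_{n-1}$ (and $\chi_2=\chi_1^2-4$ at the base) together with Lemma \ref{3-3} applied to $\chi_1\xi_{n,0}$. A mirror induction on $m$ then gives
\[\xi_{r,0}\chi_m=\xi_{r,m}-\xi_{r,m-2}\]
for every $r\geq 0$. Combining the two identities yields (1)(i). Then (1)(ii) is purely formal: the right-hand side of (1)(i) is a discrete second difference in $(r,s)$ with step $2$, and summing over parity-matching pairs with $r\leq n$, $s\leq m$ telescopes (using $\xi_{r,s}=0$ for $r<0$ or $s<0$) to $\xi_{n,m}$.

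For (2), the same two-step scheme works, but one has to carry along the new correction from Lemma \ref{3-5}(2). The left-sided identity $\chi_n\xi=\xi_{n,0}-\xi_{n-2,0}$ survives verbatim, because in computing $\chi_n\xi$ we always have $s=0$ and the offending $-\epsilon\xi_{0,s-1}$ vanishes. In the right multiplication $\xi_{r,0}\chi_m$, however, each passage through the boundary spawns a $-\epsilon\xi_{r-1,\cdot}$ tail; tracking the cascade through the induction on $m$ produces the series in the statement. For (2)(ii), the weight $\epsilon^{n-r}$ is chosen precisely so that, after expanding every $\chi_r\xi\chi_s$ via (2)(i) and summing over the parity-compatible pairs, the cascade contributions cancel in pairs and only $\xi_{n,m}$ remains. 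The main obstacle is exactly this bookkeeping: one must check that the signs $(-\epsilon)^k$ combine coherently across the different levels of the cascade and that the weighted telescoping in (2)(ii) eliminates every residual correction. Part (1), by contrast, is essentially formal once the clean boundary is available.
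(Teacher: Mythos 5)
Your strategy coincides with the paper's (the paper simply invokes Lemmas \ref{3-3}, \ref{3-4} and \ref{3-5} and defers to Lemma 2 of Radulescu), and your treatment of Part (1) is correct: the two one-sided inductions giving $\chi_n\xi=\xi_{n,0}-\xi_{n-2,0}$ and $\xi_{r,0}\chi_m=\xi_{r,m}-\xi_{r,m-2}$ close up using $\chi_{n+1}=\chi_n\chi_1-3\chi_{n-1}$, $\chi_1^2=\chi_2+4$, the clean boundary identities of Lemmas \ref{3-4} and \ref{3-5}(1), and Lemma \ref{3-3} in the interior; the telescoping for (1)(ii) is then immediate.

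The gap is in Part (2), where you assert rather than verify that tracking the cascade ``produces the series in the statement.'' That bookkeeping is the entire content of (2)(i), and if you actually perform it you do not land on the displayed formula. The right-sided induction with the boundary rule $\xi_{a,0}\chi_1=\xi_{a,1}-\epsilon\xi_{a-1,0}$ closes up as
\[ \xi_{r,0}\chi_m=\sum_{j\geq 0}(-\epsilon)^j\bigl(\xi_{r-j,m-j}-\xi_{r-j,m-j-2}\bigr),\]
and combining this with the (correct) left identity gives
\[ \chi_n\xi\chi_m=\sum_{k\geq 0}(-\epsilon)^k\bigl(\xi_{n-k,m-k}-\xi_{n-k-2,m-k}-\xi_{n-k,m-k-2}+\xi_{n-k-2,m-k-2}\bigr).\]
Collected by position, the diagonal coefficients are $1,\ -\epsilon,\ 2(-\epsilon)^2,\ 2(-\epsilon)^3,\dots$, whereas the display in (2)(i) contains no $\xi_{n-1,m-1}$ term at all and carries total coefficient $1+2(-\epsilon)^2=3$ at $\xi_{n-2,m-2}$. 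A concrete check: for $n=m=1$ and $\xi=u_1-u_1^{-1}$ one computes directly that $\chi_1\xi\chi_1=\xi_{1,1}-\epsilon\xi_{0,0}$, while the displayed series gives $\xi_{1,1}$; only the former is consistent with (2)(ii), which does hold for the closed form above (the weights $\epsilon^{n-r}$ cancel the cascade exactly, as one checks for $n=m=1$ and $n=m=2$). So either the printed statement of (2)(i) contains an error that your write-up silently inherits, or your cascade is wrong; in either case the sentence ``produces the series in the statement'' cannot stand without the computation. You should write out the closed form above, prove it by the same induction as in Part (1), reconcile it with the displayed formula, and then deduce (2)(ii) from it.
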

\begin{proof}
By using lemmas \ref{3-3}, \ref{3-4} and \ref{3-5} , this is shown in the same way as that of Lemma 2 of Radulescu \cite{Rad}.
\end{proof}
\begin{lemm}
\label{3-7}
For $\xi \in (W_l^0\ominus S_l^0) \oplus ((W_l^{1,\alpha ,2}\oplus W_l^{1, \beta })\ominus S_l^1)\oplus (W_l^2\ominus  S_l^2) $, $l\geq 1$, the projection $p_\xi $ commutes with the projection $q_n$  and the range of $p_\xi \wedge q_n $ is the subspace of $L^2(M) $ spanned by $\{ \xi _{r,s}\mid r+s=n-l\}$. 
\end{lemm}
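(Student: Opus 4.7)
Throughout, $p_\xi $ denotes the orthogonal projection onto the closure in $L^2 M$ of the $A$-$A$-subbimodule $A\xi A$ generated by $\xi $. The plan is to use Lemma \ref{3-6} to identify this bimodule with the closed linear span of the vectors $\xi _{r,s}$ ($r,s\geq 0$), and then to exploit the fact that $\xi _{r,s}\in W_{l+r+s}$ in order to deduce the commutation with $q_n$.

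First recall that the recursions $\chi _1\chi _1=\chi _2+4$ and $\chi _1\chi _l=\chi _{l+1}+3\chi _{l-1}$ (for $l\geq 2$) express each $\chi _n$ as a polynomial in $\chi _1$, and since $A=\{ \chi _1\} ''$ such polynomials are strong-operator dense in $A$. Consequently $\mathrm{span}\{ \chi _n\xi \chi _m\mid n,m\geq 0\} $ is $L^2$-dense in $A\xi A$.

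Now invoke Lemma \ref{3-6}. Part (i) (case (1) when $l\geq 2$, case (2) when $l=1$ and $\xi $ is of the special $\epsilon $-form) writes each product $\chi _n\xi \chi _m$ as a finite linear combination of vectors $\xi _{r,s}$, so every element of $A\xi A$ lies in $\mathrm{span}\{ \xi _{r,s}\mid r,s\geq 0\} $. Conversely, part (ii) inverts these relations, writing each $\xi _{n,m}$ as a finite linear combination of products $\chi _r\xi \chi _s$, so every $\xi _{n,m}$ lies in $A\xi A$. For $l=1$ and a general $\xi \in W_1^0\ominus S_1^0$, one first decomposes $\xi =\xi _++\xi _-$ into the two $\epsilon $-eigenspaces appearing in Lemma \ref{3-6}(2), applies the argument to each summand, and then reassembles using linearity of $\xi \mapsto \xi _{r,s}$. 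Taking $L^2$-closures yields
\[ \mathrm{range}(p_\xi )=\overline{A\xi A}=\overline{\mathrm{span}\{ \xi _{r,s}\mid r,s\geq 0\} }. \]

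Finally, since $\xi _{r,s}=q_{l+r+s}(\chi _r\xi \chi _s)\in W_{l+r+s}$ and the subspaces $W_n$ are mutually orthogonal, grouping the $\xi _{r,s}$'s according to the value of $r+s$ produces an orthogonal decomposition
\[ \mathrm{range}(p_\xi )=\bigoplus _{k\geq 0}\overline{\mathrm{span}\{ \xi _{r,s}\mid r+s=k\} },\qquad \overline{\mathrm{span}\{ \xi _{r,s}\mid r+s=k\} }\subset W_{l+k}. \]
Hence $p_\xi $ commutes with every $q_n$, and $p_\xi \wedge q_n=p_\xi q_n$ projects onto the closed span of $\{ \xi _{r,s}\mid r+s=n-l\} $, which collapses to $\{ 0\} $ when $n<l$, in agreement with the claim. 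The only real subtlety is the bookkeeping in the exceptional $l=1$, $\xi \in W_1^0\ominus S_1^0$ situation, where one must first split $\xi $ along the $\epsilon =\pm 1$ eigenspaces before invoking Lemma \ref{3-6}(2); once the bimodule identification is in hand, the commutation with $q_n$ is automatic from the grading.
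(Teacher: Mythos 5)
Your argument is in substance the paper's own proof: both directions of Lemma \ref{3-6} identify $\overline{A\xi A}$ with $\overline{\mathrm{span}}\{ \xi _{r,s}\mid r,s\geq 0\}$, and since $\xi _{r,s}\in W_{l+r+s}$ the grading by word length forces $p_\xi $ to commute with every $q_n$ and yields the stated range. The paper merely packages the last step as the projection sandwich $r_n\leq p_\xi \wedge q_n\leq q_np_\xi q_n\leq r_n$, which is a rephrasing of your orthogonal-decomposition argument; so for $l\geq 2$, and for $l=1$ with $\xi $ of the $\epsilon$-homogeneous form $c_1(u_1+\epsilon u_1^{-1})+c_2(u_2+\epsilon u_2^{-1})$, your proof is correct and identical in content to the paper's.

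The genuine problem is your treatment of the remaining case $l=1$, $\xi \in W_1^0\ominus S_1^0$ not $\epsilon$-homogeneous: splitting $\xi =\xi _++\xi _-$ and ``reassembling by linearity'' does not work, and in fact the conclusion of the lemma fails for such $\xi $. The expansion and inversion formulas of Lemmas \ref{3-5}\,(2) and \ref{3-6}\,(2) carry $\epsilon$-dependent coefficients, and by Lemma \ref{3-8}\,(2) the bimodules $\overline{A\xi _+A}$ and $\overline{A\xi _-A}$ are orthogonal; consequently the $W_j$-components of $\chi _n\xi \chi _m$ are combinations of $(\xi _+)_{r,s}$ and $(\xi _-)_{r,s}$ with mismatched coefficients and need not lie in $\mathrm{span}\{ \xi _{r,s}\} =\mathrm{span}\{ (\xi _+)_{r,s}+(\xi _-)_{r,s}\}$. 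Concretely, Lemma \ref{3-5}\,(2) gives $\chi _1\xi _\epsilon \chi _1=(\xi _\epsilon )_{1,1}-\epsilon \xi _\epsilon $, hence $q_1(\chi _1\xi \chi _1)=-\xi _++\xi _-$; together with $\xi =\xi _++\xi _-$ this shows that $q_1(\overline{A\xi A})$ contains $\xi _+$ and $\xi _-$ separately, so either $p_\xi $ fails to commute with $q_1$ or $p_\xi \wedge q_1$ has rank at least two, whereas $\mathrm{span}\{ \xi _{0,0}\} $ is one-dimensional. You should therefore restrict the $l=1$ case to $\epsilon$-homogeneous $\xi $ rather than claim the general case follows by linearity. (The paper's proof tacitly makes the same restriction --- it invokes Lemma \ref{3-6}\,(2), which is only stated for $\epsilon$-homogeneous $\xi $, and Lemma \ref{3-11} only ever applies the present lemma to such vectors --- so this does not damage the overall argument, but your affirmative claim is incorrect as stated.)
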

\begin{proof}
This is shown in the same way as the fact mentioned in the paragraph preceding to Lemma 2 of Radulescu \cite{Rad}.
However, for readers convenience, we present a proof.
Let $r_n$ be a projection onto the subspace $\mathrm{span}\{ \xi _{r,s} \mid r+s=n-l\}$.
By Lemma \ref{3-6} (1) (ii) (2) (i), we have $\xi _{r,s}\in A\xi A$.
Hence we have $r_n \leq p_\xi \wedge q_n$.
On the other hand, by Lemma \ref{3-6} (1) (i) (2) (ii), we have $p_\xi \leq \sum _nr_n$.
Hence the range of $q_np_\xi $ is contained in that of $r_n$.
Thus we have 
\[ p_\xi \wedge q_n \leq q_n p_\xi q_n \leq r_n\]
(The first equality holds without any assumption).
Hence two projections $p_\xi$ and $q_n$ commute and we have $p_\xi \wedge q_n =r_n$.
\end{proof}
\begin{lemm}
\label{3-8}
For $\xi , \xi ' \in (W_l^0\ominus S_l^0)\oplus ((W_l^{1,\alpha , 2}\oplus W_l^{1, \beta }) \ominus S_l^1) \oplus  (W_l^2\ominus S_l^2)$, we have the following statements.

\textup{(1)} When $l\geq 2$, $n,m,n',m'\geq 0$, we have
\[ \langle \xi _{n,m},  \xi ' _{n',m'} \rangle =\delta _{n,n'}\delta _{m,m'} 3^{n+m}\langle \xi , \xi '\rangle .\]

\textup{(2)} When $l=1$ and $\xi $ is of the form  $c_1(u_1+\epsilon u_1^{-1}) +c_2(u_2+\epsilon u_2^{-1})$ for some $\epsilon \in \{ \pm 1\}$ and $c_1,c_2\in \mathbf{C}$, for any $n,m,n',m'$, we have
\[ \langle \xi _{n,m},  \xi ' _{n',m'} \rangle =\delta _{\epsilon , \epsilon '} \delta _{n+m, n'+m'} 3^{n+m}(-3) ^{-|n-n'|} \langle \xi , \xi ' \rangle .\]
\end{lemm}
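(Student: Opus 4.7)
The plan is to prove both parts by induction on $n+n'$ (equivalently on the total number of $\chi_1$'s involved), using the recursion relations from Lemmas \ref{3-3}, \ref{3-4} and \ref{3-5}, combined with the self-adjointness of $\chi_1$ to move it across the inner product.

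The first reduction uses Lemma \ref{3-7}: the vector $\xi_{n,m}$ lies in $W_{l+n+m}$ and $\xi'_{n',m'}$ lies in $W_{l+n'+m'}$. Since $W_k\perp W_{k'}$ for $k\neq k'$, the inner product vanishes unless $n+m=n'+m'$, which already yields the factor $\delta_{n+m,n'+m'}$ in (2) and is the ``half'' of $\delta_{n,n'}\delta_{m,m'}$ in (1). So I may assume $n+m=n'+m'$ throughout.

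For part (1), in the regime $l\geq 2$ the recursions are ``clean'': by Lemma \ref{3-3}, $\xi_{n+1,m}=\chi_1\xi_{n,m}-3\xi_{n-1,m}$ for $n\geq 1$, and by Lemma \ref{3-4} (or \ref{3-5}(1)), $\xi_{1,m}=\chi_1\xi_{0,m}$. Using $\chi_1^*=\chi_1$, each application of these identities converts $\langle\xi_{n,m},\xi'_{n',m'}\rangle$ into a linear combination of inner products with strictly smaller $n+n'$. The base case $n=n'=0$ reduces by the same argument on the right, applying the analogous right-recursions to $\xi'_{0,m}$, down to $\langle\xi,\xi'\rangle$, with each right-step contributing an exact factor of $3$ (from $\xi_{0,s}\chi_1=\xi_{0,s+1}+3\xi_{0,s-1}$ combined with $\xi_{0,1}=\xi\chi_1$). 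Summing and tracking the diagonal Kronecker deltas gives $3^{n+m}\langle\xi,\xi'\rangle$ exactly when $n=n'$.

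Part (2) follows the same scheme, but with Lemma \ref{3-5}(1) replaced by Lemma \ref{3-5}(2): the identities $\chi_1\xi_{0,s}=\xi_{1,s}-\epsilon\xi_{0,s-1}$ and $\xi_{r,0}\chi_1=\xi_{r,1}-\epsilon\xi_{r-1,0}$ introduce boundary correction terms whenever the recursion meets an axis. The $\delta_{\epsilon,\epsilon'}$ emerges because, when running the induction, only matching boundary signs survive (mismatched $\epsilon,\epsilon'$ force the correction terms into inner products of the form $\langle \xi,\xi'\rangle$ multiplied by factors that telescope to zero, using $\xi^*=\epsilon\overline{\xi}$). The $(-3)^{-|n-n'|}$ factor arises from accumulating one boundary correction per ``step off the diagonal'', each contributing a factor of $-\epsilon/3$ relative to the $3^{n+m}$ pattern of part (1). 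The main obstacle will be this combinatorial bookkeeping: it is cleanest to first treat the extreme cases $n'=0$ or $n=0$ by a single induction (where the boundary term fires at every step) to pin down $(-3)^{-n}$ and $(-3)^{-m}$ respectively, and then bootstrap to general $(n,n')$ by recursing inward from both endpoints, verifying that the two reductions produce consistent answers precisely because $n+m=n'+m'$.
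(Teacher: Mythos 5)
Your approach is exactly the paper's: the paper's proof is a one-line deferral to Lemma 3 of Radulescu \cite{Rad}, whose argument is precisely the induction you describe, driven by Lemmas \ref{3-3}, \ref{3-4}, \ref{3-5} and the self-adjointness of $\chi_1$, with the grading $W_k\perp W_{k'}$ ($k\neq k'$) killing the ``upward'' terms $\xi'_{n'+1,m'}$ that appear when $\chi_1$ is moved across the inner product. Two remarks. First, your mechanism for $\delta_{\epsilon,\epsilon'}$ is more complicated than necessary: the whole recursion outputs a scalar multiple of $\langle\xi,\xi'\rangle$, and $\langle c_1(u_1+\epsilon u_1^{-1})+c_2(u_2+\epsilon u_2^{-1}),\,c_1'(u_1+\epsilon' u_1^{-1})+c_2'(u_2+\epsilon' u_2^{-1})\rangle=(1+\epsilon\epsilon')(c_1\overline{c_1'}+c_2\overline{c_2'})$ already vanishes when $\epsilon\neq\epsilon'$, so no telescoping is needed. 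Second, and more substantively: your own bookkeeping (``each step off the diagonal contributes $-\epsilon/3$'') yields $3^{n+m}(-\epsilon/3)^{|n-n'|}\langle\xi,\xi'\rangle=\epsilon^{|n-n'|}3^{n+m}(-3)^{-|n-n'|}\langle\xi,\xi'\rangle$, which is \emph{not} the displayed formula when $\epsilon=-1$ and $|n-n'|$ is odd. A direct check confirms your recursion rather than the display: for $\xi=\xi'=u_1-u_1^{-1}$ (so $\epsilon=-1$), one computes $\xi_{1,0}=\chi_1\xi$ and $\xi'_{0,1}=\xi'\chi_1$ explicitly and finds $\langle\xi_{1,0},\xi'_{0,1}\rangle=2=\langle\xi,\xi'\rangle$, whereas the stated formula gives $3\cdot(-3)^{-1}\cdot 2=-2$. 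So the extra factor $\epsilon^{|n-n'|}$ (present in Radulescu's Lemma 3 and in Lemma 3.1 of \cite{CFRW}) belongs in the statement; you should either record the corrected formula or note the discrepancy rather than silently claiming to have derived the display as written. This does not affect the downstream use of the lemma (only $|\langle\xi_{n,m},\xi'_{n',m'}\rangle|$ and the orthogonality pattern matter), but as a proof of the literal statement your argument would not close.
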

\begin{proof}
By using lemmas \ref{3-3}, \ref{3-4} and \ref{3-5} , this is shown in the same way as Lemma 3 of Radulescu \cite{Rad}.
\end{proof}

For $l\geq 1$, let $P_l$ be the projection onto the subspace of $L^2M$ spanned by $\{ AwA \mid w\in W_k, k\leq l-1\}$.
\begin{lemm}
\label{3-9}
\textup{(1)}  The projection $P_{l-1} $ commutes with $q_l$ and the range of $P_{l-1}q_l$ is exactly $S_l$.

\textup{(2)} For $\xi , \xi ' \in \bigcup _{l\geq 1}((W_l^0\oplus W_l^{1,\alpha ,2} \oplus W_l^{1, \beta }\oplus W_l^2) \ominus S_l)$ with $\langle \xi , \xi ' \rangle =0$, we have $A\xi A \perp A\xi ' A$.

\textup{(3)} For  $\xi  \in \bigcup _{l\geq 1}((W_l^0\oplus W_l^{1,\alpha ,2} \oplus W_l^{1, \beta }\oplus W_l^2) \ominus S_l)$, $\xi '\in W_l^{1, \alpha ,1}\oplus \mathbf{C}v_1^{\pm l} \oplus \mathbf{C}v_2^{\pm l}$, we have $A\xi A\perp A\xi 'A$.  
\end{lemm}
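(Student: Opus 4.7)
My plan is to prove the three statements together, treating Part (1) as a structural identification and Parts (2)--(3) as orthogonality statements.

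For Part (1), the inclusion $S_l \subseteq \mathrm{range}(P_{l-1}) \cap W_l$ is immediate. For the reverse, every element of $\mathrm{range}(P_{l-1}) \cap W_l$ is a limit of linear combinations of $w_{r,s} := q_l(\chi_r w \chi_s)$ with $w \in W_k$, $k \leq l-1$, and $r+s = l-k$. I would prove that each $w_{r,s}$ lies in $S_l$ by induction on $r+s$: the base case $r+s \leq 1$ is the definition of $S_l$, and for the inductive step Lemma \ref{3-3} gives $\chi_1 w_{r-1,s} = w_{r,s} + 3 w_{r-2,s}$, so $w_{r,s} = q_l(\chi_1 w_{r-1,s})$, and since $w_{r-1,s} \in W_{l-1}$ this lies in $S_l$ by definition. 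The commutation $[P_{l-1}, q_l] = 0$ follows similarly: for any spanning vector $\chi_r w \chi_s$ of $\mathrm{range}(P_{l-1})$ and any $n$, the $W_n$-component $q_n(\chi_r w \chi_s)$ remains in $\mathrm{range}(P_{l-1})$ because it equals a combination of $w_{r',s'}$'s that sit inside $A w A$, so $\mathrm{range}(P_{l-1})$ is graded by word length.

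For Parts (2) and (3), the key identity is $\langle \xi_{r,s}, \xi'_{r',s'} \rangle = \langle \xi, \chi_r \xi'_{r',s'} \chi_s \rangle$, obtained by moving the self-adjoint $\chi$-operators to the other side. Since $\xi \in W_k$, only the $W_k$-component of $\chi_r \xi'_{r',s'} \chi_s \in A\xi' A$ contributes. If $\xi' \in W_{k'}$ with $k' > k$, this vanishes by the grading of $A\xi' A$: given by Lemma \ref{3-7} in the nice case of Part (2), and in Part (3) via Lemma \ref{8} together with the direct observation that $\chi_r v_i^{\pm l} \chi_s$ lies cleanly in $W_{l+r+s}$ (since $v_i^{\pm l}$ does not cancel with strings of $u_j^{\pm 1}$). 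When $\xi, \xi'$ both sit at length $l$, Part (2) follows from Lemma \ref{3-8}, which yields $\langle \xi_{n,m}, \xi'_{n',m'}\rangle \propto \langle \xi, \xi' \rangle = 0$. For same-length Part (3), the nice $\xi$ is orthogonal to the $W_l$-component of $A\xi' A$ (spanned by $\gamma_j^{i,l}$'s or $v_i^{\pm l}$) via the orthogonal decomposition in Lemma \ref{7-4}, and a Lemma \ref{3-8}-style inner-product calculation (adapted using Lemma \ref{8}) lifts this orthogonality to all $(n,m)$-pairs.

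The main obstacle will be the fine bookkeeping for the special elements in Part (3). Lemma \ref{8} shows that $A\gamma_j^{i,l} A$ acquires extra $v_i^{l-\mathrm{sgn}(l)}$-contributions beyond the span of the $(\gamma_j^{i,l})_{r,s}$'s, so for the specific combinations $\gamma_l^i, \overline{\gamma}_l^i$ one must verify that the precise cancellations built into Lemma \ref{7-4} (that place these in $W_l \ominus S_l$) propagate to the higher-order bimodule elements, ensuring the residues either vanish or remain orthogonal to every nice $A\xi A$. This delicate tracking is the technical heart of the proof, following the algebraic pattern of the preceding lemmas and of Radulescu's original analysis.
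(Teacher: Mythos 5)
Part (2) of your proposal, and the overall architecture (use the grading to handle distinct lengths, Lemma \ref{3-8} to handle equal lengths), match the paper. But there are two genuine gaps, one in each of Parts (1) and (3).

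In Part (1) you only show $q_l(\chi_r w\chi_s)\in S_l$ when $r+s=l-|w|$, and you assert that these vectors span $\mathrm{ran}(P_{l-1})\cap W_l$ and that $q_n(\chi_rw\chi_s)$ ``equals a combination of $w_{r',s'}$'s that sit inside $AwA$''. That last claim is Lemma \ref{3-6}, which is proved only for $w$ in the distinguished complements $(W_k^0\ominus S_k^0)\oplus((W_k^{1,\alpha,2}\oplus W_k^{1,\beta})\ominus S_k^1)\oplus(W_k^2\ominus S_k^2)$; it is false for general $w\in W_k$ (for instance, by Lemma \ref{8}, $\chi_n\gamma^{i,l}_j\chi_m$ involves terms from $Av_i^{l-\mathrm{sgn}(l)}A$ outside the span of the $(\gamma^{i,l}_j)_{r,s}$). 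The case $r+s>l-|w|$ is the actual content of the lemma: if you peel off one $\chi_1$ via $\chi_r=\chi_1\chi_{r-1}-3\chi_{r-2}$, you are left with terms $q_l(\chi_1 y)$ where $y=q_{l+1}(\chi_{r-1}w\chi_s)\in W_{l+1}$, and the ``lowering'' vector $q_l(\chi_1 y)$ for $y\in W_{l+1}$ is not covered by the definition of $S_l$ (which only involves $q_l(\chi_1 w')$ for $w'$ of length at most $l-1$), so the induction does not close. The paper breaks this circle by inducting on $l$, splitting $w=w_1+w_2\in S_k\oplus(W_k\ominus S_k)$, unwinding $w_1$ into lower-length generators by the induction hypothesis, and only then invoking the closed-form expansions of Lemmas \ref{3-6} and \ref{8} on the pieces where they are valid. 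Your sketch omits this reduction, and without it both the spanning claim and the commutation are unproved.

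In Part (3), the ``direct observation that $\chi_rv_i^{\pm l}\chi_s$ lies cleanly in $W_{l+r+s}$'' is false: since $u_iv_i=dv_iu_i$, a $u_i$-power to the left of $v_i^{\pm l}$ commutes through the $v$-block up to a power of $d$ and can then cancel against the right-hand word; already $q_l(\chi_1v_i^l\chi_1)=(d^{l}+d^{-l})v_i^l\neq 0$. This failure of grading on $Av_i^{\pm l}A$ and $AW_l^{1,\alpha,1}A$ is exactly why the paper isolates the subspace $L$ and builds the auxiliary basis $\xi^{i,l,k}_{r,s}$. What does survive, and suffices for the cross-length case, is that every term of $\chi_rv_i^{\pm l}\chi_s$ still contains the block $v_i^{\pm l}$ and hence has degree at least $l$. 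For the remaining cases your $v$-counting disposes only of $\xi\in W_k^0$ and $W_k^2$; for $\xi\in W_k^{1,\beta}$ or $W_k^{1,\alpha,2}$, whose terms also carry exactly one $v$-block, possibly with the same exponent $l-\mathrm{sgn}(l)$ or $\pm l$, you defer the verification to ``delicate tracking'' that is never carried out. The paper is admittedly just as terse here (``direct computation'' and counting the $v_i$'s), so your plan is aligned with its intent, but as written neither the false grading claim nor the deferred computation amounts to a proof of (3).
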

\begin{proof}
Although this is shown by the same argument as that of the proof of Lemma 4 of Radulescu \cite{Rad}, we present a proof .

(1) We show this by induction on $l$.
When $l=0$, then statement (1) is obvious because we have $P_{l-1}=0$.
Assume that statement (1) holds for any $k=0, \cdots , l$.
We first show the following claim.

\bigskip

\textbf{Claim.}
We have $q_{l+1}(\chi _p w\chi _q) \in S_{l+1}^i$ for any $p,q\geq 0$, $w\in W_k^i$ with $k\leq l$.

\textit{Proof of Claim}.
Let $w=w_1+w_2\in S_k^i\oplus (W_k^i\ominus S_k^i)$ be the orthogonal decomposition.
Then by the induction hypothesis, we have $w_1\in \mathrm{ran}P_{k-1}^i$.
Hence it is a finite sum of elements of the form $\chi _{p'}w'\chi _{q'}$ for some $p',q' \geq 0$, $w' \in W_{k'}^i$ ($k'<k$).
Hence by induction, it is possible to show that $w$ is a sum of  elements of the form $\chi _r\gamma \chi _s$ for some $\gamma \in W_k^i\ominus S_k^i$ with $k\leq l$, $r,s \geq 0$.
Hence we may assume that $w \in W_k^i \ominus S_k^i $ with $k\leq l$.
However, by Lemma \ref{3-6}, when $i=0,2,(1, \alpha ,2)$ and $(1, \beta)$, it is enough to show that $w _{p,q}\in S_{l+1}^i$ for any $p,q \geq 0$ with $p+q=l-k$.
This follows from Lemmas \ref{3-3}, \ref{3-4} and \ref{3-5}.
When $i=(1, \alpha ,1)$ or $w=v_i^k$, by Lemma \ref{8}, it is enough to show that  $(\gamma _j^{i,k})_{p,q} \in S_{l+1}^i$ for any $p,q\geq 0$ with $p+q=l-k$.
This follows from Lemma \ref{3-3} and the equalities of the proof of Lemma \ref{8}.
Thus we have $q_{l+1}(\chi _p\gamma \chi _q)\in S_{l+1}$.
Thus Claim holds.
\qed

\bigskip

By Claim, we have
\[ \mathrm{ran} q_{l+1}P_l ^i\subset S^i_{l+1} \subset \mathrm{ran}P_l^i\wedge q_{l+1}.\]
The second inclusion of the above is obvious.
Hence $P_l^i$ commutes with $q_{l+1}$ and the range of $P_l^i\wedge q_{l+1}$ is $S_l^i$.

(2) When $l\not =l'$, then by statement (1), we have $A\xi A\perp A\xi ' A$.
When $l=l'$, then by Lemma \ref{3-8}, we have $A\xi A\perp \xi '$.

(3) When $\xi \in W_l^{1,\alpha ,2}$, $\xi ' \in W_l^{1, \alpha ,1}$, this is shown by the direct computation.
In the other cases, this is shown by counting the number of $v_i^l$, we have $A\xi A\perp \xi '$.
\end{proof}

For each non-zero integer $l$, an integer $k$ and $r,s\geq 0$, set
\[ \xi _{r,s}^{i,l,k}:=\frac{1}{3^{\frac{r+s}{2}-1}}\sum _{w\in W_r^0, \ \mathrm{ending} \ \mathrm{with} \ u_{3-i}^{\pm 1}} wv_i^lu_1^k \sum _{w\in W_s^0, \ \mathrm{beginning } \ \mathrm{with} \ u_{3-i}^{\pm 1}}w'.\]
Set $\tilde{\chi}_1:=\chi _1/\sqrt{3}$.
Then we have the following.
\begin{lemm}
\label{5-1}
We have
\[ \tilde{\chi }_1\xi_{r,s}^{i,l,k} =\begin{cases}
    \xi _{r+1,s}^{i,l,k} +\xi _{r-1,s}^{i,l,k}  & (r \geq 1) \\
    \xi _{1,s}^{i,l,k} +\frac{1}{\sqrt{3}}(d^l\xi _{0,s}^{i,l,k+1}+d^{-l}\xi _{0,s}^{i,l,k-1}) & (r=0) .
  \end{cases}
  \]
 Similarly, we have
 \[ \xi_{r,s}^{i,l,k} \tilde{\chi }_1 =\begin{cases}
    \xi _{r,s+1}^{i,l,k} +\xi _{r,s-1}^{i,l,k}  & (s \geq 1) \\
    \xi _{r,1}^{i,l,k} +\frac{1}{\sqrt{3}}(\xi _{r,0}^{i,l,k+1}+\xi _{r,0}^{i,l,k-1}) & (s=0) .
  \end{cases}
  \]
  \end{lemm}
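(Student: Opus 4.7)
The plan is to verify the four displayed identities by direct expansion, exploiting the structure of $\xi_{r,s}^{i,l,k}$ as a product of a left block, a middle element of $R_i$, and a right block. First I would set $\alpha_r^i := \sum w$ over $w \in W_r^0$ ending in $u_{3-i}^{\pm 1}$ and $\beta_s^i := \sum w'$ over $w' \in W_s^0$ beginning in $u_{3-i}^{\pm 1}$ (with $\alpha_0^i = 1 = \beta_0^i$ by convention), so that $\xi_{r,s}^{i,l,k}$ equals $3^{1-(r+s)/2}\alpha_r^i v_i^l u_i^k \beta_s^i$. Since $\alpha_r^i$ ends in $R_{3-i}$, $v_i^l u_i^k$ lies in $R_i$, and $\beta_s^i$ begins in $R_{3-i}$, no reduction across adjacent blocks can occur in this product; hence left-multiplication by $\chi_1$ affects only the leftmost block. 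Two regimes then arise: $r \geq 1$, where $\chi_1$ acts on $\alpha_r^i$, and $r = 0$, where $\chi_1$ acts directly on $v_i^l u_i^k$.

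For the regime $r \geq 1$, I would prove the combinatorial recurrence $\chi_1 \alpha_r^i = \alpha_{r+1}^i + 3\alpha_{r-1}^i$ by a counting argument in the free group $\langle u_1, u_2 \rangle$. For each letter $u_a^\delta$ appearing in $\chi_1$ and each $w \in A_r^i$, the product $u_a^\delta w$ is either an extended word of length $r+1$ belonging to $A_{r+1}^i$, or a cancellation yielding a shorter word of length $r-1$; the key point is that each $w' \in A_{r-1}^i$ with leading letter $u_b^\gamma$ arises from exactly three such cancellations, one for each of the three choices of $u_a^\delta \neq u_b^\gamma$. Multiplying this recurrence on the right by $v_i^l u_i^k \beta_s^i$ and using that the normalizations of $\xi_{r\pm 1,s}^{i,l,k}$ relative to $\xi_{r,s}^{i,l,k}$ differ by factors of $3^{\mp 1/2}$, one absorbs the coefficient $3$ and obtains the first displayed identity.

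For the regime $r = 0$, I would expand $\chi_1 v_i^l u_i^k \beta_s^i$ term by term. The summands $u_{3-i}^{\pm 1}$ lie in the free factor opposite to $v_i^l$, so they prepend without reduction, contributing $(u_{3-i}+u_{3-i}^{-1}) v_i^l u_i^k \beta_s^i = \alpha_1^i v_i^l u_i^k \beta_s^i$, which after renormalization is precisely the $\xi_{1,s}^{i,l,k}$ term. The summands $u_i^{\pm 1}$, by contrast, commute through $v_i^l$ via the twisted relation $u_i v_i^l = d^l v_i^l u_i$, producing $d^{\pm l} v_i^l u_i^{k\pm 1} \beta_s^i$, which after renormalization gives $\frac{1}{\sqrt{3}}(d^l \xi_{0,s}^{i,l,k+1} + d^{-l} \xi_{0,s}^{i,l,k-1})$. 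Combining yields the second displayed identity. The two right-multiplication statements follow from the entirely symmetric computation on $\beta_s^i$, using the commutation relation in the form $v_i^l u_i = d^{-l} u_i v_i^l$.

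The main technical nuisance will be careful bookkeeping of the normalizations $3^{1-(r+s)/2}$ through each step, particularly at the boundary $r = 1$ where the cancellation count in $\chi_1 \alpha_1^i$ must be reconciled with the convention fixing $\alpha_0^i$. The essential structural input---that free-product reduction cannot propagate beyond the $\alpha_r^i$ block---is immediate from the alternating $R_{3-i}/R_i/R_{3-i}$ pattern built into the definitions of $\alpha_r^i$, $v_i^l u_i^k$, and $\beta_s^i$.
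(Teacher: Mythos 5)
Your overall strategy is the same as the paper's: write $\xi_{r,s}^{i,l,k}=3^{1-(r+s)/2}\alpha_r^i\,v_i^lu_i^k\,\beta_s^i$ with $\alpha_r^i$, $\beta_s^i$ the restricted block sums, observe that for $r\geq 1$ left multiplication by $\chi_1$ only touches $\alpha_r^i$, and for $r=0$ let the letters $u_i^{\pm 1}$ be absorbed into $v_i^lu_i^k$ with the phases $d^{\pm l}$. Your $r=0$ computation coincides with the one the paper actually writes out and is correct, as is the $r\geq 2$ part of your counting argument.

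The gap is at $r=1$, exactly the boundary you defer to ``bookkeeping.'' Your claim that each word of $A_{r-1}^i$ arises from exactly three cancellations holds only for $r\geq 2$. At $r=1$ the family $A_1^i$ consists of the two words $u_{3-i}^{\pm 1}$ alone, so only the two summands $u_{3-i}^{\mp 1}$ of $\chi_1$ can cancel down to the empty word; the summands $u_i^{\pm 1}$ never cancel against $u_{3-i}^{\pm 1}$ (they are instead absorbed into $v_i^lu_i^k$, which is precisely the $r=0$ phenomenon). Hence $\chi_1\alpha_1^i=\alpha_2^i+2$, not $\alpha_2^i+3\alpha_0^i$, and after normalizing one gets $\tilde\chi_1\xi_{1,s}^{i,l,k}=\xi_{2,s}^{i,l,k}+\tfrac{2}{3}\xi_{0,s}^{i,l,k}$ rather than the stated $\xi_{2,s}^{i,l,k}+\xi_{0,s}^{i,l,k}$. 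This cannot be absorbed into a choice of convention for $\alpha_0^i$: the normalization of $\xi_{0,s}^{i,l,k}$ is already pinned down by the $r=0$ identity (and by Lemma \ref{15}). So either the coefficient at $r=1$ must be corrected or the argument changed; your plan as written would fail there. To be fair, the paper's own proof has the same blind spot --- it cites Lemma \ref{3-3} for all $r\geq 1$, but the count $3$ in that lemma comes from excluding only the inverse of the first letter of the middle word, whereas here two of the four letters are excluded. A smaller point: in the right-multiplication case with $s=0$ no commutation relation is needed at all, since $u_i^{\pm 1}$ sits adjacent to $u_i^k$ and simply shifts $k$; that is why the stated formula there carries no factors $d^{\pm l}$, and running the computation ``symmetrically with $v_i^lu_i=d^{-l}u_iv_i^l$'' would introduce spurious phases.
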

  \begin{proof}
  When $r\geq 1$, the first equality follows from Lemma \ref{3-3}.
  We show the first equality when $r=0$.
 We have
 \begin{align*}
 \chi _1\xi _{0,s}^{i,l,k}&=\frac{1}{3^{s/2-1}}\chi _1 v_i^l u_i^k \sum _{w\in W_s^0, \ w_1=u_{3-i}^{\pm 1}}w \\
                                 &=\sqrt{3} \xi _{1,s}^{i,l,k} +\frac{1}{3^{s/2-1}}(d^lv_i^lu_i^{k+1}+d^{-l}v_i^lu_i^{k-1})\sum _{w\in W_s^0,\ w_1=u_{3-i}^{\pm 1}}w \\
                                 &=\sqrt{3}\xi _{1,s}^{i,l,k }+d^l\xi _{0,s}^{i,l,k+1}+d^{-l}\xi _{0,s}^{i,l,k-1}.
 \end{align*}
 Thus we have
 \[ \tilde{\chi }_1\xi _{0,s}^{i,l,k} =\xi _{1,s}^{i,l,k}+\frac{1}{\sqrt{3}}(d^l\xi _{0,s}^{i,l,k+1}+d^{-l}\xi _{0,s}^{i,l,k-1}).\]
 The second equality is shown in the same way.
  \end{proof}

We have the following.
\begin{lemm}
\label{15}
\textup{(1)} For any $(i,l,k)$, we have
\[
  \| \xi _{r,s}^{i,l,k} \| _2= \begin{cases}
    2 & (r,s \geq 1) \\
    \sqrt{6} & (r=0 \ \mathrm{or} \ s=0, \ (r,s) \not =(0,0)) \\
    3 & (r=s=0).
  \end{cases}
\]

\textup{(2)} When $(i,l,k,r,s)\not =(i',l', k',r',s')$, then the vector $\xi _{r,s}^{i,l,k}$ is orthogonal to $\xi _{r',s'}^{i',l',k'}$.
\end{lemm}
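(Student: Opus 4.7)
The plan is to expand each $\xi_{r,s}^{i,l,k}$ in the orthonormal basis of $L^2M$ of completely reduced words $u_{i_1}^{k_1}v_{i_1}^{l_1}\cdots u_{i_n}^{k_n}v_{i_n}^{l_n}$ from Section 3.1, and then read off both statements directly. First I would use the commutation relation $u_iv_i=dv_iu_i$ to rewrite the central block as $v_i^lu_i^k=d^{-lk}u_i^kv_i^l$. Because each $w\in\tilde{W}_r^0$ in the sum ends in $u_{3-i}^{\pm 1}$ and each $w'\in\tilde{W}_s^0$ begins with $u_{3-i}^{\pm 1}$, the two junctions in $w\cdot u_i^kv_i^l\cdot w'$ lie between letters of opposite free components, so every such product is already a completely reduced word in canonical form. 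Hence
\[ \xi_{r,s}^{i,l,k}=\frac{d^{-lk}}{3^{(r+s)/2-1}}\sum_{(w,w')}wu_i^kv_i^lw', \]
a scalar multiple of a sum of distinct basis vectors.

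For statement (1), the key step is a direct count of admissible pairs $(w,w')$. A reduced word of length $r\geq 1$ in the letters $u_j^{\pm 1}$ whose last letter is a prescribed $u_{3-i}^{\pm 1}$ is built by fixing that last letter and then, working right to left, choosing each preceding letter from three options (any letter except the inverse of the one just placed), giving $3^{r-1}$ words per sign and $2\cdot 3^{r-1}$ in total; for $r=0$ only the empty word qualifies. Applying the same count to $w'$, using $|d^{-lk}|=1$, and dividing by $3^{r+s-2}$ gives $\|\xi_{r,s}^{i,l,k}\|_2^2\in\{4,6,9\}$ in the three cases, which is (1).

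For statement (2), I would argue that the canonical form $wu_i^kv_i^lw'$ determines the tuple $(i,l,k,r,s)$ (and in fact $(w,w')$) uniquely. Since neither $w$ nor $w'$ contains a $v$-letter, $v_i^l$ is the unique $v$-syllable of the canonical word, which recovers $i$ and $l$; the $u_i$-syllable immediately preceding $v_i^l$ in canonical form recovers $k$; deleting $u_i^kv_i^l$ leaves $w$ and $w'$ as the prefix and suffix, which gives $r$ and $s$. Distinct tuples therefore have disjoint supports in the orthonormal basis, and the associated sums are orthogonal.

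I expect the only real issue to be bookkeeping: tracking the scalar $d^{-lk}$ and verifying that the endpoint conditions on $w,w'$ genuinely prevent any collapse of the central block into its neighbours, including the boundary cases $r=0$, $s=0$, and $k=0$. Once these points are checked, both parts reduce to the counting above and the orthonormality of completely reduced words.
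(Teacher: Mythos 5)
Your proposal is correct and is essentially the paper's own argument: the paper also proves (1) by counting that there are $2\cdot 3^{r-1}$ reduced $u$-words of length $r\geq 1$ ending with $u_{3-i}^{\pm 1}$ (and $1$ for $r=0$), and dismisses (2) as obvious since distinct tuples index disjoint sets of completely reduced basis words. Your additional bookkeeping (the unimodular phase $d^{-lk}$, the non-cancellation at the two junctions, and the recovery of $(i,l,k,r,s)$ from the unique $v$-syllable) just makes explicit what the paper leaves implicit.
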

\begin{proof}
(1) The number of the words with length $r $ ending with either $u_2$ or $u_2^{-1}$ is exactly $2\cdot 3 ^{r-1}$ when $r\not =0$ and $1$ when $r=0$.
Thus we have the desired equality.

(2) This is obvious.
\end{proof}

Consider a finite sum
\[ \xi :=\sum _{i=1,2, \ j=(1,\pm ), 2,3, \ r,s\geq 0, \ l\in \mathbf{Z}} a_{r,s}^{i ,l,j}(\gamma _j^{i,l})_{r,s}.\]
Let
\[ \xi =\sum _{r,s,k, i=1,2}\beta _{r,s}^{i,l,k} \xi _{r,s}^{i,l,k}\]
be the expansion along the orthonormal system $\{ \xi _{r,s}^{i,l,k}\}$ (this is always possible).
\begin{lemm}
\label{16}
Let $\xi$ and $\{ \beta _{r,s}^k\}$ be as above.
Then we have the following.

\textup{(1)} We have
\[ \beta ^{i,l,k}_{r,s}=\frac{1}{3^{|k|/2}}(\sqrt{3}\sum _{j=0}^{|k|-1}d^{l\mathrm{sgn}(k)j}\beta ^{i,l,\mathrm{sgn}(k)}_{r+j,s+|k|-j-1}+\sum _{j=1}^{|k|-1}d^{l\mathrm{sgn}(k)j}\beta ^{i,l,0}_{r+j,s+|k|-j})\]
for any $i,l,k,r,s$. 

\textup{(2)} There exists a constant $C$ such that if we have $\| \xi \|_2 \leq 1$, for any $k_0\in \mathbf{N}$, we have
\[ \frac{1}{3}\| \sum _{|k|\geq k_0}\beta _{r,s}^{i,l,k}\| _2\leq \sum _{|k|\geq k_0}|\beta _{r,s}^{i,l,k}|^2 \leq \frac{C}{3\cdot 2^{k_0}}.\]
The constant  $C$ does not depend on $\xi$ and $k_0$.
\end{lemm}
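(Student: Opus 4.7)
\textit{Plan for Lemma \ref{16}.}  I would prove part (1) by iterating the boundary identities of Lemma \ref{5-1} to express $\xi^{i,l,k}_{r,s}$ with $|k|\geq 2$ as an explicit linear combination of basis vectors indexed by $\pm 1$ and $0$, then deduce part (2) as an $\ell^2$ tail estimate.

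For part (1), I focus on $k>0$ (the case $k<0$ follows by replacing $d$ with $\overline{d}=d^{-1}$).  The left-boundary identity
\[
\tilde\chi_1\,\xi^{i,l,k-1}_{0,s} = \xi^{i,l,k-1}_{1,s} + \tfrac{1}{\sqrt{3}}\bigl(d^{l}\,\xi^{i,l,k}_{0,s} + d^{-l}\,\xi^{i,l,k-2}_{0,s}\bigr)
\]
solves for $\xi^{i,l,k}_{0,s}$ and introduces a factor $d^{-l}/\sqrt{3}$.  Iterating, and using the interior identity $\tilde\chi_1\xi^{i,l,k}_{r,s} = \xi^{i,l,k}_{r+1,s}+\xi^{i,l,k}_{r-1,s}$ to absorb the $\tilde\chi_1$-action into shifts of $(r,s)$, one obtains after $|k|-1$ steps a formula writing $\xi^{i,l,k}_{r,s}$ as a linear combination of $\xi^{i,l,\mathrm{sgn}(k)}_{r',s'}$ and $\xi^{i,l,0}_{r',s'}$ with coefficients involving $d^{-l\,\mathrm{sgn}(k)j}$ and $3^{-|k|/2}$.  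Taking inner products with $\xi$ --- which conjugates the phase $d^{-l\,\mathrm{sgn}(k)j}$ to $d^{l\,\mathrm{sgn}(k)j}$ by sesquilinearity --- and using the mutual orthogonality of $\{\xi^{i,l,k}_{r,s}\}$ from Lemma \ref{15}(2) then reads off the asserted formula for $\beta^{i,l,k}_{r,s}$.  A direct sanity check at $k=2$ with $\xi = q_{l+1}(\chi_1 v_1^{l-1}u_1) = (\gamma_{1,+}^{1,l})_{1,0}$, using $u_1v_1^{l-1} = d^{l-1}v_1^{l-1}u_1$, recovers precisely the coefficient $d^{l-1}/3$ predicted by the formula.

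For part (2), squaring (1) and applying Cauchy--Schwarz separately to the two sums gives
\[
|\beta^{i,l,k}_{r,s}|^2 \leq \frac{C\,|k|}{3^{|k|}}\sum_{j=0}^{|k|-1}\bigl(|\beta^{i,l,\mathrm{sgn}(k)}_{r+j,s+|k|-j-1}|^2 + |\beta^{i,l,0}_{r+j,s+|k|-j}|^2\bigr).
\]
Summing over $(r,s)$, each individual $(r',s')$-indexed coefficient on the right is counted at most $|k|$ times, so Parseval together with $\|\xi\|_2\leq 1$ gives $\sum_{r,s}|\beta^{i,l,k}_{r,s}|^2 \leq C'|k|^2/3^{|k|}$.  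Summing over $|k|\geq k_0$ yields a geometric-type series bounded by $C/(3\cdot 2^{k_0})$, with the $2^{k_0}$ being a crude but sufficient weakening of the sharper $3^{k_0}$ available.  The left-hand inequality follows from orthogonality of $\{\xi^{i,l,k}_{r,s}\}$ together with the upper bound $\|\xi^{i,l,k}_{r,s}\|_2^2 \leq 9$ from Lemma \ref{15}(1).

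The main obstacle is the derivation in part (1).  Each application of the boundary identity branches into two new terms (with $k$ shifted by $\pm 1$), so the telescope must be carefully organized.  In particular, the two separate sums over $j$ in the stated formula, with their different ranges $j=0,\ldots,|k|-1$ and $j=1,\ldots,|k|-1$, encode whether the iteration terminates on a $\mathrm{sgn}(k)$-type or a $0$-type basis vector, and the precise powers of $d$ and $\sqrt{3}$ must be tracked across all branches of the iteration.  Once part (1) is established, part (2) is a routine $\ell^2$ bookkeeping estimate.
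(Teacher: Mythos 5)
There is a genuine gap in your derivation of part (1), and it is conceptual rather than computational. Identity (1) is a nontrivial linear constraint on the coefficients $\beta^{i,l,k}_{r,s}$ that holds \emph{only because} $\xi$ is a finite combination of the vectors $(\gamma^{i,l}_j)_{r,s}$; it fails for a generic element of $\bigoplus_{i,l,k,r,s}\mathbf{C}\xi^{i,l,k}_{r,s}$ (take $\xi=\xi^{i,l,5}_{0,0}$: the left side is nonzero while every term on the right vanishes). Your proposed mechanism --- iterate the recursions of Lemma \ref{5-1} to ``solve for'' $\xi^{i,l,k}_{0,s}$ and then pair with $\xi$ --- cannot detect this, because the identities of Lemma \ref{5-1} are satisfied by \emph{every} basis vector $\xi^{i,l,k}_{r,s}$, hence by every vector of the ambient span. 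Concretely, solving the boundary identity for $\xi^{i,l,k}_{0,s}$ leaves a term $\tilde\chi_1\xi^{i,l,k-1}_{0,s}$, and pairing with $\xi$ turns it into $\langle\tilde\chi_1\xi,\xi^{i,l,k-1}_{0,s}\rangle$; expanding $\tilde\chi_1\xi$ by the very same Lemma \ref{5-1} and substituting back, everything cancels and you recover the tautology $\langle\xi,\tilde\chi_1\eta\rangle=\langle\tilde\chi_1\xi,\eta\rangle$, not relation (1). (Your $k=2$ sanity check works precisely because there you plug in an explicit generator $(\gamma^{1,l}_{1,+})_{1,0}$ --- i.e., you quietly use the one piece of information your general argument omits.) The paper's proof goes through the generators: it records that the only $(\gamma^{i,l}_j)_{r',s'}$ contributing to the coefficient of $\xi^{i,l,k}_{r,s}$ are $(\gamma_2)_{r+|k|,s-1}$, $(\gamma_3)_{r-1,s+|k|}$ and $(\gamma_{1,\mathrm{sgn}(k)})_{r+j,s+|k|-j-1}$ for $j=0,\dots,|k|-1$, reads off $\beta^{i,l,k}_{r,s}=3^{(r+s)/2}\bigl(d^{lk}a^2_{r+|k|,s-1}+a^3_{r-1,s+|k|}+\sum_j d^{l\mathrm{sgn}(k)j}a^{1,\mathrm{sgn}(k)}_{r+j,s+|k|-j-1}\bigr)$, and then verifies (1) by a telescoping rearrangement of the right-hand side in terms of the $a$'s. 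Some such explicit use of the word expansions of $\gamma^{i,l}_{1,\pm},\gamma^{i,l}_2,\gamma^{i,l}_3$ is unavoidable, and your outline does not contain it.

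Part (2) of your plan is essentially sound once (1) is granted, and is close to the paper's argument: both apply Cauchy--Schwarz to the at most $2|k|$ terms of (1), use orthogonality of the $\xi^{i,l,k'}_{r',s'}$ together with $\|\xi\|_2\le 1$ to bound the resulting $\ell^2$ sums by a constant times $3^{-|k|}$, and sum a geometric-type series over $|k|\ge k_0$. The only difference is that the paper keeps $(r,s)$ fixed throughout (which is what the stated inequality requires), whereas you sum over $(r,s)$ as well; your version implies the fixed-$(r,s)$ one, so this is harmless. The fix you need is entirely in part (1).
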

\begin{proof}
Notice that $\xi ^{i,l,k}_{r,s} \perp \xi ^{i',l',k'}_{r',s'}$, $\gamma _j^{i,l}\perp \gamma _{j'}^{i',l'}$ for any $(i,l) \not =(i',l')$.
Thus we may assume that only one $(i,l)$ appears in the sum of the definition of $\xi$.
In the lest of the proof, we denote $\alpha _{r,s}^{i,l,j}$ by $\alpha _{r,s}^{j}$, $\beta _{r,s}^{i,l,k}$ by $\beta _{r,s}^k$. 

(1) The components contributing to $\xi _{r,s}^k$ are $(\gamma ^{i,l+\mathrm{sgn}(l)}_2)_{r+|k|,s-1}$,$(\gamma ^{i,l+\mathrm{sgn}(l)}_3)_{r-1,s+|k|}$ and $(\gamma ^{i,l+\mathrm{sgn}(l)}_{1,\mathrm{sgn}(k)})_{r+j, s+|k|-j-1}$  ($j=0, \cdots , |k|-1$).
The coefficient coming from $(\gamma ^{i,l+\mathrm{sgn}(l)}_2)_{r+|k|,s-1}$ is 
\[ 3^{\frac{r+s}{2}}d^{lk}a^2_{r+|k|,s-1}.\]
The coefficient coming from $(\gamma ^{i,l+\mathrm{sgn}(l)}_3)_{r-1,s+|k|}$ is
\[ 3^{\frac{r+s}{2}}a^3_{r-1,s+|k|}.\]
The coefficient coming from $(\gamma ^{i,l+\mathrm{sgn}(l)}_{1,\mathrm{sgn}(k)})_{r+j,s+|k|-j-1}$ ($j=0, \cdots , |k|-1$) is 
\[ 3^{\frac{r+s}{2}}d^{l\mathrm{sgn}(k)j}a^{1,\mathrm{sgn}(k)}_{r+j,s+|k|-j-1}.\]
Hence the coefficient of $\xi _{r,s}^k$ is
\[ \beta _{r,s}^k=3^{\frac{r+s}{2}}\Biggl( d^{lk}a_{r+|k|,s-1}^2+a_{r-1,s+|k|}^3+\sum _{j=0}^{|k|-1}d^{l\mathrm{sgn}(k)j}a_{r+j,s+|k|-j-1}^{1,\mathrm{sgn}(k)}\Biggr).\]
We also have
\begin{align*}
\ & d^{lk}a_{r+|k|,s-1}^2+a_{r-1,s+|k|}^3+\sum _{j=0}^{|k|-1}d^{l\mathrm{sgn}(k)j}a_{r+j, s+|k|-j-1}^{1,\mathrm{sgn}(k)} \\
   &=d^{l\mathrm{sgn}(k)(|k|-1)}(d^{l\mathrm{sgn}(k)}a_{r+|k|,s-1}^2+a_{r+|k|-2,s+1}^3+a^{1,\mathrm{sgn}(k)}_{r+|k|-1,s} \\
   &-d^{l\mathrm{sgn}(k)(|k|-1)}(a_{r+|k|-2,s+1}^3+a_{r+|k|-1,s}^2)  \\
   &+d^{l\mathrm{sgn}(k)(|k|-2)}(d^{l\mathrm{sgn}(k)}a_{r+|k|-1,s}^2+a_{r+|k|-3,s+2}^3+a_{r+|k|-2,s+1}^{1,\mathrm{sgn}(k)}) \\
   &-d^{l\mathrm{sgn}(k)(|k|-2)}(a_{r+|k|-3,s+2}^3+a_{r+|k|-2,s+1}^2)  \\
   &+\cdots -\cdots \\
   &+d^{l\mathrm{sgn}(k)}(d^{l\mathrm{sgn}(k)} a_{r+2,s+|k|-3}^2+a_{r,s+|k|-1}^3+a_{r,s+|k|-2}^{1,\mathrm{sgn}(k)}) \\
   &-d^{l\mathrm{sgn}(k)}(a_{r,s+|k|-1}^3+a_{r+1,s+|k|-2}^2 )\\
   &+(d^{l\mathrm{sgn}(k)}a_{r+1,s+|k|-2}^2+a_{r-1,s+|k|}^3+a_{r,s+|k|-1}^{1,\mathrm{sgn}(k)}).
\end{align*}
Thus we gate the conclusion.

(2) For each $k$, $j=0, \cdots , k-1$, set
\[ c_j^k:=\frac{\sqrt{3}}{3^{|k|/2}}\beta _{r+j,s+|k|-j-1}^{\mathrm{sgn}(k)}=3^{\frac{r+s}{2}}(d^{l\mathrm{sgn}(k)}a^2_{r+j+1,s+|k|-j-2}+a^3_{r+j-1,s+|k|-j}+a^{1,\mathrm{sgn}(k)}_{r+j,s+|k|-j-1}),\]
\[ d_j^k:=\frac{1}{3^{|k|/2}}\beta _{r+j,s+|k|-j}^0=3^{\frac{r+s}{2}}(a^3_{r+j-1,s+|k|-j}+a^2_{r+j,s+|k|-j-1}).\]
Then we have
\begin{align*}
\sum _{j=0}^{|k|-1}|c_j^k|^2+\sum _{j=1}^{|k|-1}|d^k_j|^2 &=3^{1-|k|}\sum _{j=0}^{|k|-1}|\beta _{r+j,s+k-j-1}^1|^2+3^{-|k|}\sum _{j=1}^{|k|-1}|\beta _{r+j,s+k-j}^0|^2 \\
                                                                                                      &\leq \frac{3}{4\cdot 3^{|k|}}\Biggl( \sum _{j=0}^{|k|-1}| \beta _{r+j,s+k-j-1}^1|^2 \|  \xi _{r+j,s+k-j-1}^1\| _2^2 +\sum _{j=1}^{|k|-1}| \beta _{r+j,s+k-j}^0|^2 \| \xi _{r+j,s+k-j}^0\| _2^2 \Biggr) \\
                                                                                                      &=\frac{3}{4\cdot 3^{|k|}}\| \sum _{j=0}^{|k|-1}\beta _{r+j,s+k-j-1}^1\xi _{r+j,s+k-j-1}^1 +\sum _{j=1}^{|k|-1}\beta _{r+j,s+k-j}^0\xi _{r+j,s+k-j}^0 \| _2^2 \\
                                                                                                      &\leq \frac{3}{4\cdot 3^{|k|}} \| \sum _{r,s,k}\beta _{r,s}^k\xi _{r,s}^k \| _2^2 \\
                                                                                                      &\leq \frac{3}{4\cdot 3^{|k|}}.
\end{align*}
Hence we have
\begin{align*}
\ &\sum _{|k|\geq k_0} |\beta _{r,s}^k |^2 \\
   &\leq \sum _{|k|\geq k_0}\Biggl( \bigl| |c^k_{k-1}|+|d^k_{k-1}|+\cdots +|c_1^k|+|d_1^k|+|c_0^k|\bigr| \Biggr) ^2 \\
   &\leq \sum _{|k|\geq k_0}\Biggl( (2|k|+1)\bigl| |c^k_{k-1}|^2+|d_{k-1}^k|^2 +\cdots +|c_1^k|^2\bigr| \Biggr) \\
   &\leq \sum _{|k|\geq k_0}\frac{3(2|k|+1)}{4\cdot 3^k} \\
   &\leq \frac{C}{2^{k_0}}.
\end{align*}
\end{proof}
Set
\[  L:=\overline{\mathrm{span}} \{ AW_l^{1, \alpha ,1}A, \ Av_i^{\pm l} A\mid i=1,2, \ l>0\} .\]
Summarizing the above results, we have the following.
\begin{lemm}
\label{3-11}
\textup{(See Lemma 3.2 of Cameron--Fang--Ravichandran--White \cite{CFRW})}
There exists a  sequence of orthonormal vectors $\{ \xi _n\}_{n=1}^\infty$ satisfying the following conditions.

\textup{(1)} Each vector $\xi _n$ lies in $W_{l(n)}^{i(n)}$ for some $l(n)\geq 1$, $i(n)=0,2, (1, \alpha ,2), (1, \beta)$.

\textup{(2)} The subspaces $\mathrm{span}A\xi _nA$ \textup{(}$n \in \mathbf{N}$ \textup{)} are pairwise orthogonal in $L^2M$.

\textup{(3)} We have 
\[ L^2M\ominus (L^2A\oplus L) =\bigoplus \overline{\mathrm{span}}A\xi _n A.\]

\textup{(4)} For any $n$ with $l(n)>1$, the sequence $\{ (\xi _n)_{r,s}/ \| (\xi _n)_{r,s} \| _2\}$ is an orthonormal basis of the subspace $\overline{\mathrm{span}}A\xi _nA$.

\textup{(5)} For each $n,m>0$, there exists a bounded invertible operator $T_{n,m}$ from the subspace $\overline{\mathrm{span}}A\xi _n^0A$ to $\overline{\mathrm{span}} A\xi _m^0A$ defined by $(\xi_n^0) _{r,s} \mapsto (\xi _m^0) _{r,s}$. 
Furthermore, there exists a constant $C_0$ satisfying $|T_{n,m}| , |T_{i,j}^{-1}|\leq C_0$ for any $n,m$.

\textup{(6)} The subspace $L$ is contained in the subspace $\bigoplus _{r,s\geq 0, \ k,l \in \mathbf{Z}, \ i=1,2}\mathbf{C}\xi _{r,s}^{i,l,k}$.
\end{lemm}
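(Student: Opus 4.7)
The plan is to build $\{\xi_n\}$ by concatenating orthonormal bases of the subspaces
\[ H_l := (W_l^0 \ominus S_l^0) \oplus ((W_l^{1,\alpha,2} \oplus W_l^{1,\beta}) \ominus S_l^1) \oplus (W_l^2 \ominus S_l^2) \]
as $l$ ranges over $\mathbf{Z}_{\geq 1}$. When $l = 1$, I would additionally arrange that each basis vector of $W_1^0 \ominus S_1^0$ has the form $c_1(u_1 + \epsilon u_1^{-1}) + c_2(u_2 + \epsilon u_2^{-1})$ for some fixed sign $\epsilon \in \{\pm 1\}$; this puts Lemmas \ref{3-5}(2), \ref{3-6}(2) and \ref{3-8}(2) into play. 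Condition (1) then holds by construction, and (2) is immediate from Lemma \ref{3-9}(2).

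For (3), the family $\{W_l\}_{l \geq 0}$ partitions $L^2M$ orthogonally, and Lemma \ref{3-9}(1) identifies $S_l$ with $\mathrm{ran}(P_{l-1}) \cap W_l$. An induction on $l$ combined with the refinement of $W_l^1 \ominus S_l^1$ from Lemma \ref{7-4} shows that $\mathrm{ran}(P_l)$ is the closed span of $L^2A$ (seeded by $W_0 = \mathbf{C}1$), the subspaces $A\xi A$ for $\xi$ ranging over the chosen basis vectors at levels $\leq l$, and the subspaces $A\eta A$ for $\eta$ drawn from $W_k^{1,\alpha,1} \oplus \mathbf{C}v_1^{\pm k} \oplus \mathbf{C}v_2^{\pm k}$ with $k \leq l$, which live in $L$ by definition. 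The orthogonality of $A\xi_n A$ with $L^2A$ uses $\xi_n \perp \chi_k$ for every $k$ together with the abelianness of $A$: for $a,b,c \in A$, $\langle a\xi_n b, c\rangle = \tau(\xi_n^* a^* c b^*) = 0$ since $a^* c b^* \in A$. The orthogonality of $A\xi_n A$ with $L$ comes from Lemma \ref{3-9}(3). Passing to closures yields (3).

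Condition (4) combines two inputs. Lemma \ref{3-6}(1) shows both that each $(\xi_n)_{r,s}$ is a finite linear combination of $\chi_r \xi_n \chi_s \in A\xi_n A$ and that every $\chi_r \xi_n \chi_s$ is a linear combination of the $(\xi_n)_{r,s}$; hence these vectors span $\overline{\mathrm{span}}\, A\xi_n A$. Lemma \ref{3-8}(1) then shows they are pairwise orthogonal with $\|(\xi_n)_{r,s}\|_2 = 3^{(r+s)/2}\|\xi_n\|_2$, so after normalization they form an orthonormal basis. For (5), since both source and target are presented in the orthonormal bases $\{(\xi_n^0)_{r,s}/3^{(r+s)/2}\}_{r,s}$ and $\{(\xi_m^0)_{r,s}/3^{(r+s)/2}\}_{r,s}$ indexed by the same set, the prescription $(\xi_n^0)_{r,s} \mapsto (\xi_m^0)_{r,s}$ extends by linearity to a surjective isometry, so $C_0 = 1$ suffices. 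Condition (6) is a direct consequence of Lemma \ref{8}: it places $A\gamma_j^{i,l} A$ and $A v_i^{l-1} A$ inside the closed span of $\{(\gamma_j^{i,l})_{r,s}\} \cup \{v_i^{l-1}\}$, and unfolding the definition of $\xi_{r,s}^{i,l,k}$ (aided by Lemma \ref{5-1}) expresses these generators as complex linear combinations of the $\xi_{r,s}^{i,l,k}$.

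The main obstacle is the bookkeeping in (3): one must simultaneously track which pieces feed into $L^2A$, which into $L$, and which into $\bigoplus \overline{\mathrm{span}}\, A\xi_n A$, and verify both mutual orthogonality and completeness across these three pools. The delicate point is establishing $A\xi_n A \perp L^2A$ despite $\xi_n$ only being known to be orthogonal to the generators $\{\chi_k\}$; this succeeds precisely because $A$ is abelian, and would fail without that hypothesis.
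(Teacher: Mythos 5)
Your construction and your verification of conditions (1)--(4) and (6) follow essentially the same route as the paper: the $\xi_n$ enumerate orthonormal bases of the pieces $W_l^i\ominus S_l^i$ for $i=0,2,(1,\alpha,2),(1,\beta)$; condition (2) is Lemma \ref{3-9}(2); condition (4) comes from Lemmas \ref{3-6}/\ref{3-7} together with Lemma \ref{3-8}(1); and condition (3) rests on Lemma \ref{3-9}(1),(3) plus the fact that every $w\in W_k$ is a combination of vectors $\chi_n\gamma\chi_m$ with $\gamma$ in the chosen orthogonal complements. Your inductive bookkeeping for (3) is a constructive rephrasing of the paper's ``take $\xi$ orthogonal to everything and show $\xi=0$'' argument, and both rest on the same lemmas. (A small aside: the orthogonality $A\xi_nA\perp L^2A$ uses traciality and the fact that $A$ is a subalgebra, namely $bc^*a\in A$; abelianness of $A$ plays no role there.)

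The one step that fails is your claim in (5) that $C_0=1$ suffices because source and target ``are presented in orthonormal bases.'' That is false whenever $l(n)=1$ or $l(m)=1$: for $\xi=c_1(u_1+\epsilon u_1^{-1})+c_2(u_2+\epsilon u_2^{-1})$, Lemma \ref{3-8}(2) gives $\langle\xi_{r,s},\xi_{r',s'}\rangle=3^{r+s}(-3)^{-|r-r'|}\|\xi\|_2^2$ whenever $r+s=r'+s'$, which is nonzero for $r\neq r'$; so the normalized vectors $(\xi_n^0)_{r,s}/3^{(r+s)/2}$ at level $1$ are \emph{not} mutually orthogonal --- this is exactly why condition (4) is restricted to $l(n)>1$. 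Consequently $T_{n,m}$ is not an isometry when exactly one of the two levels equals $1$, and one must instead verify, as in Lemma 3.2 of \cite{CFRW}, that the Gram matrix of this family is uniformly bounded and boundedly invertible (a Riesz system with constants independent of $n$), which is what produces the constant $C_0$. Your own setup already invokes Lemma \ref{3-8}(2) for the level-one basis vectors, so this is an internal inconsistency rather than a missing tool; but as written the argument for (5) does not go through.
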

\begin{proof}
This is shown by the same argument as that of the proof of Lemma 3.2 of Cameron--Fang--Ravichandran--White \cite{CFRW}.
However, for readers' convenience, we present a proof.
For each $i=0,2,(1, \alpha ,2 ), (1, \beta)$, $l\geq 1$, choose an orthonormal basis $\{ \eta _k^{l,i}\}_{k=1}^{K_l}$ of $W_l^i\ominus S_l^i$.
Let $\{ \xi _n\} $ be a rearrangement of 
\[ \{ (\eta _k^{l,i})\}_{l\geq 1, \ i=0,2,(1,\alpha ,2 ), (1, \beta) \ k=1, \cdots , K_l}.\]
Then by construction, the sequence $\{ \xi _n\}$ satisfies condition (1).
By Lemma \ref{3-9} (2), the sequence $\{ \xi _n\}$ satisfies condition (2).

We show that $\{ \xi _n \}$ satisfies condition (4).
By Lemma \ref{3-7}, the set $\{ (\xi _n)_{r,s}\}$ spans $A\xi_n A$.
By condition (2) and Lemma \ref{3-8}, the vectors $\{  ( \xi _n)_{r,s}\}_{n,r,s} $ are mutually orthogonal if $l(n)>1$.
Thus we have condition (4).

Next, we show that the sequence $\{ \xi _n\}$ satisfies condition (3).
By Lemma \ref{3-9} (3), $L$ is orthogonal to $\bigvee \overline{\mathrm{span}}A\xi _nA$.
By Lemma \ref{3-9} (2), the subspaces $A\xi _nA$'s are mutually orthogonal.
Thus it is enough to show that the set $\{ A\xi _nA\}_n$ really spans $L^2M\ominus (L^2A\oplus L)$.
Take an element $\xi \in W_l\ominus ( L^2A \oplus L)$ which is orthogonal to any $A\xi _nA$.
Then $\xi$ is orthogonal to the space $W_l\ominus S_l$, which means that $\xi \in S_l$.
On the other hand, by the same argument as in the proof of Lemma \ref{3-9} (1), any vector $w\in W_k$  is written as a linear combination of the form $\chi _n \gamma \chi _m$ for some $n,m\geq 0$, $\gamma \in  W_{k'}\ominus S_{k'}$ ($k'\leq k$).
Thus the vector $\xi $ is orthogonal to $AwA$ for any $w\in W_k$, $k\leq l-1$.
Hence by Lemma \ref{3-9} (1), the vector $\xi $ is orthogonal to $S_l$. 
Thus the vector $\xi$ is zero.

Condition (5) follows in the same way as the proof of Lemma 3.2 of Cameron--Fang--Ravichandran--White \cite{CFRW}.
Condition (6) is trivial.
\end{proof}

By an immediate consequence of Lemma \ref{3-11}, we have the following.

\begin{cor}
\label{cor-3}
We have the following.

\textup{(1)} The family $\{ (\xi _m)_{r,s}\mid m \in \mathbf{N}, r,s \geq 0\}$ is a Riesz basis of $L^2M\ominus (A\oplus L)$.

\textup{(2)} We have 
\[ L\subset \bigoplus _{i,l,k,r,s}\xi _{r,s}^{i,l,k}.\]

\textup{3)} We have $L\oplus (L^2M\ominus (A\oplus L))=L^2M\ominus A$.
\end{cor}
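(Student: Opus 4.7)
The plan is to deduce all three parts directly from Lemma~\ref{3-11}, with part~(1) being the only one that requires any real assembly.

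For part~(3), the key observation is $L\perp L^2A$. Indeed $A=\{\chi_1\}''$ sits in $\bigoplus_{l\ge 0}W_l^{0}$ (iterate $\chi_l\chi_1=\chi_{l+1}+3\chi_{l-1}$ starting from $\chi_1\in W_1^{0}$), while the generators $\chi_n w\chi_m$ of $L$ with $w\in W_l^{1,\alpha,1}$ or $w=v_i^{\pm l}$ stay in the $v_i$-containing pieces by Lemma~\ref{8}. Combining $L\perp L^2A$ with the decomposition $L^2M\ominus(L^2A\oplus L)=\bigoplus_n\overline{\mathrm{span}}\,A\xi_n A$ of Lemma~\ref{3-11}(3) gives $L^2M\ominus A=L\oplus(L^2M\ominus(A\oplus L))$. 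Part~(2) is literally Lemma~\ref{3-11}(6).

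For part~(1), I would start from the observation that Lemma~\ref{3-7} applied to each $\xi_m$ already shows $\{(\xi_m)_{r,s}\mid r,s\ge 0\}$ spans $\overline{\mathrm{span}}\,A\xi_m A$; combined with part~(3), this gives spanning of $L^2M\ominus(A\oplus L)$ by the whole family. The subspaces $\overline{\mathrm{span}}\,A\xi_n A$ are mutually orthogonal by Lemma~\ref{3-11}(2), so it suffices to establish a Riesz-type estimate on each summand with constants independent of $n$ and then to sum. For indices $n$ with $l(n)>1$ the family normalises to an orthonormal basis by Lemma~\ref{3-11}(4), so this case is immediate. For the remaining $l(n)=1$ summands, where Lemma~\ref{3-8}(2) shows that the vectors fail to be orthogonal (with cross-terms of order $(-3)^{-|n-n'|}$), I would transport the Riesz property from some fixed $l(m)>1$ summand via the uniformly bounded invertible operator $T_{n,m}$ of Lemma~\ref{3-11}(5), using the common bound $C_0$ on $\|T_{n,m}\|$ and $\|T_{n,m}^{-1}\|$ to keep the resulting constants uniform in $n$.

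The only delicate point is thus the $l(n)=1$ regime; Lemma~\ref{3-11}(5) is designed precisely to address it by comparing the problematic summand with a well-behaved one while preserving the norm bounds uniformly, so once the local Riesz estimates are in hand the assembly across $n$ is automatic.
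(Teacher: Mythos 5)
Your proposal is correct and follows the same route as the paper, which simply records this corollary as an immediate consequence of Lemma~\ref{3-11}; your derivation of (2) from Lemma~\ref{3-11}(6), of (3) from Lemma~\ref{3-11}(3) together with $L\perp L^2A$, and of (1) from Lemma~\ref{3-11}(2),(4),(5) is exactly the intended assembly. The only extra content you supply is the explicit handling of the $l(n)=1$ summands via the uniformly bounded operators $T_{n,m}$, which is precisely what Lemma~\ref{3-11}(5) is there for.
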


\section{Locating the support of the sequences of $(M^\omega\ominus A^\omega )\cap A'$}
Now, we would like to explain how to show the maximal amenability of the subalgebra.
In order to show the maximal amenability, we look at the following notion.
\begin{defi}
\textup{(Lemma 2.1 of Popa \cite{P})}
Let $M$ be a factor of type $\mathrm{II}_1$ and $A$ be a von Neumann subalgebra of $M$.
We say that the subalgebra $A$ has the asymptotic orthogonality property if for any $x^1=(x_n^1), x^2=(x_n^2)\in (M^\omega \ominus A^\omega )\cap A'$, any $y_1,y_2\in M \ominus A$, we have $\tau ^\omega (y_1^*{x^1}^*y_2x^2)=0$.
\end{defi}
Although the definition of the asymptotic orthogonality property is rather technical, this property is crucial because of the following proposition.
\begin{prop}
\textup{(Corollary 2.3 of Cameron--Fang--Ravichandran--White \cite{CFRW}, See also Popa \cite{P})}
\label{6-13}
Let $A$ be a singular maximal abelian subalgebra of a factor  $M$ of type $\mathrm{II}_1$ with the asymptotic orthogonality property.
Then it is maximal amenable.
\end{prop}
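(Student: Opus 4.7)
The plan is to argue by contradiction. Assume $A$ is a singular MASA of $M$ with the asymptotic orthogonality property, and let $B$ be an amenable intermediate von Neumann subalgebra $A \subseteq B \subseteq M$ with $B \neq A$. Choose $b \in B$ with $E_A(b) = 0$ and $b \neq 0$; it suffices to derive a contradiction.

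The first main step is to extract from the amenability of $B$ a nonzero element $x \in (M^\omega \ominus A^\omega) \cap A'$. Connes' characterization of amenability produces a sequence of contractions $u_n \in B$ with $\|u_n y - y u_n\|_2 \to 0$ for every $y \in B$, while $\|u_n\|_2$ stays bounded away from zero. The class $u = (u_n) \in M^\omega$ then lies in $B' \cap M^\omega$, hence in $A' \cap M^\omega$ since $A \subseteq B$, so $x := u - E_{A^\omega}(u)$ is an element of $(M^\omega \ominus A^\omega) \cap A'$. The nontrivial point is to arrange $x \neq 0$, and this is exactly where the singularity of $A$ enters: if every amenable central sequence in $B$ were absorbed into $A^\omega$, then a Dixmier-type averaging using such sequences would yield a $B$-bimodular conditional expectation of $B$ onto $A$, which in combination with the singularity $\mathcal{N}_M(A)'' = A$ would force $B \subseteq A$, contradicting our choice of $b$.

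The second main step is the direct application of AOP. Take $x^1 = x^2 = x$ as above and $y_1 = y_2 = b \in M \ominus A$. The AOP yields $\tau^\omega(b^* x^* b x) = 0$. On the other hand, because $u = (u_n)$ commutes with $b \in B$ in $M^\omega$, the commutator $[b, x] = -[b, E_{A^\omega}(u)]$ is controlled by elements of $A^\omega$; after rearranging using the trace property and the fact that $E_A(b) = 0$, the quantity $\tau^\omega(b^* x^* b x)$ reduces modulo vanishing pieces to $\tau(b^* b)\, \tau^\omega(x^* x)$, which is strictly positive. This contradiction gives $B = A$, proving the proposition.

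The main obstacle is the first step: extracting a nonzero class in $(M^\omega \ominus A^\omega) \cap A'$ from the amenability of $B$. Bare amenability only delivers central sequences in $B' \cap M^\omega$, which in principle could sit entirely inside $A^\omega$ and hence be useless for applying AOP. Ruling this out requires combining amenability of $B$ with the singularity of $A$ through a Dixmier-type averaging argument, and this is the conceptual heart of Popa's original proof in \cite{P} and its reformulation in \cite{CFRW}.
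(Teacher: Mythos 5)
Your proposal has the right overall shape (and, to be clear, the paper itself offers no proof of this proposition, only the citation to Popa and to Corollary 2.3 of \cite{CFRW}): one must extract a nonzero $x\in(M^\omega\ominus A^\omega)\cap A'$ from the amenability of an intermediate $B\supsetneq A$, feed $x^1=x^2=x$, $y_1=y_2=b$ with $b\in B\ominus A$ into the AOP, and contradict the positivity of $\tau^\omega(b^*x^*bx)$. But both of your main steps have genuine gaps. The first concerns how singularity enters. A $B$-bimodular conditional expectation of $B$ onto $A$ satisfies $E(b)=E(b\cdot 1)=bE(1)=b$ for every $b\in B$, so it exists only when $B=A$; the implication you try to draw from its existence is therefore vacuous, and ``Dixmier-type averaging over central sequences'' is not a defined operation here. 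In the actual argument one splits $B$ by a central projection (which lies in $Z(B)\subset A$, since $A$ is maximal abelian in $B$) into its type $\mathrm{I}$ and type $\mathrm{II}_1$ parts. Singularity is used \emph{only} to kill the type $\mathrm{I}$ part: a MASA in a finite type $\mathrm{I}$ algebra is regular, so that part would violate $\mathcal{N}_M(A)''=A$. On the type $\mathrm{II}_1$ part, no singularity is needed to produce $x$: by Connes' theorem $B'\cap B^\omega$ is a non-abelian (type $\mathrm{II}_1$) algebra, hence cannot be contained in the abelian $A^\omega$. Your sketch never isolates the type $\mathrm{I}$ case, and it cannot be repaired within your framework: for $B=A\rtimes(\mathbf{Z}/2\mathbf{Z})$ generated by $A$ and one normalizing unitary, every $B$-central sequence may well lie in $A^\omega$, so no usable $x$ exists and only the regularity/singularity argument excludes such $B$.

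The second gap is the final computation. With $x=u-E_{A^\omega}(u)$, the cross terms $\tau^\omega\bigl(b^*u^*b\,E_{A^\omega}(u)\bigr)$ and $\tau^\omega\bigl(b^*E_{A^\omega}(u)^*bu\bigr)$ have no visible reason to vanish: $E_{A^\omega}(u)$ lies in $A^\omega$ but need not commute with $b$, and $b^*b\notin A$ in general, so the expectation cannot be slid past $b$. The standard fix is to produce $x\in B'\cap B^\omega$ with $E_{A^\omega}(x)=0$ \emph{exactly} (this uses the type $\mathrm{II}_1$ structure of $B'\cap B^\omega$ against the abelianness of $A^\omega$ and itself requires an argument), after which $b^*x^*bx=b^*b\,x^*x$ because $x,x^*\in B'$, and $\tau^\omega(b^*b\,x^*x)=\tau(b^*b)\,\tau^\omega(x^*x)>0$ because $c\mapsto\tau^\omega(cz)$ is a normal trace on $B$ for each $z\in B'\cap B^\omega$ (with the appropriate central decomposition when $B$ is not a factor). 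As written, neither the existence of a usable $x$ nor the positivity of $\tau^\omega(b^*x^*bx)$ is established, so the proof does not go through.
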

This is why we would like to show that the subalgebra has the asymptotic orthogonality property.
In order to achieve this, we first show that any sequence of $(M^\omega \ominus A^\omega )\cap A'$ eventually get out of the space  $\mathrm{span}\{ \xi _{r,s}^{i,l,k}, (\xi _n)_{r,s}\mid r\leq M \ \mathrm{or} \ s\leq M\}$ for any $M>0$.
Then we show that any vectors $\eta _1, \eta _2 $ orthogonal to the space and any vector $a,b, \in M\ominus A$,  the value $|\tau (a*\eta _1^*b\eta _2)|$ is small if $M$ is large enough.
In this section, we show the first part.

As in Section 3, set
\[ \tilde{\chi }_1 :=\frac{1}{\sqrt{3}} (u_1+u_1^{-1}+u_2+u_2^{-1}).\]
Then we have the following lemma.
\begin{lemm}
\textup{(See Lemma 4.3 of Cameron--Fang--Ravichandran--White \cite{CFRW}, See also Lemma 11 of Wen \cite{W})}
\label{5-0}
Let $D\subset A$ be a diffuse von Neumann subalgebra.
Let $x=(x_n) $ be an $\omega$-centralizing sequence of $M$ commuting with $D$, $\| x_n \| =1 $ and $E_{A}(x_n)=0$ for all $n$.
Assume that each $x_n$ is written as $x_n =\sum _{m,r,s} \alpha _{r,s}^{n,m}(\xi _m)_{r,s}$ for some $\alpha _{r,s}^{n,m}\in \mathbf{C}$.
Then for each $M\in \mathbf{N}$, we have
\[ \lim _{n \to \omega}\sum _{ m\geq 1, r\leq M \ \mathrm{or} \ s\leq M}\| \alpha _{r,s}^{n,m}\| _2^2=0.\]
\end{lemm}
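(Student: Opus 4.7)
I would begin by identifying the action of $A$ on each column of the Riesz basis with a concrete model on $\ell^2(\mathbf{N})\otimes\ell^2(\mathbf{N})$. By Lemma \ref{3-3} combined with the norm formulas of Lemma \ref{3-8}(1), the rescaled vectors $\hat{e}^m_{r,s}:=(\xi_m)_{r,s}/\|(\xi_m)_{r,s}\|_2$ form an orthonormal basis of each column with $l(m)\geq 2$, on which left and right multiplication by $\tilde{\chi}_1$ act as the nearest-neighbour shifts $\hat{e}^m_{r,s}\mapsto\hat{e}^m_{r+1,s}+\hat{e}^m_{r-1,s}$ and $\hat{e}^m_{r,s}\mapsto\hat{e}^m_{r,s+1}+\hat{e}^m_{r,s-1}$ respectively (with the natural boundary corrections at $r=0$ or $s=0$, and an additional small term for $l(m)=1$ coming from Lemma \ref{3-5}(2)). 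Using the uniformly bounded intertwiners of Lemma \ref{3-11}(5) to transfer the $l(m)=1$ columns onto a reference column, each column becomes identified with $\ell^2(\mathbf{N})\otimes\ell^2(\mathbf{N})$ in such a way that $(L_{\tilde{\chi}_1},R_{\tilde{\chi}_1})$ corresponds to the commuting pair $(T\otimes I,I\otimes T)$, where $T$ is the Jacobi matrix of the half-line walk. The spectral model of $T$ is multiplication by $\lambda$ on $L^2([-2,2],d\sigma)$ with $d\sigma$ the semicircle law, so that $\delta_r\leftrightarrow U_r$ for the Chebyshev polynomials $U_r$ of the second kind.

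\textbf{Spectral reformulation of commutation.} Since $D\subset A=\{\tilde{\chi}_1\}''$ is diffuse, it contains a Haar unitary $d$, and by Borel functional calculus we may write $d=f(\tilde{\chi}_1)$ for an essentially injective measurable $f:[-2,2]\to S^1$ pushing $d\sigma$ forward to the uniform measure on the circle. Projecting the hypothesis $\|[d,x_n]\|_2\to 0$ onto column $m$ and transporting through the spectral isomorphism, the column-$m$ component $y^{(m)}_n$, viewed in $L^2([-2,2]^2,d\sigma\otimes d\sigma)$ through the Chebyshev basis $\{U_r\otimes U_s\}$, satisfies $\|(f(\lambda_1)-f(\lambda_2))y^{(m)}_n\|_{L^2}\to 0$. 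Consequently, along the ultrafilter, the $y^{(m)}_n$ become spectrally concentrated near the diagonal $\{\lambda_1=\lambda_2\}\subset[-2,2]^2$, uniformly in $m$ up to the bounded intertwiner constants.

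\textbf{Spatial decay and main obstacle.} The projection $e_M$ onto $\overline{\mathrm{span}}\{(\xi_m)_{r,s}:r\leq M\text{ or }s\leq M\}$ corresponds, column by column, to the union of the two ``low-degree'' subspaces $\mathcal{P}^r_M:=\{y:\deg_{\lambda_1}y\leq M\}$ and $\mathcal{P}^s_M:=\{y:\deg_{\lambda_2}y\leq M\}$ of $L^2([-2,2]^2,d\sigma\otimes d\sigma)$. The final step is a quantitative transversality estimate of the form
\[\|(f(\lambda_1)-f(\lambda_2))y\|_2\geq c(M,f)\|y\|_2\qquad\text{for all }y\in\mathcal{P}^r_M\cup\mathcal{P}^s_M,\]
with $c(M,f)>0$ independent of the column index $m$; combined with the bound from the previous step this yields $\|e_Mx_n\|_2\to 0$, which via the Riesz basis constants of Lemma \ref{3-11}(1) gives the stated decay of the coefficients. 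The hard part is precisely this spectral-transversality estimate, uniform in the column: for $y=\sum_{r\leq M}g_r(\lambda_2)U_r(\lambda_1)$ one must show that projecting $f(\lambda_1)y$ onto $\lambda_1$-degrees greater than $M$ preserves a definite fraction of $\|y\|_2$, which amounts to ruling out that $f$ approximately coincide, on $\mathrm{span}(U_0,\ldots,U_M)$, with a polynomial of degree at most $M$ — this is incompatible with $f$ being a Haar unitary image, but the quantitative version requires either careful Chebyshev--Fourier decay estimates, or else a judicious choice of $d\in D$ whose explicit shift expansion (in the style of Lemma \ref{5-1}) reduces the problem to a combinatorial path count on the quadrant $\mathbf{N}^2$ that avoids the boundary strip of width $M$.
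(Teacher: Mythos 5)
The paper gives no argument for this lemma beyond citing Lemma 4.3 of \cite{CFRW}, so I will judge your proposal on its own terms. Your overall framework is the right one and is close in spirit to the cited proofs: the columns $\overline{A\xi_mA}$ with $l(m)\geq 2$ are copies of the coarse $A$-$A$-bimodule, realized on $L^2([-2,2]^2,\sigma\otimes\sigma)$ via the Chebyshev basis, and the conclusion should come from the fact that vectors of bounded degree in one variable cannot concentrate near the level set $\{f(\lambda_1)=f(\lambda_2)\}$. But as written the proposal has genuine gaps. First, the step you yourself flag as ``the hard part'' is left unproved, and the lemma is not proved without it. It is in fact true and not hard: on the finite-dimensional space $\mathcal{P}_M$ of polynomials of degree at most $M$ the $L^\infty[-2,2]$ and $L^2(\sigma)$ norms are equivalent with a constant $C_M$, and since $d$ is a Haar unitary the pushforward $f_*\sigma$ is Haar measure, so $\sigma(\{\lambda_1:|f(\lambda_1)-z|<\delta\})\leq C\delta$ uniformly in $z$; slicing in $\lambda_2$ then gives $\int_{\{|f(\lambda_1)-f(\lambda_2)|<\delta\}}|y|^2\leq CC_M^2\delta\|y\|_2^2$ for $y\in\mathcal{P}^r_M$, which is the quantitative transversality you need. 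Note in passing that your claim that $f$ is essentially injective is false when $D\subsetneq A$; what saves you is only that $f_*\sigma$ is non-atomic, which is exactly what the above uses.

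Second, and more seriously, the logical step ``concentration of $x_n$ near the diagonal $+$ transversality on $\mathrm{ran}(e_M)$ $\Rightarrow$ $\|e_Mx_n\|_2\to 0$'' is not valid as stated: the estimate $\|(f(\lambda_1)-f(\lambda_2))y\|_2\geq c(M,f)\|y\|_2$ applies to $y\in\mathrm{ran}(e_M)$, but $x_n$ does not lie in $\mathrm{ran}(e_M)$, and $e_M$ does not commute with multiplication by $f(\lambda_1)-f(\lambda_2)$, so you cannot apply the estimate to $e_Mx_n$ nor split $\|(f(\lambda_1)-f(\lambda_2))x_n\|_2^2$ over $e_M$ and $1-e_M$. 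The standard repair is Popa's averaging trick: from $\|[d,x_n]\|_2\to 0$ one gets $\|e_Mx_n\|_2^2\approx\langle x_n,\tfrac1K\sum_{k=1}^Kd^{-k}(e_Mx_n)d^k\rangle$, and then one bounds $\|\tfrac1K\sum_kd^{-k}\eta d^k\|_2$ for $\eta\in\mathrm{ran}(e_M)$ by splitting the integral over $\{|f(\lambda_1)-f(\lambda_2)|<\delta\}$ (controlled by the slicing estimate above) and its complement (where the Ces\`aro average of $(\overline{f(\lambda_1)}f(\lambda_2))^k$ is $O(1/(K\delta))$). Finally, your treatment of the $l(m)=1$ columns is too quick: by Lemma \ref{3-5}(2) the boundary correction there shifts the \emph{other} index, so the left and right actions are not of the product form $(T\otimes I,I\otimes T)$ and the intertwiners of Lemma \ref{3-11}(5) are not bimodule maps; since there are only finitely many such columns they must be handled by a separate (but similar) direct computation rather than by ``transfer''.
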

\begin{proof}
This is shown by the same way as that of Lemma 4.3 of Cameron--Fang--Ravichandran--White \cite{CFRW}.
\end{proof}

  By Lemma \ref{5-1}, we have
   \begin{align*}
\ & \tilde{\chi}_1 \xi _{r,s}^k-\xi _{r,s}^k \tilde{\chi }_1 \\
    &=\begin{cases}
    \xi _{r+1,s}^k +\xi _{r-1,s}^k-\xi _{r,s+1}^k -\xi _{r,s-1}^k  & (r, s \geq 1) \\
    \xi _{1,s}^k +\frac{1}{\sqrt{3}}(d^l\xi _{0,s}^{k+1}+d^{-l}\xi _{0,s}^{k-1})-\xi _{0,s+1}^k -\xi _{0,s-1}^k  & (r=0, s \geq 1) \\
   \xi _{r+1,0}^k +\xi _{r-1,0}^k - \xi _{r,1}^k -\frac{1}{\sqrt{3}}(\xi _{r,0}^{k+1}+\xi _{r,0}^{k-1}) & (r\geq 1, s=0) \\
    \xi _{1,s}^k +\frac{1}{\sqrt{3}}(d^l\xi _{0,s}^{k+1}+d^{-l}\xi _{0,s}^{k-1})-\xi _{r,1}^k -\frac{1}{\sqrt{3}}(\xi _{r,0}^{k+1}+\xi _{r,0}^{k-1}) & (r=s=0) .
  \end{cases}
  \end{align*}
  Hence for $x=\sum _{i,l,r,s,k} \alpha _{r,s}^{i,l,k} \xi _{r,s}^{i,l,k}$, where $\alpha _{r,s}^{i,l,k} \in \mathbf{C}$ for each $r,s,i,l,k$,  write
  \[ \tilde{\chi}_1x-x\tilde{\chi}_1=\sum _{r,s,i,l,k}\beta _{r,s}^{i,l,k}\xi _{r,s}^{i,l,k}.\]
  Then the complex number $\beta _{r,s}^k$ is the following.
  \[ \beta_{r,s}^{i,l,k}=\begin{cases}
  \alpha _{r+1,s}^{i,l,k} +\alpha _{r-1,s}^{i,l,k} -\alpha _{r,s+1}^{i,l,k}-\alpha _{r,s-1}^{i,l,k} &(r,s\geq 1) \\
  \alpha _{1,s}^{i,l,k }-\alpha _{0,s-1}^{i,l,k} -\alpha _{0,s+1}^{i,l,k} +\frac{1}{\sqrt{3}} (d^l\alpha _{0,s}^{i,l,k-1}+d^{-l}\alpha _{0,s}^{i,l,k+1} ) & (r=0, s\geq 1) \\
  \alpha _{r-1,0}^{i,l,k}+\alpha _{r+1,0}^{i,l,k} -\alpha _{r,1}^{i,l,k} -\frac{1}{\sqrt{3}}(\alpha _{r,0}^{i,l,k-1}+\alpha _{r,0}^{i,l,k+1}) & (r\geq 1, s=0) \\
  \alpha _{1,0}^{i,l,k}-\alpha _{0,1}^{i,l,k} +\frac{1}{\sqrt{3}} (d^l-1)(\alpha _{0,0}^{i,l,k-1}-d^{-l}\alpha _{0,0}^{i,l,k+1}) &(r=s=0).
  \end{cases}
  \]
  \begin{lemm}
  \label{5-2}
  For $x=\sum \alpha _{r,s}^{i,l,k}\xi _{r,s}^{i,l,k}$ with $\| x\| _2=1$, $s'\geq s\geq 1$, we have the following inequalities.
  \begin{align*}
  \Biggl( \sum _{r\geq s', i,l,k} &| \alpha _{r-s,0}^{i,l,k} +\alpha _{r-s+2,0}^{i,l,k} +\cdots +\alpha _{r+s,0}^{i,l,k} -\frac{1}{\sqrt{3}}\left( \alpha _{r-s+1,0}^{i,l,k-1}+\cdots + \alpha _{r+s-1, 0}^{i,l,k-1} \right) \\
                                                                      &-\frac{1}{\sqrt{3}}\left( \alpha _{r-s+1,0}^{i,l,k+1}+\cdots + \alpha _{r+s-1,0}^{i,l,k+1}\right) |^2  \biggr) ^{1/2} -\Biggl( \sum _{r\geq s' , i,l,k} |\alpha _{r,s}^{i,l,k }|^2 \Biggr) ^{1/2} \\
                                                                      &\leq \Biggl( \sum _{r\geq s, i,l,k} | \alpha _{r,s}^{i,l,k}  -\left(\alpha _{r-s,0}^{i,l,k}+\cdots + \alpha _{r+s,0}^{i,l,k} \right) \\
                                                                      &+\frac{1}{\sqrt{3}}\left( \alpha _{r-s+1,0}^{i,l,k-1}+\cdots +\alpha _{r+s-1,0}^{i,l,k-1}\right) +\frac{1}{\sqrt{3}} \bigl( \alpha _{r-s+1,0}^{i,l,k+1}+\cdots + \alpha _{r+s-1,0}^{i,l,k+1}\bigr) |^2\Biggr) ^{1/2} \\
                                                                      &\leq 3^{s-1}C_0 \| [x, \tilde{\chi}_1]\| _2.
\end{align*}
  \end{lemm}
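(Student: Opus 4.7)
The plan is to unfold the commutator recursion to write each $\alpha_{r,s}^{i,l,k}$ (with $s\geq 1$) as an explicit linear combination of boundary values $\alpha_{r',0}^{i,l,k'}$ with $|k'-k|\leq 1$ and $|r'-r|\leq s$, plus an error controlled by the $\beta$'s. To this end define the candidate main term
\[
P_{r,s}^{i,l,k}:=\sum_{j=0}^{s}\alpha_{r-s+2j,0}^{i,l,k}-\frac{1}{\sqrt{3}}\sum_{j=0}^{s-1}\bigl(\alpha_{r-s+1+2j,0}^{i,l,k-1}+\alpha_{r-s+1+2j,0}^{i,l,k+1}\bigr),
\]
which is precisely the quantity appearing inside the first $|\cdot|^2$ on the left-hand side of the statement, and let $E_{r,s}^{i,l,k}:=\alpha_{r,s}^{i,l,k}-P_{r,s}^{i,l,k}$. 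A routine check of the index ranges (splitting the $\alpha^{i,l,k}$ and $\alpha^{i,l,k\pm 1}$ contributions and reindexing) shows that $P$ satisfies the noiseless recursion $P_{r,s+1}^{i,l,k}=P_{r+1,s}^{i,l,k}+P_{r-1,s}^{i,l,k}-P_{r,s-1}^{i,l,k}$ for $r,s\geq 1$, with $P_{r,0}^{i,l,k}=\alpha_{r,0}^{i,l,k}$ and $P_{r,1}^{i,l,k}=\alpha_{r,1}^{i,l,k}+\beta_{r,0}^{i,l,k}$ (the last identity being the $s=0$, $r\geq 1$ case of the displayed $\beta$-formula solved for $\alpha_{r,1}^{i,l,k}$).

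Combining this with the $r,s\geq 1$ case of the $\beta$-formula, $E$ obeys $E_{r,0}^{i,l,k}=0$, $E_{r,1}^{i,l,k}=-\beta_{r,0}^{i,l,k}$, and the forced recursion $E_{r,s+1}^{i,l,k}=E_{r+1,s}^{i,l,k}+E_{r-1,s}^{i,l,k}-E_{r,s-1}^{i,l,k}-\beta_{r,s}^{i,l,k}$. A crucial point is that the restriction $r\geq s\geq 1$ keeps the iteration strictly away from the $r=0$ boundary, at which the $\beta$-recursion mixes different $k$'s; consequently no additional $k$-shifts appear beyond those already encoded in $P$. Setting $F_s:=\bigl(\sum_{r\geq s,\,i,l,k}|E_{r,s}^{i,l,k}|^2\bigr)^{1/2}$ and $G:=\bigl(\sum_{r,s,i,l,k}|\beta_{r,s}^{i,l,k}|^2\bigr)^{1/2}$, the triangle inequality yields $F_{s+1}\leq 2F_s+F_{s-1}+G$, and induction starting from $F_0=0$, $F_1\leq G$ gives $F_s\leq 3^{s-1}G$. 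By Lemma \ref{15} the family $\{\xi_{r,s}^{i,l,k}\}$ is orthogonal with uniformly bounded norms, and $[x,\tilde{\chi}_1]\in L\subset \bigoplus\mathbf{C}\xi_{r,s}^{i,l,k}$ (cf.\ Lemma \ref{3-11}(6)), so $G\leq C_0\|[x,\tilde{\chi}_1]\|_2$ for a suitable constant $C_0$.

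The outer inequality follows from the reverse triangle inequality applied in $\ell^2(\{r\geq s'\})$: since $\alpha=P+E$ as sequences,
\[
\Bigl(\sum_{r\geq s'}|P_{r,s}^{i,l,k}|^2\Bigr)^{1/2}-\Bigl(\sum_{r\geq s'}|\alpha_{r,s}^{i,l,k}|^2\Bigr)^{1/2}\leq\Bigl(\sum_{r\geq s'}|E_{r,s}^{i,l,k}|^2\Bigr)^{1/2},
\]
and enlarging the summation range from $r\geq s'$ to $r\geq s$ (using $s'\geq s$) gives the middle quantity of the lemma, which is bounded by $F_s\leq 3^{s-1}C_0\|[x,\tilde{\chi}_1]\|_2$. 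The hard part is the first step: correctly guessing the closed-form $P$ and verifying its noiseless three-term recursion by a careful matching of the $\alpha^{i,l,k}$ and $\alpha^{i,l,k\pm 1}$ contributions on both sides. Once $P$ is pinned down, the remaining combinatorial induction is elementary, and the factor $3^{s-1}$ emerges naturally from the $2F_s+F_{s-1}+G$ recursion.
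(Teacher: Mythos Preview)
Your argument is correct and is exactly the approach the paper intends (it defers to Lemma~4.1 of \cite{CFRW}): you identify the closed form $P_{r,s}^{i,l,k}$, verify the clean three-term recursion for $P$, derive the forced recursion $E_{r,s+1}=E_{r+1,s}+E_{r-1,s}-E_{r,s-1}-\beta_{r,s}$ for the error (using $r\ge s$ to stay away from the $r=0$ boundary where $k$-mixing occurs), and run the induction $F_{s+1}\le 2F_s+F_{s-1}+G$ to get $F_s\le 3^{s-1}G$. One small cosmetic point: you don't need $[x,\tilde\chi_1]\in L$; the expansion $[x,\tilde\chi_1]=\sum\beta_{r,s}^{i,l,k}\xi_{r,s}^{i,l,k}$ follows directly from Lemma~\ref{5-1}, and then Lemma~\ref{15} gives the norm comparison $G\le C_0\|[x,\tilde\chi_1]\|_2$.
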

  \begin{proof}
 This is shown in a similar way to Lemma 4.1 of Cameron--Fang--Ravichandran--White \cite{CFRW}.
  \end{proof}
 Similarly, we have the following.
 \begin{lemm}
 \label{5-3}
 For $x=\sum \alpha _{,s}^{i,l,k}\xi _{r,s}^{i,l,k}$ with $\| x\| _2=1$, $r'\geq r\geq 1$, we have the following inequalities.
  \begin{align*}
  \Biggl( \sum _{s\geq r', i,l,k} &| \alpha _{0,s-r}^{i,l,k} +\alpha _{0,s-r+2}^{i,l,k} +\cdots +\alpha _{0,s+r}^{i,l,k} -\frac{d^l}{\sqrt{3}}\left( \alpha _{0, s-r+1}^{i,l,k-1}+\cdots + \alpha _{0,s+r-1}^{i,l,k-1} \right) \\
                                                                      &-\frac{d^{-l}}{\sqrt{3}}\left( \alpha _{0,s-r+1}^{i,l,k+1}+\cdots + \alpha _{0,s+r-1}^{i,l,k+1}\right) |^2  \biggr) ^{1/2} -\Biggl( \sum _{s\geq r' , i,l,k} |\alpha _{r,s}^{i,l,k} |^2 \Biggr) ^{1/2} \\
                                                                      &\leq \Biggl( \sum _{s\geq r, i,l,k} | \alpha _{r,s}^{i,l,k}  -\left(\alpha _{0,s-r}^{i,l,k}+\cdots + \alpha _{0,s+r}^{i,l,k} \right) \\
                                                                      &+\frac{d^l}{\sqrt{3}}\left( \alpha _{0,s-r+1}^{i,l,k-1}+\cdots +\alpha _{0,s+r-1}^{i,l,k-1}\right) +\frac{d^{-l}}{\sqrt{3}} \bigl( \alpha _{0,s-r+1}^{i,l,k+1}+\cdots + \alpha _{0, s+r-1}^{i,l,k+1}\bigr) |^2\Biggr) ^{1/2} \\
                                                                      &\leq 3^{r-1}C_0 \| [x, \tilde{\chi}_1]\| _2.
\end{align*}
 \end{lemm}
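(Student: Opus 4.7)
\textbf{Plan for the proof of Lemma \ref{5-3}.} The plan is to mirror the proof of Lemma \ref{5-2}, exchanging the roles of the indices $r$ and $s$. The starting point is the system of commutator identities for $\beta_{r,s}^{i,l,k}$ computed just before Lemma \ref{5-2}. In Lemma \ref{5-2} one iterates the $r\geq 1, s=0$ boundary identity together with the bulk identity to express $\alpha_{r,s}^{i,l,k}$ in terms of the $\alpha_{*,0}^{i,l,*}$ values; for Lemma \ref{5-3}, I would instead use the $r=0, s\geq 1$ boundary identity to express $\alpha_{r,s}^{i,l,k}$ in terms of the $\alpha_{0,*}^{i,l,*}$ values. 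The asymmetry between these two boundary formulas, namely the presence of $d^l$ and $d^{-l}$ on the $r=0$ row and their absence on the $s=0$ row, is precisely what forces the $d^{\pm l}$ weights to appear in Lemma \ref{5-3} but not in Lemma \ref{5-2}.

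The key recursive step is to rearrange the boundary identity: solving for $\alpha_{1,s}^{i,l,k}$ in the formula for $\beta_{0,s}^{i,l,k}$ yields
\[ \alpha_{1,s}^{i,l,k} = \alpha_{0,s-1}^{i,l,k} + \alpha_{0,s+1}^{i,l,k} - \frac{d^l}{\sqrt{3}}\alpha_{0,s}^{i,l,k-1} - \frac{d^{-l}}{\sqrt{3}}\alpha_{0,s}^{i,l,k+1} + \beta_{0,s}^{i,l,k}, \]
which is exactly the expression appearing in Lemma \ref{5-3} for $r=1$, up to an error term given by a single $\beta$. Iterating the bulk identity $\beta_{r,s}^{i,l,k} = \alpha_{r+1,s}^{i,l,k} + \alpha_{r-1,s}^{i,l,k} - \alpha_{r,s+1}^{i,l,k} - \alpha_{r,s-1}^{i,l,k}$ solved for $\alpha_{r+1,s}^{i,l,k}$, induction on $r$ expresses $\alpha_{r,s}^{i,l,k}$ as the signed combination of $\alpha_{0,s-r}^{i,l,k}, \alpha_{0,s-r+2}^{i,l,k},\dots,\alpha_{0,s+r}^{i,l,k}$ together with the $d^{\pm l}/\sqrt{3}$ corrections from each encounter with the left boundary, plus an accumulated error that is a linear combination of $\beta_{r',s'}^{i,l,k}$ with $r'<r$.

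The final step is the $\ell^2$ estimation. Each iteration of the bulk relation at most triples the number of inherited error terms, so after $r-1$ iterations the accumulated error, measured in $\ell^2$ over $(s,i,l,k)$ with $s\geq r'$, is bounded by $3^{r-1}$ times the $\ell^2$ norm of the family $\{\beta_{r',s'}^{i,l,k}\}$; the latter equals $\|\tilde\chi_1 x - x\tilde\chi_1\|_2$ up to the Riesz basis constant $C_0$ of Lemma \ref{3-11}(5) that converts $\ell^2$ coefficient norms in $\{(\xi_n)_{r,s}\}$ to genuine $L^2$ norms. The triangle inequality then gives the upper bound, and the reverse triangle inequality the lower bound against the $\ell^2$ sum of $|\alpha_{r,s}^{i,l,k}|^2$. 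The main obstacle I anticipate is purely bookkeeping: keeping track of the alternating signs, the precise shift pattern on the second index, and the $d^{\pm l}$ weights so that each $\alpha_{0,*}^{i,l,*}$ acquires the exact coefficient stated when multiple boundary visits contribute. No new analytic input is needed beyond Lemmas \ref{5-1} and \ref{3-11}; the content is combinatorial and parallel to Lemma \ref{5-2}, with the two axes interchanged and the deformation factors relocated to the other boundary.
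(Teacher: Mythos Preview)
Your proposal is correct and matches the paper's approach: the paper states Lemma \ref{5-3} with the preface ``Similarly, we have the following'' and gives no separate proof, indicating precisely that one repeats the argument of Lemma \ref{5-2} (itself a variant of Lemma 4.1 of \cite{CFRW}) with the roles of $r$ and $s$ interchanged. Your observation that the $d^{\pm l}$ weights enter through the $r=0$ boundary identity rather than the $s=0$ one is exactly the bookkeeping difference between the two lemmas.
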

 \begin{lemm}
 \label{5-4}
 For $x^n =\sum _{r,s,i,l,k}\alpha _{r,s}^{n,i,l,k} \xi _{r,s}^{i,l,k}$ with $\|x^n\|_2 =1$ and $\| [x^n, \tilde{\chi}_1]\|_2\to 0$ as $n\to \omega$, we have
 \[ \lim _{n \to \omega }\sum _{r\geq 0, \ i,l,k}|\alpha _{r,0}^{n,i,l,k}-\frac{1}{\sqrt{3}}(\alpha _{r+1,0}^{n,i,l,k-1}+\alpha _{r+1,0}^{n,i,l,k+1})|^2=0,\]
 \[ \lim _{n\to \omega } \sum _{s\geq 0, \ i,l,k}|\alpha _{0,s}^{n,i,l,k}-\frac{1}{\sqrt{3}}(d^l\alpha _{0,s+1}^{n,i,l,k-1}+d^{-l}\alpha _{0,s+1}^{n,i,l,k+1})|^2=0.\]
 \end{lemm}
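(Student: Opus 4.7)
Plan: My approach is to establish a telescoping identity that expresses the boundary combination $R_{N}^{i,l,k}:=\alpha_{N,0}^{n,i,l,k}-\tfrac{1}{\sqrt{3}}(\alpha_{N+1,0}^{n,i,l,k-1}+\alpha_{N+1,0}^{n,i,l,k+1})$---whose squared $\ell^{2}$-norm we must drive to zero---as a difference of the approximants that already appear in Lemma~\ref{5-2}. Let $F_{s}(r,i,l,k)$ denote the combination of $\alpha_{\cdot,0}^{n,i,l,k}$ and $\alpha_{\cdot,0}^{n,i,l,k\pm 1}$ sitting inside the absolute value in Lemma~\ref{5-2} (with $F_{0}(r,i,l,k):=\alpha_{r,0}^{n,i,l,k}$). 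Direct inspection of the indices shows the telescoping identity
\[ F_{s}(N+s,i,l,k)-F_{s-1}(N+s+1,i,l,k)=R_{N}^{i,l,k}\qquad(s\ge 1,\ N\ge 0), \]
because the $\alpha_{\cdot,0}^{k}$-contributions at $(N+2,0),\ldots,(N+2s,0)$ and the $\alpha_{\cdot,0}^{k\pm 1}$-contributions at $(N+3,0),\ldots,(N+2s-1,0)$ appear in both $F_{s}(N+s,\cdot)$ and $F_{s-1}(N+s+1,\cdot)$ with identical coefficients and cancel; only the leading terms $\alpha_{N,0}^{k}$ and $-\tfrac{1}{\sqrt{3}}\alpha_{N+1,0}^{k\pm 1}$ of $F_{s}(N+s,\cdot)$ survive.

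I would then invoke Lemma~\ref{5-2} to get the $\ell^{2}$-approximation $\|F_{s}(\cdot,i,l,k)-\alpha_{\cdot,s}^{n,i,l,k}\|_{\ell^{2}(r\ge s,i,l,k)}\le 3^{s-1}C_{0}\|[x^{n},\tilde{\chi}_{1}]\|_{2}$. Substituting into the telescoping identity and expanding $|a-b|^{2}\le 2|a|^{2}+2|b|^{2}$ twice gives, for every $s\ge 1$,
\[ \sum_{N\ge 0,\,i,l,k}|R_{N}^{i,l,k}|^{2}\le 4\|\alpha_{\cdot,s}^{n}\|_{2}^{2}+4\|\alpha_{\cdot,s-1}^{n}\|_{2}^{2}+C_{1}\cdot 9^{s-1}\|[x^{n},\tilde{\chi}_{1}]\|_{2}^{2}. \]
Since the vectors $\xi_{r,s}^{i,l,k}$ are pairwise orthogonal with squared norms at least $4$ (Lemma~\ref{15}), we have $\sum_{r,s,i,l,k}|\alpha_{r,s}^{n,i,l,k}|^{2}\le \tfrac{1}{4}\|x^{n}\|_{2}^{2}=\tfrac{1}{4}$ uniformly in $n$. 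Averaging the displayed inequality over $s=1,\ldots,S$ then yields
\[ \sum_{N,i,l,k}|R_{N}^{i,l,k}|^{2}\le \frac{2}{S}+\frac{C_{2}(9^{S}-1)}{S}\|[x^{n},\tilde{\chi}_{1}]\|_{2}^{2}. \]
For each fixed $S$ the hypothesis $\|[x^{n},\tilde{\chi}_{1}]\|_{2}\to 0$ makes the second term vanish as $n\to\omega$, so $\limsup_{n\to\omega}\sum|R_{N}^{n,i,l,k}|^{2}\le 2/S$; letting $S\to\infty$ concludes the first limit. The second limit follows by the same averaging scheme applied to the analogous identity $G_{r}(N+r,i,l,k)-G_{r-1}(N+r+1,i,l,k)=\alpha_{0,N}^{n,i,l,k}-\tfrac{d^{l}}{\sqrt{3}}\alpha_{0,N+1}^{n,i,l,k-1}-\tfrac{d^{-l}}{\sqrt{3}}\alpha_{0,N+1}^{n,i,l,k+1}$ derived from Lemma~\ref{5-3}, with the $d^{\pm l}$ factors propagating harmlessly.

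The main conceptual obstacle is the discovery of the telescoping identity. The commutator relation at $(r,0)$ only accesses $R_{N}^{i,l,k}$ mixed together with an $\alpha_{r+1,1}$-term, so there is no obvious route to estimating $R$ by itself; recognizing that Lemma~\ref{5-2}'s approximants $F_{s}$ encode two shifted copies of the desired sum, whose difference isolates exactly $R_{N}^{i,l,k}$, is what unlocks the argument. The averaging trick is then essentially forced: the $3^{s-1}$-blow-up in Lemma~\ref{5-2} cannot be beaten by any single choice of $s$, but the $1/S$-average of a geometrically growing error vanishes for fixed $S$ as $n\to\omega$, while the leading $O(1/S)$ term vanishes as $S\to\infty$ precisely because $\sum_{s}\|\alpha_{\cdot,s}^{n}\|_{2}^{2}$ is uniformly bounded.
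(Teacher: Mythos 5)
Your proof is correct. The paper itself gives no argument for this lemma beyond a pointer to Lemma~4.2 of Cameron--Fang--Ravichandran--White, so there is nothing to compare line by line; what you supply is a complete and valid implementation of that step using only ingredients already available here. I checked the key points: the telescoping identity $F_{s}(N+s,i,l,k)-F_{s-1}(N+s+1,i,l,k)=R_{N}^{i,l,k}$ does hold for the step-two sums appearing in Lemma~\ref{5-2} (the interior terms $\alpha_{N+2,0}^{k},\dots,\alpha_{N+2s,0}^{k}$ and $\alpha_{N+3,0}^{k\pm1},\dots,\alpha_{N+2s-1,0}^{k\pm1}$ cancel exactly, including the degenerate case $s=1$ with $F_{0}(r,i,l,k)=\alpha_{r,0}^{n,i,l,k}$); the index ranges $N+s\ge s$ and $N+s+1\ge s-1$ keep both differences inside the domain where Lemma~\ref{5-2} applies; the bound $\sum|\alpha_{r,s}^{n,i,l,k}|^{2}\le 1/4$ follows from Lemma~\ref{15}; and the averaging over $s=1,\dots,S$ (the same device the paper uses in the proof of Lemma~\ref{5-5}) correctly trades the $9^{s-1}$ blow-up against the uniformly summable row norms, giving $\limsup_{n\to\omega}\sum|R_{N}^{i,l,k}|^{2}\le 2/S$ for every $S$. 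The second limit indeed goes through verbatim from Lemma~\ref{5-3}, since the factors $d^{\pm l}$ have modulus one and are constant in the telescoped variable.
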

 \begin{proof}
 This is shown in a similar way to that in Lemma 4.2 of Cameron--Fang--Ravichandran--White \cite{CFRW}.
 \end{proof}
 \begin{lemm}
 \label{5-5}
 For $x^n =\sum _{r,s,i,l,k}\alpha _{r,s}^{n,i,l,k} \xi _{r,s}^{i,l,k}\in L$ with $\|x^n\|_2 =1$ and $\| [x^n, \tilde{\chi}_1]\|_2\to 0$ as $n\to \omega$, we have
 \[ \lim _{n \to \omega }\sum _{i,l,k,s}|\alpha _{0,s}^{n,i,l,k}|^2=0,\]
 \[ \lim _{n \to \omega }\sum _{i,l,k,r}|\alpha _{r,0}^{n,i,l,k}|^2=0.\]
 \end{lemm}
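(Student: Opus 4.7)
The plan is to combine the approximate recurrence from Lemma~\ref{5-4} with the explicit structure of elements of $L$ given by Lemma~\ref{16}(1), and then to argue that these two constraints are only compatible in the $\omega$-limit when the boundary coefficients vanish. Write $B_r^n := \sum_{i,l,k}|\alpha_{r,0}^{n,i,l,k}|^2$ for the ``boundary mass'' at level $r$, so that the goal is to show $\sum_{r\geq 0} B_r^n \to 0$, together with the analogous statement for $\alpha_{0,s}$.

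The first ingredient is Lemma~\ref{5-4}, which yields
\[ \alpha_{r,0}^{n,i,l,k} \;\approx\; \tfrac{1}{\sqrt{3}}\bigl(\alpha_{r+1,0}^{n,i,l,k-1} + \alpha_{r+1,0}^{n,i,l,k+1}\bigr) \]
in the $\ell^2$-norm over $(i,l,k,r)$ as $n \to \omega$, together with its analogue for the other boundary.  The second ingredient is the hypothesis $x^n \in L$: by Lemma~\ref{16}(1), for $|k|\geq 1$ the coefficient $\alpha_{r,s}^{n,i,l,k}$ is determined by the coefficients at $|k|\in\{0,1\}$ through the explicit expansion formula, and by Lemma~\ref{16}(2) we have the geometric decay $\sum_{|k|\geq k_0}|\alpha_{r,s}^{n,i,l,k}|^2 \leq C\cdot 2^{-k_0}$.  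The plan is to substitute the $L$-expansion formula into the approximate recurrence, obtaining a self-contained relation involving only the base coefficients $\alpha_{r,s}^{n,i,l,0}$ and $\alpha_{r,s}^{n,i,l,\pm 1}$, in which the boundary values at $s=0$ are driven by interior values at $s\geq 1$.  Iterating this relation, together with the $\ell^2$-bound $\|x^n\|_2 = 1$ and the $k$-decay, then forces $\sum_r B_r^n \to 0$ as $n \to \omega$; the second equality of the lemma is obtained symmetrically using the second recurrence in Lemma~\ref{5-4}.

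The main obstacle is the competition between two scales: the approximate recurrence spreads mass across adjacent $k$-indices at each iteration, whereas the $L$-structure concentrates mass near $k=0$.  A direct iteration of the recurrence $N$ times naively gives an operator factor $(2/\sqrt{3})^N$, which is useless by itself; the argument must exploit the phases $d^{l\,\mathrm{sgn}(k)\,j}$ appearing in the $L$-expansion formula to realise sufficient cancellation, so that the effective contraction rate of one iteration is strictly less than $1$ when restricted to coefficients supported in a thin shell of $|k|$.  Additional care is required at $|k|=1$, where the $L$-expansion formula degenerates to an identity and one must rely on the approximate recurrence alone, and at $(r,s)=(0,0)$, where the $r$-boundary and $s$-boundary arguments meet and the two recurrences of Lemma~\ref{5-4} must be used in tandem.
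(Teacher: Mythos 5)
Your proposal correctly identifies two of the three ingredients the paper uses (the approximate recurrence of Lemma \ref{5-4} and the $L$-structure from Lemma \ref{16}), but it has two concrete gaps. First, you never control the tail of the sum over $r$. Showing that each fixed boundary coefficient tends to zero does not yield $\lim_n\sum_{r,i,l,k}|\alpha_{r,0}^{n,i,l,k}|^2=0$ without a bound on the tail $\sum_{r\geq R}$ that is uniform in $n$; the paper obtains this from Lemmas \ref{5-2} and \ref{5-3}, which give $\bigl(\sum_{r\geq s,i,l,k}|\alpha_{r,s}^{n,i,l,k}-\alpha_{r+s,0}^{n,i,l,k}|^2\bigr)^{1/2}<\epsilon$, and then averages over $s=1,\dots,s_0$: if the boundary mass beyond level $2s_0$ did not become small, the interior mass at $(r,s)$ would have to be comparable for $s_0$ distinct values of $s$, exceeding the total norm; this produces the bound $C_0^2/s_0+O(\epsilon)$. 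That averaging step is entirely absent from your plan and cannot be recovered from the recurrence of Lemma \ref{5-4} alone, which never leaves the boundary $s=0$ (contrary to your remark that the boundary values are ``driven by interior values at $s\geq 1$'').

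Second, and more seriously, the step you flag as the main obstacle is exactly the heart of the proof, and the mechanism you propose for it would not work. Phase cancellation from the factors $d^{l\,\mathrm{sgn}(k)j}$ cannot give a contraction uniformly in $l$: for those $l$ with $d^{l}$ close to $1$ all these phases are close to $1$ and there is no cancellation at all. What the paper actually does is combine the $L$-relations of Lemma \ref{16}(1) at adjacent indices (the differences $\mathrm{(A)}_{r-|k|-1,|k|}-\mathrm{(A)}_{r-|k|,|k|-1}$ in the text) with inequalities (*)--(****) and Lemma \ref{5-4} to isolate the two-term relation $\|\sqrt{3}\,\alpha_{r-1,0}^{n,i,l,0}-\tfrac{1}{\sqrt{3}}(d^{l}+d^{-l})\alpha_{r+1,0}^{n,i,l,0}\|_2=O(\delta)$; since $|d^{l}+d^{-l}|=2|\cos(2\pi l\theta)|\leq 2<3$ uniformly in $l$, a nonvanishing boundary coefficient of size $c$ forces $\|\alpha_{r+2t-1,0}^{n,i,l,0}\|_2\geq (4/3)^{t}c$, contradicting $\|x^n\|_2\leq 1$. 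This is a blow-up-by-contradiction argument resting on the elementary bound $|{\cos}|\leq 1$, not on oscillation of the phases, and your plan gives no route to it. Until you exhibit an explicit relation whose expansion (or contraction) factor is bounded away from $1$ uniformly in $l$ and $k$, the proposal is a restatement of the difficulty rather than a proof.
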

 \begin{proof}
By the middle $\leq $ the right of Lemma \ref{5-2} and Lemma \ref{5-4}, for any $\epsilon >0$ and $s\in \mathbf{N}$, there exists a natural number $N_s$ such that
\[ \Biggl( \sum _{r\geq s, i,l,k } |\alpha _{r,s}^{n,i,l,k}-\alpha _{r+s,0}^{n,i,l,k}|^2 \Biggr) ^{1/2}<\epsilon \tag{*} \]
for any $n \geq N_s$.
Similarly, by Lemmas \ref{5-3} and \ref{5-4}, for any $\epsilon >0$ and $r\in \mathbf{N}$, there exists a natural number $N_r$ such that
\[ \Biggl( \sum _{s\geq r, i,l,k} |\alpha _{r,s}^{n,i,l,k}-\alpha _{0,s+r}^{n,i,l,k}|^2 \Biggr) ^{1/2}<\epsilon \tag{**} \]
for any $n \geq N_r$.
Fix a natural number $s_0$.
By picking up terms over $r\geq 2s_0-s$ of the first inequality for $s=1, \cdots s_0$, we have
\[ \Biggl( \sum _{r\geq 2s_0-s, \ i,l,k} |\alpha _{r,s}^{n,i,l,k}-\alpha _{r+s,0}^{n,i,l,k}|^2\Biggr) ^{1/2}<\epsilon \]
for $n\geq N_1, \cdots , N_{s_0}$.
By the triangle inequality, we have
\[ \left| \Biggl( \sum _{r\geq 2s_0-s, \ i,l,k}|\alpha _{r,s}^{n,i,l,k}|^2\Biggr) ^{1/2}-\Biggl( \sum _{r\geq 2s_0, i,l,k}|\alpha _{r,0}^{n,i,l,k}|^2\Biggr) ^{1/2}\right| <\epsilon .\]
Here, we re-enumerated the index of the second term of the left hand side of the above inequality.
Hence we have
\[ \Biggl( \sum _{r\geq 2s_0, \ i,l,k}|\alpha _{r,0}^{n,i,l,k}|^2 \Biggr) ^{1/2} < \Biggl( \sum _{r\geq 2s_0-s, \ i,l,k}|\alpha _{r,s}^{n,i,l,k}|^2\Biggr) ^{1/2}+\epsilon .\]
Taking the square of the above inequality, we have
\begin{align*}
 \sum _{r\geq 2s_0, \ i,l,k} |\alpha _{r,0}^{n,i,l,k}|^2 &<\sum _{r\geq 2s_0-s, \ i,l,k}|\alpha _{r,s}^{n,i,l,k}|^2+2\Biggl( \sum _{r\geq 2s_0-s, \ i,l,k}|\alpha _{r,s}^{n,i,l,k}|^2\Biggr) ^{1/2}\epsilon +\epsilon ^2 \\
                                                                              &\leq \sum _{r\geq 2s_0-s, \ i,l,k}|\alpha _{r,s}^{n,i,l,k}|^2+ 2C_0 \epsilon +\epsilon ^2.
 \end{align*}
 Taking the average of the above inequality over $s=1, \cdots ,s_0$, we have
 \begin{align*}
 \sum _{r\geq 2s_0, \ i,l,k} |\alpha _{r,0}^{n,i,l,k}|^2 &< \frac{1}{s_0}\Biggl( \sum _{r\geq 2s_0-s, \ s=1, \cdots , s_0, \ i,l,k}|\alpha _{r,s}^{n,i,l,k}|^2\Biggr) +2C_0\epsilon +\epsilon ^2 \\
                                                                             &<\frac{1}{s_0}C_0^2+2C_0\epsilon +\epsilon ^2.
 \end{align*}
 Hence we have 
 \[ \lim _{n \to \omega } \Biggl( \sum _{r\geq 2s_0, \ i,l,k}|\alpha _{r,0}^{n,i,l,k}|^2\Biggr) ^{1/2}\leq \frac{C_0}{\sqrt{s_0}}.\]
 Similarly, we have
 \[ \lim _{n \to \omega } \Biggl( \sum _{s\geq 2r_0, \ i,l,k}|\alpha _{0,s}^{n,i,l,k}|^2\Biggr) ^{1/2}\leq \frac{C_0}{\sqrt{s_0}}.\]
 On the other hand, by Lemma \ref{16} (2), for any $k_0$, we have
 \[ \lim _{n \to \omega} \Biggl( \sum _{r<2s_0, \ |k|\geq k_0, i,l}|\alpha _{r,0}^{n,i,l,k}|^2\Biggr) ^{1/2}<\frac{C}{2^{k_0}}.\]
 Next, by looking at the partial sum over $r=s(\geq 1)$ of  inequality (*), for $n\geq N_r$, we have
 \[ \Biggl( \sum _{i,l,k} |\alpha _{r,r}^{n,i,l,k}-\alpha _{2r,0}^{n,i,l,k}|^2\Biggr) ^{1/2}<\epsilon .\]
 By looking at the partial sum over $r=s$ of inequality (**), there exists a natural number $M_r$ such that for any $n \geq M_r$, we have
 \[ \Biggl( \sum _{i,l,k} |\alpha _{r,r}^{n,i,l,k}-\alpha _{0,2r}^{n,i,l,k}|^2\Biggr) ^{1/2}<\epsilon . \]
 Hence for $n \geq N_r, M_r$, we have
 \[ \Biggl( \sum _{i,l,k} |\alpha _{0,2r}^{n,i,l,k}-\alpha _{2r,0}^{n,i,l,k}|^2\Biggr) ^{1/2}<2\epsilon  \tag{***}.\]
 By using this and Lemma \ref{5-4}, and by taking $\epsilon $ smaller,  we have
 \[ \Biggl( \sum _{i,l,k} |\alpha _{0,2r-1}^{n,i,l,k}-\alpha _{2r-1,0}^{n,i,l,k}|^2\Biggr) ^{1/2}<2\epsilon \tag{****}.\]
Suppose that there exited $r\geq 2$ with $\lim _{n \to \omega } \sum _{i,l}| \alpha _{r,0}^{n,i,l,0}|^2=c>0$.
Take a small positive number $\delta >0$, which depends on $c$ and is determined later.
Since $x^n \in L$, by Lemma \ref{16} (1),  we have
\[ \alpha _{r,s}^{n,i,l,k}=\frac{1}{3^{|k|/2}}(\sqrt{3}\sum _{j=0}^{|k|-1}d^{l\mathrm{sgn}(k)j}\alpha ^{n,i,l,\mathrm{sgn}(k)}_{r+j,s+|k|-j-1}-\sum _{j=1}^{|k|-1}d^{l\mathrm{sgn}(k)j}\alpha ^{n,i,l,0}_{r+j,s+|k|-j})\]
for any $i,l,k,r,s$.
On the other hand, by Lemma \ref{5-4}, we have
 \[ \sum _{r\geq 0, \ i,l,k}|\alpha _{r,0}^{n,i,l,k}-\frac{1}{\sqrt{3}}(\alpha _{r+1,0}^{n,i,l,k-1}+\alpha _{r+1,0}^{n,i,l,k+1})|^2<\delta \]
for any sufficiently large $n$.
By inequalities (*), (**), (***) and (****), for any fixed $(r,s)\in \mathbf{N}^2$, we have
\[ (\sum _{i,l,k}|\alpha ^{n,i,l,k}_{r,s}-\alpha ^{n,i,l,k}_{r+s,0}|^2)^{1/2}<\delta \]
for any sufficiently large $n$.
From now, for any fixed $(r,k)$, we regard $\{ \alpha ^{n,i,l,k}_{r,0}\}_{i,l}$ as a vector of $\ell ^2 (\{ 1,2\}\times \mathbf{Z})$.
Then we have
\[ \| \alpha ^{n,i,l,k}_{r,0}-\frac{1}{3^{|k|/2}}(\sqrt{3}\sum _{j=0}^{|k|-1}d^{l\mathrm{sgn}(k)j}\alpha _{r+|k|-1,0}^{n,i,l,\mathrm{sgn}(k)}-\sum _{j=1}^{|k|-1} d^{l\mathrm{sgn}(k)j} \alpha _{r+|k|,0}^{n,i,l,0})\| _2< \frac{|k|}{3^{\frac{|k|-1}{2}}}\cdot 2\delta ,\]
\[ \| \alpha ^{n,i,l,k+\mathrm{sgn}(k)}_{r+1,0}-\frac{1}{3^{\frac{|k|+1}{2}}}(\sqrt{3}\sum _{j=0}^{|k|} d^{l\mathrm{sgn}(k)j}\alpha ^{n,i,l,\mathrm{sgn}(k)}_{r+|k|+1,0}-\sum _{j=1}^{|k|} d^{l\mathrm{sgn}(k)j}\alpha ^{n,i,l,0}_{r+|k|+2,0})\| _2 <\frac{|k|+1}{3^{\frac{|k|}{2}}}  \cdot 2\delta ,\] 
\[ \| \alpha ^{n,i,l,k-\mathrm{sgn}(k)}_{r+1,0}-\frac{1}{3^{\frac{|k|-1}{2}}}(\sqrt{3}\sum _{j=0}^{|k|-2} d^{l\mathrm{sgn}(k)j}\alpha ^{n,i,l,\mathrm{sgn}(k)}_{r+|k|-1,0}-\sum _{j=1}^{|k|-2}d^{l\mathrm{sgn}(k)j}\alpha ^{n,i,l,0}_{r+|k|,0})\| _2 <\frac{|k|-1}{3^{\frac{|k|+1}{2}}}\cdot 2\delta .\]
Thus we have
\begin{align*}
\delta &> \| \alpha ^{n,i,l,k}_{r,0}-\frac{1}{\sqrt{3}}(\alpha ^{n,i,l,k+\mathrm{sgn}(k)}_{r+1,0}+\alpha ^{n,i,l,k-\mathrm{k}}_{r+1,0}) \| _2 \\
         &>-\frac{|k|+1}{3^{\frac{|k|-1}{2}}} (1+\frac{2}{\sqrt{3}})\cdot 2\delta +\frac{1}{3^{\frac{|k|}{2}}}\| (\sqrt{3}\sum _{j=0}^{|k|-1}d^{l\mathrm{sgn}(k)j}\alpha _{r+|k|-1,0}^{n,i,l,\mathrm{sgn}(k)}-\sum _{j=1}^{|k|-1}d^{l\mathrm{sgn}(k)j}\alpha _{r+|k|,0}^{n,i,l,0})   \\
         &-\frac{1}{\sqrt{3}^2} (\sqrt{3}\sum _{j=0}^{|k |} d^{l\mathrm{sgn}(k)j}\alpha ^{n,i,l,\mathrm{sgn}(k)}_{r+|k|+1,0}-\sum _{j=1}^{|k|} d^{l\mathrm{sgn}(k)j}\alpha ^{n,i,l,0}_{r+|k|+2,0}) \\
         &- \frac{\sqrt{3}}{\sqrt{3}}(\sqrt{3}\sum _{j=0}^{|k|-2} d^{l\mathrm{sgn}(k)j}\alpha ^{n,i,l,\mathrm{sgn}(k)}_{r+|k|-1,0}-\sum _{j=1}^{|k|-2}d^{l\mathrm{sgn}(k)j}\alpha ^{n,i,l,0}_{r+|k|,0}) \| _2\\ 
         &>\frac{1}{3^{\frac{|k|}{2}}}\| (\sqrt{3}\alpha _{r+|k|-1,0}^{n,i,l,\mathrm{sgn}(k)}-\alpha _{r+|k|,0}^{n,i,l,0}) \\
         &-\frac{1}{3}(\sqrt{3}\sum _{j=-|k|+1}^1 d^{l\mathrm{sgn}(k)j}\alpha ^{n,i,l,\mathrm{sgn}(k)}_{r+|k|+1,0} -\sum _{j=-|k|+2}^1 d^{l\mathrm{sgn}(k)j}\alpha ^{n,i,l,0}_{r+|k|+2,0}) \| _2 -6\frac{|k|+1}{3^{\frac{|k|-1}{2}}}\delta \\
          &> \mathrm{(A)}_{r,k}-999\delta 
\end{align*}
for any sufficiently large $n$ (How large we should take $n$ depends on $r$, $k$ and $\delta$).
Thus we have $\mathrm{(A)}_{r,k}<1000\delta $.
Now, we have $r+|k|=(r+1)-(|k|-1)$.
Hence for a fixed $r\geq 3$, we have
\begin{align*}
\| \frac{1}{3}(\sqrt{3}\alpha ^{n,i,l,\mathrm{sgn}(k)}_{r,0}-d^{l\mathrm{sgn}(k)}\alpha ^{n,i,l,0}_{r+1,0}) \| _2 &\leq \| \mathrm{(A)}_{r-|k|-1,|k|}-\mathrm{(A)}_{r-|k|,|k|-1}\| _2 \\
                                                                                                                                                                 &< 2000\delta .
\end{align*}
Thus we have
\[ \| \sqrt{3}\alpha ^{n,i,l,\mathrm{sgn}(k)}_{r,0}-d^{l\mathrm{sgn}(k)}\alpha _{r+1,0}^{n,i,l,0}\| _2 \leq 10000\delta .\]
Hence we have
\[ \| \alpha _{r,0}^{n,i,l,1}+\alpha _{r,0}^{n,i,l,-1}-\frac{1}{\sqrt{3}}(d^l+d^{-l})\alpha _{r+1,0}^{n,i,l,0}\| <19990\delta .\]
Thus we have 
\[ \| \sqrt{3}\alpha _{r-1,0}^{n,i,l,0}-\frac{1}{\sqrt{3}}(d^{l}+d^{-l})\alpha _{r+1,0}^{n,i,l,0}\| <20000\delta \]
for any sufficiently large $n$ (depending on $r$ and $\delta $).
Hence if we take $\delta $ so large that it satisfies $\delta <c/100000 $, we have
\[ \| \alpha _{r+1}^{n,i,l,0}\| _2 \geq \frac{4}{3}c \]
if we take a large $n$.
Take a large $T\in \mathbf{N}$ so large that it satisfies $T>1/c$.
Then,  by induction, for any $T\geq t>0$, we have
\[  \| \alpha _{r+2t-1, 0}^{n,i,l,0}\| _2 \geq \frac{4^t}{3^t}c \]
for any large $n$ (depending on $r$, $T$ and $\delta$), which would contradict the fact that $ \sum _{i,l,k,r,s}| \alpha ^{n,i,l,k}_{r,s}|^2 \leq 1$.
Hence we have $ \| \alpha ^{n,i,l,0}_{r,0}\|_2 \to 0$ for any $r\geq 2$.
By using other inequalities, it is possible to show that $\| \alpha ^{n,i,l,k}_{r,0} \| _2 \to 0$ for any $r,k$.

Hence we have
\begin{align*}
\ & \lim _{n \to \omega} \sum _{i,l,k}|\alpha _{r,0}^{n,i,l,k}|^2 \\
   &=\lim _{n \to \omega } \Biggl( \sum _{r\geq 2s_0, \ i,l,k}+\sum _{r<2s_0, \ |k|\geq k_0}+\sum _{r<2s_0, \ |k|<k_0}\Biggr) |\alpha _{r,0}^{n,i,l,k}|^2 \\
   &\leq \frac{C_0^2}{s_0}+\frac{C}{2^{k_0}}+0.
\end{align*}
For any $\epsilon >0$, we choose $s_0$ so large that we have $C_0^2/s_0<\epsilon /2$ and then we choose $k_0$ so huge that we have $C/2^{k_0}<\epsilon /2$.
Then we have 
\[ \sum _{r\geq 0, \ i,l,k}|\alpha _{0,r}^{n,i,l,k}|^2<\epsilon .\]
Thus we have $\lim _{n \to \omega } \sum _{r\geq 0, \ i,l,k}|\alpha _{0,r}^{n,i,l,k}|^2=0$.
By the same argument as above, we have
\[ \lim _{n \to \omega} \sum _{i,l,k}|\alpha _{0,s}^{n,i,l,k}|^2=0.\]
Thus we are done.
\end{proof}
\section{Counting the number of words which contribute to the inner product}
In this section, we show that for any vectors $\eta _1, \eta _2\in \mathrm{span}\{ \xi _{r,s}^{i,l,k}, (\xi _m)_{r,s}\mid r\geq M \ \mathrm{or} \ s\geq M\}$, any vectors $a,b \in M\ominus A$, the inner product $|\tau (a^*\eta _1^*b\eta _2)|$ is small if $M$ is large enough (Lemmas \ref{6-1} and \ref{6-2}).
This section corresponds to Section 5 and Lemma 6.1 of Cameron--Fang--Ravichandran--White \cite{CFRW}.
\begin{lemm}
\label{6-2}
Let $g, h$ be elements of $M \ominus A$ satisfying the following conditions.

\bigskip

\textup{(1)} The vector $g$ is of the form $u_{i_1}^{k_1}w_{i_1}\cdots u_{i_n}^{k_n}w_{i_n}$ for some $n\geq 1$, $i_1\not =\cdots \not =i_n$, $k_1, \cdots , k_n\in \mathbf{Z}$, where for any $s=1, \cdots , n$, the operator $u_{i_{s}}^{k_{s}}w_{i_{s}}$ satisfies either \textup{(a)} $k_{s}\not =0$, $w_{i_{s}}=1$ or \textup{(b)} $w_{i_{s}}$ is a normalizing unitary of $\{ u_{i_{s}}\}''$ which is orthogonal to $\{ u_{i_{s}}\}''$.

\textup{(2)} The vector $h$ is of the form $u_{i'_1}^{k'_1}x_{i'_1}\cdots u_{i'_{n'}}^{k'_{n}}x_{i'_{n'}}$ for some $n'\geq 1$, $i'_1\not =\cdots \not =i'_{n'}$, $k'_1, \cdots , k'_n\in \mathbf{Z}$, where for any $t=1, \cdots , n'$, the operator $u_{i'_{t}}^{k'_{t}}x_{i'_{t}}$ satisfies either \textup{(c)} $k'_{t}\not =0$, $x_{i'_{t}}=1$ or \textup{(d)} $x_{i'_{t}}$ is a normalizing unitary of $\{ u_{i'_{t}}\}''$ which is orthogonal to $\{ u_{i'_{t}}\}''$.

\textup{(3)} At least one of the operators $u_{i_s}^{k_s}w_{i_s}$ \textup{ (}$s=1, \cdots , n$\textup{)}, $u_{i'_{t}}^{k'_{t}}x_{i'_{t}}$\textup{ (}$t=1, \cdots , n'$\textup{)} satisfies the above condition \textup{(b)} or \textup {(d)}.

\bigskip

Then there exists a positive constant $C>0$, which depends neither on $w_{i_s}$'s nor $x_{i'_t}$'s, such that for any $M >4k:=4(|k_1|+|k_2|+\cdots +|k_n|+|k'_1|+\cdots +|k'_{n'}|)$, any vectors $\eta _1, \eta _2 $ of  the space
\begin{align*}
\ &\mathrm{span}\{ (\xi _m)_{2M+r, 2M+s}\mid  m\geq 0 , \ r,s\geq 0\} \\
   &\vee \Biggl( \mathrm{span}\{ \xi ^{i,l,k}_{2M+r, 2M+s}\mid i=1,2, \ l\in \mathbf{Z}, \ k\in \mathbf{Z} , r,s\geq 0\} \cap L\Biggr) ,
\end{align*}
 we have 
\[ | \langle \eta _1 g, h \eta _2 \rangle | \leq C\frac{M^4}{3^{M/2}}\| \eta _1 \| _2 \| \eta _2 \| _2.\]
\end{lemm}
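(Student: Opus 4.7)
I would reduce the estimate to a combinatorial count over completely reduced words, exploiting that both $(\xi_m)_{2M+r,2M+s}$ and $\xi^{i,l,k}_{2M+r,2M+s}$ are supported on words of the schematic form
\[
(\chi_1\text{-chain of length } 2M+r)\cdot(\text{core})\cdot(\chi_1\text{-chain of length } 2M+s),
\]
with the core depending only on $m$ or $(i,l,k)$ and of length bounded independently of $M$. First, I would expand $\eta_1$ and $\eta_2$ in the Riesz basis of Corollary~\ref{cor-3}; by the uniform Riesz-basis bounds in Lemma~\ref{3-11}(5) and Cauchy--Schwarz, it is enough to bound the matrix coefficients $|\langle \xi\, g, h\, \xi'\rangle|$ for basis vectors $\xi,\xi'$ and to sum them against $\|\eta_1\|_2\|\eta_2\|_2$.

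Second, I would analyse $\xi\, g$ and $h\, \xi'$ as linear combinations of completely reduced words. The key input from hypothesis~(3) is that at least one syllable of $g$ or of $h$ is a normalizing unitary $w_{i_s}$ (or $x_{i'_t}$) orthogonal to $\{u_{i_s}\}''$; in the irrational rotation realisation such an element lies in $\overline{\mathrm{span}}\{u_{i_s}^a v_{i_s}^b : b\neq 0\}$, so every term in its expansion carries a non-zero $v$-power. Since the flanking $\chi_1$-chains of $\xi$ and $\xi'$ consist only of powers of $u_1,u_2$, this $v$-power cannot be absorbed by reduction with the $\chi_1$-chains; it must appear intact in any completely reduced word occurring in both $\xi\, g$ and $h\, \xi'$, forcing a rigid alignment between the cores of $\xi$ and $\xi'$ at the position of the non-$u$ letter.

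Third, I would count the surviving matchings and extract the decay. Word-length matching together with the alignment constraint restricts the admissible index tuples $(r,s,r',s')$ to an $O(M^2)$-sized set, and further restricts the reduced words that can arise in a common reduction to roughly $3^{3M}$ out of the $\sim 3^{4M}$ words in the generic expansion, a ratio of $3^{-M}$. After normalising by $\|(\xi_m)_{r,s}\|_2\sim 2\cdot 3^{(r+s)/2}$ on both sides, this ratio produces the claimed $3^{-M/2}$. The polynomial factor $M^4$ absorbs the summation over admissible index quadruples together with the $(i,l,k)$ summation in the $L$-component, where Lemma~\ref{16}(2) controls the tail in $|k|$. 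The unimodular phases generated by $u_iv_i=dv_iu_i$ as non-$u$ syllables pass through the $\chi_1$-chains are harmless for absolute-value estimates. The arbitrary unitaries $w_{i_s}, x_{i'_t}$ are handled by expanding them in the orthonormal family $\{u_{i_s}^a v_{i_s}^b\}_{b\neq 0}$ and applying Cauchy--Schwarz on the coefficients; since they are unitary, the $\ell^2$-norm of their expansion is $1$, and the constant $C$ is therefore independent of the choice of normalizers.

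\textbf{Main obstacle.} The hardest step is the precise bookkeeping in the third paragraph: identifying exactly which completely reduced words can appear simultaneously in $\xi\, g$ and $h\, \xi'$, and extracting a clean $3^{-M/2}$ factor rather than $3^{-cM}$ for some unspecified $c$. The commutation $u_iv_i=dv_iu_i$ couples distinct cancellation depths through unimodular scalars, so phases must be tracked carefully, and the polynomial factor $M^4$ must emerge from nested sums over $r,s,i,l,k$ without overcounting. This is the $\mathbf{Z}^2\ast\mathbf{Z}^2$ analogue of the combinatorial analysis for the radial generator of $\mathbf{F}_2$ carried out in Section~5 of \cite{CFRW}, and the lesser freeness of $\mathbf{Z}^2\ast\mathbf{Z}^2$ demands extra care.
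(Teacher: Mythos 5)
Your plan follows essentially the same route as the paper: expand $\eta_1,\eta_2$ in the Riesz basis of Corollary~\ref{cor-3}, pass to completely reduced words, and use hypothesis (3) --- a normalizing unitary orthogonal to $\{u_{i_s}\}''$ expands entirely in $\{u_{i_s}^a v_{i_s}^b : b\neq 0\}$, so it plants a $v$-letter that the pure-$u$ connecting chains cannot absorb --- to annihilate the generic terms, with the $3^{-M/2}$ coming from the small $L^2$-weight of the surviving degenerate configurations. That is exactly the paper's mechanism, and your $\ell^2$-expansion of the normalizers plus Cauchy--Schwarz is a legitimate variant of the paper's direct use of the normalizing property. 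Two points of bookkeeping that you flag as the obstacle are resolved more sharply in the paper than in your sketch. First, the ``rigid alignment'' has a precise form (the paper's Claim 3): after writing each of $(\xi_m)_{2M+r,2M+s}$ and $(\xi_{m'})_{2M+r',2M+s'}$ as a length-$M$ block, a middle part, and another length-$M$ block, and performing the partial cancellations with $g$ and $h$, a term can have nonzero trace only if at least one of the four length-$M$ connecting blocks is the \emph{single} word $u_i^{\pm M}$ --- only a pure power of one $u_i$ can be conjugated through the blocking normalizer back into $\{u_i\}''$; anything else leaves a nontrivially reduced word, traceless by freeness. Pinning one block to one word out of the $4\cdot 3^{M-1}$ words of $\tilde{W}_M^0$ costs exactly a factor $\sim 3^{-M/2}$ in $\|\cdot\|_2$ after normalization; this replaces, and is cleaner than, your heuristic ``$3^{3M}$ out of $3^{4M}$'' count. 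Second, the factor $M^4$ does not come from restricting $(r,s,r',s')$: those sums are absorbed wholesale into $\|\eta_1\|_2\|\eta_2\|_2$ by the Riesz bound of Lemma~\ref{3-11}(5) and need no counting (length matching alone does not cut them down to an $O(M^2)$ set). It comes from summing over the four partial-cancellation depths between $g$, $h$ and the adjacent chains, each of which takes at most $k<M/4$ values by the hypothesis $M>4k$. With these two corrections your outline coincides with the paper's proof.
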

Before proving the lemma, we have to notice that the above $k_s$'s,  $k'_t$'s, $w_{i_s}$'s and $x_{i'_t}$'s are not completely determined by $h$ and $g$.
This lemma means that the above equation holds for any vectors $h$ and $g$ which admit the above presentation.
\begin{proof}
Roughly speaking, the strategy is the following.
Decompose the vectors $\eta _1^*$ and $\eta _2 $ into linear combinations of $wyw'$, $w''zw'''$, where $w,w',w'',w''', y,z $ are words with $|w|, |w'|,|w''|,|w'''|\gg |g|,|h|$.
Then we cancel each two neighboring words as possible.
The vital point is that at least one of $h^*$ and $g$ is not completely canceled by condition (3).
By using this fact, we show that for most $ (w,w',w'',w''')$, we have $\tau (h^*wyw'gw''zw''')=0$ (Claim 3).

Since the two spaces 
\[ \mathrm{span}\{ (\xi _m^i)_{2M+r, 2M+s}\mid  m\geq 0 , \ r,s\geq 0\}\]
and
\[ \mathrm{span}\{ \xi ^{i,l,k}_{2M+r, 2M+s}\mid i=1,2, \ l\in \mathbf{Z}, \ k\in \mathbf{Z} , r,s\geq 0\} \cap L\]
are mutually orthogonal, it is enough to show the following claim.

\bigskip

\textbf{Claim 1.}
Assume that vectors $\eta _1$ and $\eta _2$ belong to one of the above two spaces.
Then we have
\[ | \langle \eta _1 g, h \eta _2 \rangle | \leq C\frac{M^4}{3^{M/2}}\| \eta _1 \| _2 \| \eta _2 \| _2.\]

\bigskip

In the rest of this proof, we devote our attention to proving this claim.
For simplicity, we assume that both $\eta _1$ and $\eta _2$ belong to the former subspace (We can handle the other three cases in the same way).
Write $\eta _1$ and $\eta _2^*$ in the following way.
\[ \eta _1=\sum _{r,s\geq 0, \ m\in \mathbf{N}} 3^{-\frac{4M+r+s}{2}}\lambda _{m, 2M+r,2M+s} (\xi _m) _{2M+r,2M+s} ,\]
\[ \eta _2^*= \sum _{r',s'\geq 0, \ m' \in \mathbf{N}} 3^{-\frac{4M+r'+s'}{2}}\mu _{m', 2M+r',2M+s'}(\xi _{m'})_{2M+r',2M+s'} .\]
Then by Lemma \ref{3-11} (5), we have
\[ \| \eta _1\| _2 \geq C_0^{-1}\Biggl( \sum _{r,s,m}|\lambda _{m,2M+r,2M+s}|^2\Biggr) ^{1/2},\]
\[ \| \eta _2\| _2\geq C_0^{-1} \Biggl( \sum _{r',s',m'}|\mu _{m',2M+r',2M+s'}|^2\Biggr) ^{1/2}.\]

Set two vectors $h_{j''',j}$ and $g_{j',j''}$ in the following way.
When $w_{i_n}=1$, set
\[ h^*_{j''',j}:=u_{i_1}^{k_1-j'''}w_{i_1}u_{i_2}^{k_2}\cdots w_{i_{n-1}}u_{i_n}^{k_n-j}.\]
Here, $j'''$ and $j$ run over the following ranges.
When $n \geq 2$, $(j''',j)$ run over all $j'''=0, \cdots , k_1$, $j=0, \cdots ,k_n$.
When $n=1$, $k_1 \geq 0$, $(j''',j)$ run over all  $j'''=0, \cdots , k_1$, $j=0, \cdots , k_1$, $0\leq j+j'''\leq k_1$.
When $n=1$, $k_1<0$, $(j''',j)$ run over all $j'''=0, \cdots , k_1$, $j=0, \cdots , k_1$, $0\geq j+j'''\geq k_1$.

When $w_{i_n}\not =1$, set 
\[ h^*_{j''',j}:=u_{i_1}^{k_1-j'''}w_{i_1}\cdots u_{i_n}^{k_n}w_{i_n}\]
for $j'''=0, \cdots, k_1$, $j=0$.

When $x_{i'_{n'}}=1$, set
\[ g_{j',j''}:=u_{i'_1}^{k'_1-j'}x_{i'_1}u_{i'_2}^{k'_2}\cdots u_{i'_{n'}}^{k'_{n'}-j''}.\]
Here, $j'$ and $j''$ run over the following ranges.
When $n'\geq 2$, $(j',j'')$ run over all  $j'=0, \cdots , k'_1$, $j''=0, \cdots ,k'_n$.
When $n'=1$, $k'_1\geq 0$, $(j',j'')$ run over all $j'=0, \cdots , k'_1$, $j''=0, \cdots , k'_1$, $0\leq j'+j''\leq k'_1$.
When $n'=1$, $k'_1<0$, $(j',j'')$ run over all $j'=0, \cdots , k'_1$, $j''=0, \cdots , k'_1$, $0\geq j'+j''\geq k'_1$.

When $x_{i'_{n'}}\not =1$, set
\[ g_{j',j''}:=u_{i'_1}^{k'_1-j'}x_{i'_1}u_{i'_2}^{k'_2}\cdots u_{i'_{n'}}^{k'_{n'}}x_{i'_{n'}}\]
for $j'=0,\cdots , k'_1$, $j''=0$.
Here, there is an important notice: 

\bigskip

by condition (3), either $h_{j''',j}^*$ or $g_{j',j''}$ is not $1$.

\bigskip

Let
\[ 3^{-\frac{2M+r+s-j-j'}{2}}(\xi _m)_{M+r-j, M+s-j'}=\sum _{y}y,\]
\[ 3^{-\frac{2M+r+s-j''-j'''}{2}} (\xi _{m'})_{M+r'-j'',M+s-j'''}=\sum _{z}z\]
be decompositions, where $\{ y\}$, $\{ z\}$  are sets of mutually orthogonal non-zero scalar multiples of complete reduced words, respectively.

\bigskip

We also define a subset $V_1(m,r,s,j,y)$ of $\tilde{W}_M^0$ in the following way.

\textbf{Case 1.} When $w_{i_n}=1$, $V_1(m,r,s,j,y)$ is the set of all words $w$ satisfying the following conditions.

(1) The first letter of $w$ is neither the $(j-1)$-st letter of $h^*$ from the right nor the inverse of the $j$-th letter of $h$ from the right.

(2) The last letter of $w$ does not cancel with the first letter of $y$.

\textbf{Case 2.} When $w_{i_n}\not =1$,  $V_1(m,r,s,j,y)$ is the set of all words whose last letters do not cancel with the first letter of $y$.

\bigskip

We define a subset $V_2(m,r,s,j',y)$ of $\tilde{W}_M^0$ in the following way.

The set $V_2(m,r,s,j',y)$ consists of all words $w'$ satisfying the following conditions.

(1) The first letter of $w'$ does not cancel with the last letter of $y$.

(2)  The last letter of $w'$ is neither the $(j'-1)$-st letter of $g$ from the left nor the inverse of the $j'$-th letter of $g$ from the left.

\bigskip

We define a subset $V_3(m',r',s',j'',z)$  of $\tilde{W}_M^0$ in the following way.

\textbf{Case 1.} When $x_{i'_{n'}}=1$,  $V_3(m',r',s',j'',z)$ is the set of all words $w''$ satisfying the following conditions.

(1) The first letter of $w''$ is neither the $(j''-1)$-st letter of $g$ from the right nor the inverse of $j''$-th letter of $g$ from the right.

(2)  The last letter of $w''$ does not cancel with the first letter of $z$.

\textbf{Case 2.} When $x_{i'_{n'}}\not =1$, the set $V_3(m',r',s',j'',z)$ consists of all words whose last letters do not cancel with the first letter of $z$.

\bigskip

We define a subset  $V_4(m',r',s', j''',z)$ of $\tilde{W}_M^0$ in the following way.

The set $V_4(m',r',s', j''',z)$ consists  of all words $w'''$ satisfying the following conditions.

(1) The first letter of $w'''$ does not cancel with $z$.

(2) The last letter of $w'''$ is neither the $(j'''-1)$-st letter of $h^*$ from the left nor the inverse of the $j'''$-th letter of $h$ from the left.

\bigskip

Hereafter, if there is no danger of confusion, we sometimes abbreviate $V_1(m,r,s,j,y)$, $V_2(m,r,s,j',y)$, $V_3(m',r',s',j'',z)$ and $V_4(m',r',s',j''',z)$ to $V_1$, $V_2$, $V_3$ and $V_4$, respectively.
In this setting, we have the following claim.

\bigskip

\textbf{Claim 2.} We have
\begin{align*}
h^*\eta _1g\eta _2^*&= 3^{-2M}\sum _{j,j',j'',j'''}\sum _{r,s,m,r',s',m'}3^{\frac{j+j'+j''+j'''}{2}}u_{i_1}^{j'''}h_{j''',j}\sum _{y,z}\biggl( \sum _{w\in V_1(m,r,s,j,y)}w \biggr) \lambda _{m, 2M+r,2M+s}y \\
                                    &\biggl( \sum _{w'\in V_2(m,r,s,j',y)}w'\biggr) g_{j',j''}\biggl( \sum _{w''\in V_3(m',r',s',j'',z)}w'' \biggr) \mu _{m',2M+r',2M+s'}z \\
                                    &\biggl( \sum _{w'''\in V_4(m',r',s',j''',z)}w'''\biggr) u_{i_1}^{-j'''},
\end{align*}
where $j,j',j'',j'''$ run over the following ranges.
When $w_{i_n}=1$ and $n\geq 2$, $j,j'''$ run over all $j=0, \cdots , k_n$, $j'''=0, \cdots , k_1$.
When $w_{i_n}=1 $, $n=1$ and $k_1\geq 0$, $j,j'''$ run over all $j=0, \cdots , k_1$, $j'''=0, \cdots , k_1$, $0\leq j+j'''\leq k_1$.
When $w_{i_n}=1$, $n=1$ and $k_1<0$, $j,j'''$ run over all $j=0, \cdots , k_1$, $j'''=0, \cdots , k_1$, $0\geq j+j'''\geq k_1$.
When $w_{i_n} \not =1$, $j,j'''$ run over $j=0$, $j'''=0, \cdots , k_1$.

When $w_{i'_{n'}}=1$ and $n' \geq 2$, $j',j''$ run over all $j'=0, \cdots ,k_1$, $j''=0, \cdots , k'_n$.
When $w_{i'_{n'}}=1$, $n'=1$ and $k_1\geq 0$, $j',j''$ run over all $j'=0, \cdots , k'_1$, $j''0, \cdots , k'_1$, $j'+j''\leq k_1$.
When $w_{i'_{n'}}=1$, $n'=1$ and $k'_1<0$, $j',j''$ run over all $j'=0, \cdots , k'_1$, $j''=0, \cdots , k'_1$, $j'+j''\geq k'_1$.
When $w_{i'_{n'}}\not =1$, $j',j''$ run over all $j''=0$, $j'=0, \cdots , k'_1$.
 
\bigskip

\textit{Proof of Claim 2.} 
Obviously, we have
\begin{align*}
h^*\eta _1g\eta ^*_2 =3^{-\frac{8M+r+s+r'+s'}{2}}h^* &\sum _{m,r,s}\lambda _{m,2M+r,2M+s}(\xi _m) _{2M+r,2M+s} \\
                                      &g\sum _{m',r',s'}\mu _{m',2M+r',2M+s'}(\xi _{m'}) _{2M+r',2M+s'}.
\end{align*}
We would like to look at words appearing in $h^*(\xi _m) _{2M+r, 2M+s}g(\xi _{m'})_{2M+r',2M+s'}$ and to reduce each neighboring two words as possible.
Note that in this reduction, we do temporary think that the letters $w_i$ and $x_i$ are free from $u_i$.
Consider the component $h^*y'gz'$, where $y'$ is a linear component of $(\xi _m) _{2M+r,2M+s}$, $z'$ is a linear component of $(\xi _{m'})_{2M+r',2M+s'}$.
When we reduce each two neighboring two blocks in this word as possible, then the word becomes a linear sum of words of the form
\[ u_{i_1}^{j'''}h_{j''',j}wyw'g_{j',j''}w''zw''',\]
where $y$ is a component of $(\xi _m)_{M+r-j,M+s-j'}$, $z$ is a component of $(\xi _{m'})_{M+r'-j'',M+s'-j'''}$ and $w, w', w'',w'''$ are words of $\tilde{W}_M^0$ satisfying the following conditions.

\bigskip

(1) The word $w$ does not cancel with $h_{j''',j}^*$ or $y$.

(2) The word $w'$ does not cancel with $y$ or $g_{j',j''}$.

(3) The word $w''$ does not cancel with $g_{j',j''}$ or $z$.

(4) The word $w'''$ does not cancel with $z$ or $h_{j''',j}$.

\bigskip

We have to show that the above conditions are satisfied if and only if $(w,w',w'',w''') \in V_1\times V_2\times V_3\times V_4$.
For simplicity, we consider when $w_{i_n}=1$, $x_{i'_{n'}}=1$.
Other cases are shown in the same way (and the argument is much easier).
We show that if $w\not \in V_1$, then the word $w$ does not satisfy condition (1).
Since a word $w$ does not cancel with $h_{j''', j}$, the first letter of $w$ cannot be the inverse of the last letter of $h_{j''',j}$.
In order to cancel the $(j-1)$-st letter of $h$ from the right, there should be the inverse of that letter ahead of $w$.
Hence the first letter of $w$ cannot be the $(j-1)$-st letter of $h$.
Of course, if the last letter of $w$ cancel with $y$ , then the word $w $ cannot satisfy condition (1).
Hence any word $w\not \in V_1$ cannot contribute to the second summation.
On the other hand, if $w\in V_1$, then the word $w$ satisfies condition (1).
Similar statements hold for $w',w''$ and $w'''$.
Thus  we get the desired expression.
\qed

\bigskip

\textbf{Claim 3.}
Let $y$ be a linear component of $(\xi _m) _{M+r-j,M+s-j'}$, $z$ be a linear component of $(\xi _{m'})_{M+r'-j',M+s'-j''}$.
Consider a word $h^*_{j''',j}yxw'g_{j',j''}w''zw'''$.
If it satisfies $\tau (h^*_{j''',j}wyw'g_{j',j''}w''zw''')\not =0$,  then at least one of the following statements holds.

\bigskip

(1) We have $w=u_{i}^{\pm M}$.

(2) We have $w'=u_{i}^{\pm M}$.

(3) We have $w''=u_{i}^{\pm M}$.

(4) We have $w'''=u_{i}^{\pm M}$.

\bigskip

\textit{Proof of Claim 3.}
Assume that none of statements (1)--(4) holds.
We would like to show that $\tau (h_{j''',j}^*wyw'g_{j',j''}w''zw''')=0$.
Let
\[ h^*_{j''',j}=u_{i_1}^{k_1}w_{i_1}\cdots u_{i_n}^{k_n}w_{i_n},\]
\[ wyw'=u_{j_1}^{p_1}v_{j_1}^{q_1}\cdots u_{j_m}^{p_m}v_{j_m}^{q_m},\]
\[ g_{j',j''}=u_{i'_1}^{k'_1}x_{i'_1}\cdots u_{i'_{n'}}^{k'_{n'}}x_{i'_{n'}},\]
\[ w''zw'''=u_{j'_1}^{p'_1}v_{j'_1}^{q'_1}\cdots u_{j'_{m'}}^{p'_{m'}}v_{j'_{m'}}^{q'_{m'}}\]
be the completely reduced word expressions.

\textbf{Case 1}: neither $h^*_{j''',j}$ nor $g_{j',j''}$ is $1$.
In order to get the conclusion, it is enough to show that  the following.

\bigskip

(i) The vector $h_{j''',j}^*wyw'$ is a linear sum of words of the form $u_{i''_1}^{k''_1}v_{i''_1}^{l''_1}\cdots u_{i''_{n''}}^{k''_{n''}}v_{i''_{n''}}^{l''_{n''}}$.

(ii) The vector $g_{j',j''}w''zw'''$ is a linear sum of words of the form  $u_{i'''_1}^{k'''_1}v_{i'''_1}^{l'''_1}\cdots u_{i'''_{n'''}}^{k'''_{n'''}}v_{i'''_{n'''}}^{l'''_{n'''}}$.

(iii) Any pair of linear components of $h_{j''',j}^*wyw'$ and $g_{j',j''}w''zw'''$ does not cancel at all.

\bigskip

\textbf{Case 1-1}: both $w_{i_n}$ and $x_{i'_{n'}}$ are $1$.
In this case, we have statements (i)--(iii) without any assumption.

\bigskip

\textbf{Case 1-2}: neither  $w_{i_n}$ nor $x_{i'_{n'}}$  is $1$.
In Claim 2, we temporary thought that the letters $w_i$ and $x_i$ were free from $u_i$.
However, in this claim, we do not.
This may cause $h_{j''',j}^*$ and $w $ to cancel.
Nonetheless, the assumption that $w\not =u_i^{\pm M}$ ensures that if we reduce $h_{j''',j}^*wyw'$ as possible, it is either of the form
\[ u_{i_1}^{k_1}w_{i_1}\cdots u_{i_n}^{k_n}(w_{i_n}u_{i_1}^{p_1}w_{i_n}^*)w_{i_n}u_{j_2}^{p_2}v_{j_2}^{q_2}\cdots u_{j_m}^{p_m}v_{j_m}^{q_m}\]
($i_n \not =j_2$) or
\[ u_{i_1}^{k_1}w_{i_1}\cdots u_{i_n}^{k_n}w_{i_n}u_{j_1}^{p_1}v_{j_1}^{q_1}\cdots u_{j_m}^{p_m}v_{j_m}^{q_m}\]
($i_n\not =j_1$). Since $w_{i_n}$ is orthogonal to the subalgebra $\{ u_{i_n}\}''$ and normalizes it, by freeness, the vector $h_{j''',j}^*wyw'$ satisfies condition (i).
Similarly, since $w''\not =u_i^{\pm M}$, the vector $g_{j',j''}w''zw'''$ satisfies condition (ii).
Since $w'\not =u_i^{\pm M}$, if $k'_1=0$, then condition (iii) is shown by the same argument. 
If $k'_1\not =0$, then $w'$ and $g_{j',j''}$ does not cancel at all.
Hence condition (iii) is trivial.

\bigskip

\textbf{Case 1-3}: exactly onr of $w_{i_n}$ and $x_{i'_{n'}}$ is $1$.
This case is treated by the combination of the arguments in Cases 1-1 and 1-2.

\bigskip 

\textbf{Case 2}: exactly one of $h_{j''',j}^*$ and $g_{j',j''}$ is $1$.
For simplicity, we assume $g_{j',j''}=1$ (the other case is handled in the same way).
We have 
\[ \tau (h_{j''',j}^*wyw'g_{j',j''}w''zw''')=\tau (w'\cdot (w''zw'''h_{j''',j}^*wy)).\]
Since any component of $h_{j''',j}^*$ contains $v_j^{\pm 1}$, for $\tau (w'\cdot (w''zw'''h_{j''',j}^*wy))$, in order to be non-zero, at least one of $z$ and $y$ contains $v_j^{\pm 1}$.
Since neither $w'''$ nor $w$ is $u_i^{\pm M}$, by the same argument as that of Case 1,  any component of $w''zw'''h_{j''',j}^*wy$ contains at least one $v_j^{\pm 1}$ if we reduce it as possible.
Thus we have  $\tau (w'\cdot (w''zw'''h_{j''',j}^*wy))=0$.
\qed

\bigskip

Set
\[ \eta _{1,j,j'}:=3^{-M+\frac{j+j'}{2}}\sum _{r,s,m}\sum _y \Biggl( \sum _{w\in V_1}w\Biggr) \lambda _{m,2M+r,2M+s}y\Biggl( \sum _{w'\in V_2}w'\Biggr) .\] 
\[ \eta _{1,j,j'}':=3^{-M+\frac{j+j'}{2}}\sum _{r,s,m}\sum _y\Biggl( \sum _{i=1,2, \ s\in \{ \pm 1\} , \ u_i^{sM} \in V_1}  u_i^{sM}\Biggr) \lambda _{m,2M+r,2M+s}y \Biggl( \sum _{w'\in V_2}w'\Biggr) ,\]
\[ \eta _{1,j,j'}'':=3^{-M+\frac{j+j'}{2}}\sum _{r,s,m} \sum _y\Biggl( \sum _{w\in V_1, \ w\not =u_i^{sM}}w\Biggr) \lambda _{m,2M+r,2M+s}y\Biggl(  \sum _{i=1,2, \ s\in \{ \pm 1\} , \ u_i^{sM} \in V_2}  u_i^{sM}\Biggr) .\] 
\[ \eta _{2,j',j''}^*:=3^{-M+\frac{j''+j'''}{2}}\sum _{r',s',m'}\sum _z\Biggl( \sum _{w''\in V_3}w'' \Biggr) \mu _{m',2M+r',2M+s'}z\Biggl( \sum _{w'''\in V_4} w'''\Biggr),\]
\[ \eta _{2,j',j''}'^*:=3^{-M+\frac{j''+j'''}{2}}\sum _z\Biggl( \sum _{i=1,2, \ s\in \{ \pm 1\} , u_i^{sM}\in V_3}  u_i^{sM}\Biggr)  \mu _{m',2M+r',2M+s'} z \Biggl( \sum _{w'''\in V_4}w'''\Biggr) ,\]
\[ \eta _{2,j',j''}''^*:=3^{-M+\frac{j''+j'''}{2}}\sum _z\Biggl( \sum _{w''\in V_3, \ w''\not =u_i^{sM}}w''\Biggr) \mu _{m',2M+r',2M+s'}z\Biggl( \sum _{i=1,2, \ s\in \{ \pm 1\}, \ u_i^{sM}\in V_4}  u_i^{sM}\Biggr) .\]
Then we have
\begin{align*}
 \| h_{j''',j}^*\eta _{1,j,j'}' \| _2 &=\| \eta _{1,j,j'}'\| _2 \\
                                                        &\leq 4\cdot 3^{-M+\frac{j+j'}{2}} \| \sum _{r,s,m}\sum _y\lambda _{m,2M+r,2M+s}y\Biggl( \sum _{w'\in V_2}w' \Biggr) \| _2 \\
                                                        &\leq 4\cdot 3^{-M+\frac{j+j'}{2}}\| \sum _{r,s,m}\sum _y \lambda _{m,2M+r,2M+s}y\Biggl( \sum _{w'\in W_M^0, \ |yw'|=|y|+M}w'\Biggr) \| _2 \\
                                                        &=4\cdot 3^{-M+\frac{j+j'}{2}}\| \sum _{r,s,m}\lambda _{m,2M+r,2M+s}(\xi _m)_{M+r-j,2M+s-j'}\| _2 \\
                                                        &\leq  4\frac{C_0}{3^{M/2}}\Biggl( \sum _{r,s,m}|\lambda _{m, 2M+r,2M+s}|^2\Biggr) ^{1/2} \leq 4\frac{C_0^2\cdot 3^{k}}{3^{M/2}}\| \eta _1\| _2.
 \end{align*}
Similar statements holds for other three vectors.
We also have
\begin{align*}
\ &\| h_{j''',j}^*(\eta _{1,j,j'}-(\eta _{1,j,j'}'+\eta _{1,j,j'}'')) \| _2 \\
&\leq \| h_{j''',j}^* \eta _{1,j,j'} \| _2 \\
&\leq 3^{-M} \| \sum _{r,s,m}\sum _y\lambda _{m,2M+r,2M+s}\Biggl( \sum _{w\in V_1} w\Biggr) y \Biggl( \sum _{w'\in V_2} w'\Biggr) \| _2.
\end{align*}
By the same argument as above, the above left hand side is not greater than
\[ 3^{-M}\cdot 3^{M}\cdot C_0\Biggl( \sum _{r,s,m} |\lambda _{m,2M+r,2M+s}|^2\Biggr) ^{1/2} \leq C_0 ^2\| \eta _1\| _2.\]

Hence we have
\begin{align*}
| \langle \eta _1g, h \eta _2\rangle |&= | \tau (h^*\eta _1 g\eta _2^*)  | \\
                                                      &\leq \sum _{j,j',j'',j'''} \Biggl(| \tau (h_{j''',j}^*(\eta _{1,j,j'} '+\eta _{1,j,j'}'') g_{j',j''}\eta _{2,j,j'}^*)  \\
                                                      &+ \tau (h_{j''',j}^*(\eta _{1,j,j'}-(\eta _{1,j,j'}'+\eta _{1,j,j'}''))g_{j',j''}(\eta _{2,j',j''}'^*+\eta _{2,j',j''}''^*)| \\
                                                      &+| \tau (h_{j''',j}^*(\eta _{1,j,j'}-(\eta _{1,j,j'}'+\eta _{1,j,j'}''))g_{j',j''}(\eta _{2,j',j''}^*-(\eta _{2,j',j''}'^*+\eta _{2,j',j''}''^*)) | \Biggr) .
\end{align*}
By Claim 3, the third term in the above sum is zero.
Hence the right hand side is not greater than
\[ M^4\Biggl( 4\cdot 4\cdot \frac{C_0^4}{3^{M/2}}\| \eta _1\| _2 \| \eta _2\| _2 +0\Biggr) \leq  16 \cdot C_0^4\cdot \frac{M^4}{3^{M/2}}\| \eta _1\| _2 \| \eta _2\| _2.\]
Thus we are done.
\end{proof}

We also have the following.
\begin{lemm}
\label{6-1}
Let $g_1,g_2$ be words of $W_l^0$ for some non-negative integer $l\geq 0$.
Let $h$ be a vector satisfying the following condition.

\bigskip

The vector $h$ is of the form $u_{i'_1}^{k'_1}x_{i'_1}\cdots u_{i'_{n'}}^{k'_{n}}x_{i'_{n'}}$ for some $n'\geq 1$, $i'_1\not =\cdots \not =i'_{n'}$, $k'_1, \cdots , k'_n\in \mathbf{Z}$, where for any $t=1, \cdots , n'$, the operator $u_{i'_{t}}^{k'_{t}}x_{i'_{t}}$ satisfies either \textup{(c)} $k'_{t}\not =0$, $x_{i'_{t}}=1$ or \textup{(d)} $x_{i'_{t}}$ is a normalizing unitary of $\{ u_{i'_{t}}\}''$ in $R_{i'_t}$ which is orthogonal to $\{ u_{i'_{t}}\}''$.

\bigskip

Set $k:=|k'_1|+\cdots |k'_n|$ \textup{(}Actually, this depends on the presentation of $h$. However, we fix the presentation or take the minimum\textup{)}.
Then there exists a constant $C>0$, which does not depend either  on none of $x_{i_t}$'s, such that for any $M >2\mathrm{max}\{ l,k\}$, any vectors $\eta _1$, $\eta _2$ of  
\[ \mathrm{span}\{ (\xi _m)_{M+r, M+s}\mid  m\geq 0 , r,s\geq 0\}\vee \Biggl( \mathrm{span}\{ \xi ^{i,l,k}_{M+r, M+s}\mid i=1,2, \ l\in \mathbf{Z}\setminus \{ 0\}, \ k\in \mathbf{Z} , r,s\geq 0\} \cap L\Biggr) ,\] we have
\[ \langle \eta _1 (g_1-g_2) , h \eta _2 \rangle \leq CM^43^{-M/2}\| \eta _1\| _2\| \eta _2\| _2.\]
\end{lemm}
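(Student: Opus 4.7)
My plan is to reduce the lemma to a hard sub-case by first disposing of the case where $h$ contains a normalizer. If at least one block $u_{i'_t}^{k'_t}x_{i'_t}$ of $h$ satisfies condition (d), then the pair $(g_i, h)$ fulfills every hypothesis of Lemma \ref{6-2}, since each letter $u_{j_s}^{\pm 1}$ of $g_i \in W_l^0$ is of type (a) in that lemma and condition (3) is supplied by the normalizer block in $h$. Lemma \ref{6-2} applied separately to each pair $(g_i, h)$ then gives $|\langle \eta_1 g_i, h\eta_2 \rangle| \leq CM^4 3^{-M/2}\|\eta_1\|_2\|\eta_2\|_2$ for $i=1,2$, and the triangle inequality finishes the proof in this case.

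Assume now that $h$ is itself a completely reduced pure $u$-word. I would repeat the proof of Lemma \ref{6-2} with $g = g_i$: the expansion of $\tau(h^*\eta_1 g_i\eta_2^*)$ as a sum over $(j,j',j'',j''')$, the splittings $\eta_{1,j,j'} = \eta'_{1,j,j'} + \eta''_{1,j,j'} + (\text{residual})$, and the norm bounds from Lemma \ref{3-11}(5) all go through unchanged. The long-buffer contribution, where at least one of $w,w',w'',w'''$ equals $u_i^{\pm M}$, inherits the $M^4 3^{-M/2}$ bound, and this survives term-by-term subtraction. What remains is to re-examine Claim 3 of Lemma \ref{6-2} for the generic-buffer contribution; its Case 2 used that $h^*_{j''',j}$ contains a letter $v_j^{\pm 1}$, which now fails.

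However, for $g_i \in W_l^0$ the truncation $g_{i,j',j''}$ is itself a pure $u$-word whose middle letters $u_{i'_2}^{\pm 1}\cdots u_{i'_{l-1}}^{\pm 1}$ survive any truncation, so it is non-trivial whenever $l\geq 3$; then Case 1 of Claim 3 covers everything by freeness of $R_1$ and $R_2$ in $R_1*R_2$. For $l\leq 2$ only finitely many pairs $(j',j'')$ can collapse some $g_{i,j',j''}$ to $1$, and the crucial observation is that when $g_{1,j',j''}=g_{2,j',j''}=1$ the corresponding trace $\tau(h^*_{j''',j}w y w' w'' z w''')$ is independent of the index $i$ and cancels identically in $g_1-g_2$; when exactly one of $g_{1,j',j''}, g_{2,j',j''}$ is trivial, the other is a non-trivial pure $u$-word, and the reducedness constraints built into the buffer sets $V_1,V_2,V_3,V_4$ force the total product in $R_1*R_2$ to remain a non-trivial completely reduced word, giving trace zero. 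The main obstacle will be precisely this last verification: checking, for each of the finitely many exceptional tuples, that the buffer definitions truly prevent unexpected cancellation and that the trace-equality in the first subcase is exact, so the two sides cancel. Once this finite combinatorial bookkeeping is done, the overall estimate mirrors that of Lemma \ref{6-2} and yields the asserted bound $CM^4 3^{-M/2}\|\eta_1\|_2\|\eta_2\|_2$.
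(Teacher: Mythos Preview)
Your reduction for the case where $h$ contains a normalizer block (some $x_{i'_t}$ of type (d)) is exactly what the paper does: apply Lemma~\ref{6-2} to each pair $(g_i,h)$ separately and use the triangle inequality.

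The gap is in the pure $u$-word case $h\in \bigcup_p W_p^0$. You try to recycle the combinatorics of Lemma~\ref{6-2} and patch Claim~3 where it fails, but this does not go through. First, you only analyze the collapse $g_{i,j',j''}=1$ and overlook the symmetric collapse $h^*_{j''',j}=1$; the latter depends on the length of $h$, not on $l$, so it is not confined to $l\le 2$ and your case split never addresses it. Second, your cancellation claim that ``the trace is independent of the index $i$'' is not justified: the buffer sets $V_2$ and $V_3$ in the expansion of Claim~2 are defined via the boundary letters of $g_i$, so the summation ranges for $g_1$ and $g_2$ differ and the would-be matching traces do not line up term by term. More fundamentally, when $h$ and each $g_i$ are pure $u$-words the individual quantities $\langle\eta_1 g_i,h\eta_2\rangle$ are \emph{not} small---only their difference is---so any scheme that first bounds the generic-buffer contribution for each $g_i$ separately and then subtracts cannot succeed; the cancellation has to be built into the counting from the start.

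The paper does not attempt to salvage Lemma~\ref{6-2} here. It instead observes that for $h\in \bigcup_p W_p^0$ one is exactly in the radial-MASA situation and invokes the argument of Lemmas~5.1--5.3 and~6.1 of Cameron--Fang--Ravichandran--White verbatim. That argument exploits the equality $|g_1|=|g_2|$ structurally, through the interaction of $g_1-g_2$ with the operators $\chi_n$, rather than through the term-by-term trace comparison you propose, and is genuinely longer than the finite bookkeeping you sketch.
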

\begin{proof}
When $h\in W_l^0$, then the lemma is shown by the same argument as that of the proof of Lemmas 5.1, 5.2, 5.3 and 6.1 of Cameron--Fang--Ravichandran--White \cite{CFRW}.
When $h\not \in W_l^0$, then the lemma is shown by Lemma \ref{6-1}.
\end{proof}

\section{Asymptotic orthogonality property of the subalgebra}
In this section, by using the results of Sections 4 and 5, we show that the subalgebra has the asymptotic orthogonality property.
In order to achieve this, we have to reduce the general cases to the special case, that is, the case when the Haar unitaries come from generators of the irrational rotation $\mathrm{C}^*$-algebras.
\begin{lemm}
Let $\alpha $ be a free action of $\mathbf{Z}/m\mathbf{Z}$  on a diffuse separable abelian von Neumann algebra $C$ and $\beta $ be an ergodic action of $\mathbf{Z}$ on a diffuse separable abelian von Neumann algebra $D$.
Let $u,v$ be generating Haar unitaries of $C$ and $D$, respectively.
Then for any positive number $\epsilon >0$, there exists a normal injective *-homomorphism $\theta $ from $C\rtimes _\alpha \mathbf{Z}/m\mathbf{Z}$ into $D\rtimes _\beta \mathbf{Z}$ satisfying $\theta (C)=D$, $\| \theta (u) -v\| _2<\epsilon$.
\end{lemm}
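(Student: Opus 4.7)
The plan is to build $\theta$ in three movements: first construct a free $\mathbf{Z}/m\mathbf{Z}$-action $\gamma$ on $D$ that is implemented by a unitary $W\in D\rtimes_\beta\mathbf{Z}$, using Rokhlin's lemma for the aperiodic $\mathbf{Z}$-action $\beta$; second produce an equivariant $*$-isomorphism $\phi:(C,\alpha)\to(D,\gamma)$ via the uniqueness of free $\mathbf{Z}/m\mathbf{Z}$-actions on a diffuse separable abelian von Neumann algebra; third use the flexibility in the Rokhlin tower to arrange $\|\phi(u)-v\|_2<\epsilon$.

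Realize $D=L^\infty(Y,\nu)$ with $\beta$ a free ergodic measure-preserving $\mathbf{Z}$-action on a standard probability space (ergodicity plus diffuseness implies aperiodicity, hence essential freeness). By the Kakutani--Rokhlin theorem, modulo null sets $Y$ decomposes as
\[
Y=\bigsqcup_{i\in\mathbf{N}}\bigsqcup_{j=0}^{m-1}\beta^j(A_i),
\]
a disjoint union of Rokhlin towers of height $m$. Cyclically permuting the levels of each tower defines a free measure-preserving automorphism $\sigma:Y\to Y$ of order $m$ with $\sigma(y)=\beta^{k(y)}(y)$ for a bounded measurable $k:Y\to\mathbf{Z}$. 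The element
\[
W:=\sum_{n\in\mathbf{Z}}1_{\{k=n\}}\,u_\beta^n\in D\rtimes_\beta\mathbf{Z},
\]
where $u_\beta$ is the canonical unitary implementing $\beta$, is a unitary with $W^m=1$ and $\mathrm{Ad}(W)|_D(f)=f\circ\sigma^{-1}$. The resulting action $\gamma:=\mathrm{Ad}(W)|_D$ is a free $\mathbf{Z}/m\mathbf{Z}$-action on $D$. Since any two free $\mathbf{Z}/m\mathbf{Z}$-actions on a diffuse separable abelian von Neumann algebra are conjugate, there is a $*$-isomorphism $\phi:C\to D$ intertwining $\alpha$ and $\gamma$; the assignment $\theta|_C=\phi$, together with sending the canonical unitary implementing $\alpha$ to $W$, extends by the universal property of the crossed product to a normal $*$-homomorphism $\theta:C\rtimes_\alpha\mathbf{Z}/m\mathbf{Z}\to D\rtimes_\beta\mathbf{Z}$. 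It is trace-preserving and therefore injective.

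The main obstacle is arranging $\|\phi(u)-v\|_2<\epsilon$. Identifying $D\cong L^\infty(S^1,\mathrm{Leb})$ via $v$, the automorphism $\alpha$ corresponds on $S^1$ to the free measure-preserving $\mathbf{Z}/m\mathbf{Z}$-action $h$ determined by $\alpha(u)=h(u)$, while $\gamma$ corresponds to some free action $\tilde\sigma$ on $S^1$ depending on the Rokhlin tower. The condition $\phi(u)\approx v$ translates to the existence of a measure-preserving $f:S^1\to S^1$ close to the identity in $L^2$ that conjugates $\tilde\sigma^{-1}$ to $h$. Refining the Rokhlin tower so that the partition $\{\beta^j(A_i)\}$ pulled back to $S^1$ refines a sufficiently fine partition of $S^1$ into arcs, and arranging the cyclic permutation of levels to mimic how $h$ permutes the arcs, forces $\tilde\sigma$ to agree with $h^{-1}$ outside a set of arbitrarily small measure; the intertwining $f$ can then be taken close to the identity. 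The delicate coordination of Rokhlin's lemma with the fine arc partition of the spectrum of $v$ is the technical heart of the argument.
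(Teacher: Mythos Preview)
Your outline is sound and uses the same ingredients as the paper (a period-$m$ normalizing unitary of $D$ inside $D\rtimes_\beta\mathbf{Z}$, plus conjugacy of free $\mathbf{Z}/m\mathbf{Z}$-actions), but the organization is different, and your treatment of the $\epsilon$-condition is where the argument is thinnest.

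The paper proceeds in the opposite order. It first fixes the canonical $\theta_0:C\to D$ with $\theta_0(u)=v$, pushes $\alpha$ forward to $\alpha':=\theta_0\circ\alpha\circ\theta_0^{-1}$ on $D$, chooses a Rokhlin partition $\{p_i\}$ for $\alpha'$, and then approximates $v$ in $\|\cdot\|_2$ by a step function $v_0=\sum_{i,l}\lambda(i,l)\,\alpha'^{\,i-1}(q_l)$ where $\{q_l\}$ refines $p_1$. Only \emph{after} this approximation does it invoke ergodicity of $\beta$ to produce a normalizing unitary $w\in D\rtimes_\beta\mathbf{Z}$ with $w^m=1$ and $\mathrm{Ad}(w^{i-1})(q_l)=\alpha'^{\,i-1}(q_l)$. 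The homomorphism $\theta$ is then written down by an explicit formula (a twisted version of $\theta_0$ along the levels), and one checks directly that $\theta$ and $\theta_0$ agree on each projection $\alpha^{i-1}\theta_0^{-1}(q_l)$; since $\theta_0^{-1}(v_0)$ is a linear combination of these, $\theta(\theta_0^{-1}(v_0))=v_0$, whence $\|\theta(u)-v\|_2\le 2\|v-v_0\|_2<\epsilon$. No abstract conjugacy theorem and no ``$f$ close to the identity'' argument is needed.

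Your route builds $W$ first from a Rokhlin tower for $\beta$ and then appeals to abstract conjugacy to get $\phi$; at that point $\phi(u)$ is uncontrolled, and you must go back and refine the tower so that the induced $\gamma$ nearly matches $\alpha'$ in the $v$-coordinates, then argue that the conjugating map can be chosen near the identity. This can be made rigorous, but the step ``$\tilde\sigma$ agrees with $h^{-1}$ off a small set $\Rightarrow$ the intertwiner can be taken close to the identity'' is not automatic: two free $\mathbf{Z}/m\mathbf{Z}$-actions that agree off a small set need not be conjugated by something $L^2$-close to the identity without further bookkeeping on how the small exceptional set meets each orbit. The paper's device of fixing $v_0$ first and then building $w$ to respect that specific partition sidesteps this entirely and gives the $\epsilon$-estimate in one line.
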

\begin{proof}
Since both $u$ and $v$ are generating Haar unitary, the map $u^k \mapsto v^k$ extends to a *-isomorphism $\theta _0$  from $C$ onto $D$ satisfying $\theta _0 (u)=v$.
Since $\theta _0\circ \alpha \circ \theta _0^{-1}$ is a free action of $\mathbf{Z}/(m\mathbf{Z})$ on $D$, it is possible to find a partition $\{ p_i\}_{i=1}^m$ of unitary by projections in $D$ with
\[ \theta _0\circ \alpha \circ \theta _0^{-1}(p_i)=p_{i+1}\]
for any $i=1, \cdots ,m$, where $p_{m+1}=p_1$.
Then there exists a partition $\{ q_l\}_{l=1}^L$ of $p_1$ by projections in $D$  such that there exist complex numbers $\{ \lambda (i,l)\}_{i=1, \cdots , m, l=1, \cdots , L}\subset \mathbf{C}$ with
\[\| \sum _{i=1}^m\sum _{l=1}^L\lambda (i,l) \theta _0\circ \alpha ^{i-1}\circ  \theta _0^{-1}(q_l)-v\| _2<\frac{\epsilon }{2}.\]
Set 
\[ v_0:=\sum _{i=1}^m \sum _{l=1}^L \lambda (i,l) \theta _0 \circ \alpha ^{i-1}\circ \theta _0^{-1}(q_l).\]
Since the action $\beta $ of $\mathbf{Z}$ on $D$ is ergodic, there exists a unitary $w$ of $D\rtimes _\beta \mathbf{Z}$ satisfying the following two conditions.

\bigskip

(1) The unitary $w$ normalizes $D$.

(2) We have $w^{i-1}q_lw^{-i+1}=\theta _0\circ \alpha ^{i-1}\circ \theta _0^{-1}(q_l)$ for any $l=1, \cdots , L$, $i=1, \cdots , m+1$.

(3) We have $w^m=1$.

\bigskip

For $x\in C$, set
\[ \theta (x):=\sum _{i=1}^m  \theta _0\circ \alpha ^{i-1}\circ \theta _0^{-1}(p_1) w^{i-1}(\theta _0\circ \alpha ^{-i+1}(x))w^{-i+1}.\]
\textbf{Claim 1.} The map $\theta $ is a *-isomorphism from $C$ onto $D$.

\textit{Proof of Claim 1.} Notice that the map $\theta $ maps each $\alpha ^{i-1}\circ \theta _0^{-1}(q_l)$ to $\theta _0\circ \alpha ^{i-1}\circ \theta _0^{-1}(q_l)$ because we have
 \begin{align*}
 \ &\theta (\alpha ^{i'-1}\circ \theta _0^{-1}(q_l')) \\
    &=\sum _{i}\theta _0\circ \alpha ^{i-1}\circ \theta _0^{-1}(p_1)w^{i-1}(\theta _0\circ \alpha ^{-i+1}(\alpha ^{i'-1}\circ \theta _0^{-1}(q_{l'})))w^{-i+1} \\
    &=\sum _{i}\theta _0\circ \alpha ^{i-1}\circ \theta _0^{-1}(p_1) w^{i-1}(\theta _0\circ \alpha ^{-i+i'}\circ \theta _0^{-1}(q_{l'}))w^{-i+1} \\
    &=\sum _{i}\theta _0\circ \alpha ^{i-1}\circ \theta _0^{-1}(p_1) w^{i-1}(w^{i'-i}q_{l'}w^{-(i'-i)})w^{-i+1} \\
    &=\sum _{i}\theta _0\circ \alpha ^{i-1} \circ \theta _0^{-1}(p_1) \theta _0\circ \alpha ^{i'-1}\circ \theta _0^{-1} (q_{l'})  \\
    &=\theta _0\circ \alpha ^{i'-1}\circ \theta _0^{-1} (q_{l'}).
 \end{align*}
 Note that the third equality of the above computation follows from conditions (2) and (3).
On the other hand, the map $\mathrm{Ad}w^{i-1} \circ \theta _0\circ \alpha ^{-i+1}$ is a *-isomorphism from $C$ onto $D$. 
Hence so is its restriction to $C_{\alpha ^{i-1}\circ \theta _0^{-1}(q_l)}$.
Thus $\theta $ is a *-isomorphism from $C$ onto $D$.
\qed

\bigskip

\textbf{Claim 2.} We have $\theta \circ \alpha =(\mathrm{Ad}w|_D) \circ \theta $.

\textit{Proof of Claim 2.}
For $x\in C$, we have
\begin{align*}
\ &w\theta (x)w^{-1} \\
   &=w\Biggl( \sum _{i=1}^m \theta _0\circ \alpha ^{i-1}\circ \theta _0^{-1} (p_1) w^{i-1}\biggl( \theta _0\circ \alpha ^{-i+1}(x)\biggr) w^{-i+1}\Biggr) w^{-1} \\
   &=\sum _{i=1}^m (w^ip_1w^{-i}) (w^i \theta _0\circ \alpha ^{-i+1}(x)w^{-i}) \\
   &=\sum _{i=1}^m  \theta _0\circ \alpha ^i\circ \theta _0^{-1}(p_1) w^i(\theta _0\circ \alpha ^{-i} (\alpha (x)))w^{-i} \\
   &=\theta (\alpha (x)).
\end{align*}
\qed

\bigskip

\textbf{Claim 3.}
We have $\theta (\theta _0^{-1}(v_0))=v_0$.

\textit{Proof of Claim 3.}
As we have see in the proof of Claim 1, we have $\theta (\alpha ^{i-1}\circ \theta _0^{-1} (q_l)) =\theta _0\circ \alpha ^{i-1}\circ \theta _0^{-1}(q_l)$.
Since $v_0$ is a linear combination of $\theta _0\circ \alpha ^{i-1}\circ \theta _0^{-1}(q_l)$, we have $\theta (\theta _0^{-1}(v_0) )=v_0$.
\qed

\bigskip

By Claim 2, the *-isomorphism $\theta $ extends to a *-isomorphism form $C\rtimes _\alpha (\mathbf{Z}/(m\mathbf{Z})$ into $D \rtimes _\beta \mathbf{Z}$ satisfying $\theta (C)=D$, $\theta (\lambda _1^\alpha )=w$.
By Claim 3, we also have
\begin{align*}
\| \theta (u)-v)\| _2&\leq \| \theta (u-\theta _0^{-1}(v_0))\| +\| v_0-v\| \\
                                  &=\| u-\theta _0^{-1}(v_0) \| _2 +\| v_0-v\| _2 \\
                                  &=\| \theta _0 (u)-v_0\| _2+\| v_0-v\| _2 \\
                                  &=2\| v_0-v\| _2 \\
                                  &<\epsilon .
\end{align*}
\end{proof}
Let $N$ be the free product of two hyperfinite factor $R_i$ ($i=1,2$) of type $\mathrm{II}_1$ with respect to their traces.
For each $i=1,2$, choose a Haar unitary $w_i$ of $R_i$ which generates a Cartan subalgeba of $R_i$.
Set
\[ B:=\{ w_1+w_1^{-1}+w_2+w_2^{-1}\} '' \subset N.\]
For each non-negative integer $l\geq 0$, let $\chi _l^B$ be the sum of all reduced words of $\{ w_i^{\pm 1}\}_{i=1,2}$ with length $l$.
For each $i=1,2$, think of $R_i$ as an increasing union $\{  B_i\rtimes _{\alpha _i}G_k\} _{k=1}^\infty $ of von Neumann algebras  of type I, where $B_i:=\{ w_i\} ''$, $\{ G_k\}_{k=1}^\infty$ is an increasing sequence of finite abelian groups and $\alpha _i$ be a free ergodic probability measure preserving action of $\bigcup _kG_k$ on $B_i$.
Set 
\[ N_k:=(B_1\rtimes _{\alpha _1}G_k ) *(B_2\rtimes _{\alpha _2}G_k),\]
which is a von Neumann subalgebra of $N$.
\begin{prop}
\label{6-9}
Let $B\subset N$ be as above.
For $j=1, \cdots , J$, choose   $(x_n^j)\in (N^\omega \ominus B^\omega )\cap B'$.
Assume that for each $n$, there exists a positive integer $k_n>0$ with $x_n^j \in N_{k_n}$ for all $j=1, \cdots , J$.

Then there exists a family of weakly continuous  injective *-homomorphisms $\{ \theta _n :N_{k_n}\to M\} _{n=1}^\infty$  satisfying the following conditions.

\textup{(1)} For each $i=1,2$, we have $\theta _n(w_i)\to u_i$ as $n \to \omega$.

\textup{(2)} For any  $n$, any normalizing unitary $x\in B_i \rtimes _{\alpha _i} G_{k_n}$ of $B_i$ which is orthogonal to $B_i$,   the unitary $\theta _n(x)$ normalizes $\{ u_i\}''$ and is orthogonal to $\{ u_i\}''$.

\textup{(3)} We have $(\theta _n (x_n^j))\in (M^\omega \ominus A^\omega )\cap A'$.
\end{prop}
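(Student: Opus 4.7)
The plan is to take $\theta_n$ to be the free product of applications of the preceding lemma. Observe first that $R_i$ admits the crossed-product realization $\{u_i\}''\rtimes \mathbf{Z}$: since $v_iu_iv_i^{-1}=d^{-1}u_i$, the unitary $v_i$ implements an action of $\mathbf{Z}$ on $\{u_i\}''$ which is ergodic because $R_i$ is a factor. Applying the preceding lemma to each $i=1,2$ with $(C,u,\alpha)=(B_i,w_i,\alpha_i|_{G_{k_n}})$ and $(D,v,\beta)=(\{u_i\}'',u_i,\mathrm{Ad}(v_i^{-1}))$, with a sequence $\epsilon_n\downarrow 0$ to be specified, produces trace-preserving injective normal *-homomorphisms $\theta_n^i:B_i\rtimes_{\alpha_i}G_{k_n}\to R_i$ with $\theta_n^i(B_i)=\{u_i\}''$ and $\|\theta_n^i(w_i)-u_i\|_2<\epsilon_n$. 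The universal property of the reduced free product then yields a trace-preserving injective normal *-homomorphism $\theta_n:=\theta_n^1*\theta_n^2:N_{k_n}\to M$. Conditions (1) and (2) are immediate: any normalizing unitary of $B_i$ orthogonal to $B_i$ has the form $b\lambda_g^{\alpha_i}$ with $b\in B_i$ unitary and $g\neq 0$, and it is sent to $\theta_n^i(b)w^g$ where $w\in R_i$ is the implementing unitary from the preceding lemma, which normalizes $\{u_i\}''$ and whose nontrivial powers are orthogonal to $\{u_i\}''$ since they implement nontrivial automorphisms of the Cartan subalgebra.

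For condition (3), asymptotic commutation with $A^\omega$ follows from trace preservation. Writing $\chi_1^n:=\theta_n(\chi_1^B)$, we have
\[
\|[\theta_n(x_n^j),\chi_1^n]\|_2=\|\theta_n([x_n^j,\chi_1^B])\|_2=\|[x_n^j,\chi_1^B]\|_2\to 0
\]
along $\omega$ because $(x_n^j)\in B'$; combined with $\|\chi_1^n-\chi_1\|_2<2\epsilon_n\to 0$ and the uniform bound on $\|x_n^j\|_\infty$, this yields $\|[\theta_n(x_n^j),\chi_1]\|_2\to 0$, hence commutation with $A=\{\chi_1\}''$ in $M^\omega$.

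The main obstacle is the orthogonality $\lim_{n\to\omega}\|E_A(\theta_n(x_n^j))\|_2=0$. Trace preservation yields the analog for $A_n:=\theta_n(B)=\{\chi_1^n\}''$, namely $\|E_{A_n}(\theta_n(x_n^j))\|_2=\|E_B(x_n^j)\|_2\to 0$, so the task reduces to showing $\|(E_A-E_{A_n})(\theta_n(x_n^j))\|_2\to 0$. Since the uniformly bounded self-adjoint operators $\chi_1^n$ converge to $\chi_1$ in $\|\cdot\|_2$, they converge strongly on $L^2(M)$, and polynomials in $\chi_1^n$ acting on $\widehat{1}$ approximate their counterparts for $\chi_1$, so $E_{A_n}\to E_A$ strongly on $L^2(M)$. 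The technical heart of the proof is upgrading this strong convergence to a bound on the \emph{varying} sequence $\theta_n(x_n^j)$, since a fixed strong-convergence estimate is not enough. We exploit the fact that $\theta_n$ is chosen after the sequences $(x_n^j)$ are given: for each fixed $n$, we refine $\epsilon_n$ in the preceding lemma so as to force $\|(E_A-E_{A_n})(\theta_n(x_n^j))\|_2<1/n$ for each $j=1,\dots,J$. The validity of this refinement rests on the continuous dependence of $\theta_n(x_n^j)$ on the approximation data in the preceding lemma and on the density of polynomials in $\chi_1$ in $L^2(A)$.
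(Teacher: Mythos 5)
Your construction of $\theta_n$ --- free products of the embeddings supplied by the preceding lemma, with the approximation parameter chosen \emph{after} the sequences $(x_n^j)$ are fixed, i.e.\ a diagonal argument --- is essentially the paper's construction, and your treatment of the commutation half of condition (3) via $\|[\theta_n(x_n^j),\chi_1^n]\|_2=\|[x_n^j,\chi_1^B]\|_2$ and the perturbation $\|\chi_1^n-\chi_1\|_2\to 0$ is correct. The gap is in the orthogonality half. You correctly reduce to showing that $\|(E_A-E_{A_n})(\theta_n(x_n^j))\|_2$ can be forced small; writing $x_n^j=E_B(x_n^j)+x'$ with $x'\perp B$, the term involving $E_B(x_n^j)$ is indeed handled by polynomial density (since $\theta_n(E_B(x_n^j))=f(\chi_1^n)\to f(\chi_1)\in A$, the distributions of $\chi_1^n$ and $\chi_1$ being equal), so everything hinges on $\|E_A(\theta_n(x'))\|_2\to 0$ for a fixed $x'\in N_{k_n}\ominus B$ as the parameter shrinks. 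Your stated justification, ``continuous dependence of $\theta_n(x_n^j)$ on the approximation data,'' cannot deliver this: the images $\theta_n(\lambda_g^i)$ of the group unitaries are the implementing unitaries built inside the proof of the preceding lemma, and they do not converge to anything as $\epsilon_n\to 0$. Hence for the part of $x'$ supported on words containing some $\lambda_g^i$ with $g\neq 0$, no continuity or density argument controls $E_A$ of the image.

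What is needed at this point --- and what the paper's proof supplies --- is an \emph{exact} vanishing: $E_A\bigl(\theta_n(w_{i_1}^{l_1}\lambda_{g_1}^{i_1}\cdots w_{i_m}^{l_m}\lambda_{g_m}^{i_m})\bigr)=0$ whenever some $g_t\neq 0$. This follows from your own condition (2) (each $\theta_n(\lambda_g^i)$ normalizes $\{u_i\}''$ and is orthogonal to it, the latter using freeness of the action $\alpha_i$) together with freeness of $R_1$ and $R_2$ in $M$: the resulting reduced word is orthogonal to the closed span of all words in the letters $u_i^{\pm 1}$ alone, and that span contains $L^2(A)$. Combined with the observation that $\theta_n(b_1-b_2)$ converges in $\|\cdot\|_2$ to the difference of the two corresponding $u$-words of equal length, which is exactly orthogonal to $A$, and a $2\epsilon$-approximation of the fixed element $x'$ by finite linear combinations of these two types of vectors, one obtains $\lim\|E_A(\theta_\epsilon(x'))\|_2=0$ for each fixed $n$, which is precisely what your diagonal choice of $\epsilon_n$ requires. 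As written, this decisive step is asserted rather than proved, so the argument is incomplete; supplying the freeness/orthogonality computation above closes the gap and brings your proof in line with the paper's.
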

\begin{proof}
By the previous lemma, there exists a *-homomorphism $\overline{\theta} ^i_k$ from $B_i \rtimes _{\alpha _i} G_k$ into $\{ u_i,v_i\} ''$ satisfying $\overline{\theta }^i_k(B_i)=\{ u_i\}''$, $\| \overline{\theta} ^i_k(w_i)-u_i\|_2<2^{-k}$.
Set $\overline{\theta }_k:=\overline{\theta} ^1_k*\overline{\theta }^2_k$, which is a *-homomorphism from $N_k$ into $M$.

\bigskip

\textbf{Claim.} 
There exists an increasing sequence $\{ k'_n\}$ of natural numbers with the following conditions.

\bigskip

(1)  For any $n$, we have $k'_n>k_n$.

(2) For any $n$, we have  $\| E_A((\overline{\theta }_{k'_n} (x_n^j) )_n)\| _2 \leq \| x_n^j\| _2/2^n$.

(3) For any $n$, we have $\| \overline{\theta }_{k'_n}(w_i)-u_i\| _2<1/2^n$.

\bigskip

\textit{Proof of Claim.}
For each $n$ and $j$, since $x_n^j$ is orthogonal to $B$, $x_n^j$ can be approximated by a linear sum of vectors of the following forms.

\bigskip

(1) The vectors $b_1-b_2$, where $b_1$ and $b_2$ are words of $w_i^{\pm 1}$ of the same length.

(2) The words of $\lambda ^i_g$, $w_j^{\pm 1}$ for some $i,j=1,2$, $g\in (\bigcup _k G_k)\setminus \{0\}$ with at least one $\lambda ^i_g$.

\bigskip

If $g_t\not =0$ for some $t$, then we have
\[ E_A(\overline{\theta }_k(w_{i_1}^{l_1}\lambda _{g_1}^{i_1}\cdots w_{i_m}^{l_m}\lambda _{g_m}^{i_m}))=0,\]
which implies that the image of any vector of the second  form by $E_A\circ \overline{\theta }_k$ converges to $0$ as $k \to \infty$.
We also have $\overline{\theta }_k(w_{i_1}^{l_1}\cdots w_{i_m}^{k_m})\to u_{i_1}^{l_1}\cdots u_{i_m}^{l_m}$, which implies that any vector of the first form converges to a vector orthogonal to $A$.
Thus we have $E_A(\overline{\theta }_k (x_n^j) ) \to 0$ as $k \to \infty$.
We also have $\overline{\theta }_k(w_i)\to u_i$ as $k\to \infty$.
Thus Claim holds.
\qed

\bigskip

Set $\theta _n:=\overline{\theta }_{k'_n}$.

Then the family $\{ \theta _n:N_{k'_n}\to M\}$ is a family of weakly continuous injective *-homomorphisms satisfying conditions (1) and (2).
We show condition (3).
Since we have $\| E_A(\theta _n(x_n^j))\|  _2 <1/2^{n-1}\to 0$ as $n \to \omega$, we have $(\theta _n (x_n^j))\in M^\omega \ominus A^\omega $.
Next, we show that $\theta (x) $ commutes with $A$.
Take $y\in A$.
Since we have $\| \theta _n^{-1}(u_i)-w_i\| _2 =\| u_i -\theta _n(w_i)\| _2 \to 0$ as $n \to \omega$, the sequence $\{ \theta _n^{-1}(y)\}$ converges to an operator $z$ of $B$ in the strong operator topology.
Thus we have $x_n \theta _n^{-1}(y)-\theta _n^{-1}(y)x_n$ converges to $0$ as $n $ tends to $\omega$.
Hence $\theta (x)$ commutes with $A$.
\end{proof}

\begin{rema}
For each $x\in N$, set $\theta (x):=(\theta _n (E_{k_n}(x)))_n\in M^\omega$.
Then the map $\theta $ is a weakly continuous  injective *-homomorphism from $N $ into $M^\omega$.
\end{rema}

\begin{theo}
\label{6-10}
The subalgebra $B$ has the asymptotic orthogonality property.
\end{theo}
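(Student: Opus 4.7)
The plan is to reduce the theorem to the special case treated in Sections 3--5 via Proposition \ref{6-9}, then combine the support-localization for $A'$-centralizing sequences (Lemmas \ref{5-0} and \ref{5-5}) with the inner-product bounds of Lemmas \ref{6-1} and \ref{6-2}. Let $x^1 = (x_n^1), x^2 = (x_n^2) \in (N^\omega \ominus B^\omega) \cap B'$ and $y_1, y_2 \in N \ominus B$. By continuity of the four-linear form, I would first approximate each $y_j$ in $\| \cdot \|_2$ by finite linear combinations of completely reduced words lying in some $N_{k_0}$ and having the structural form required by the hypotheses of Lemmas \ref{6-1} and \ref{6-2}; by a diagonal argument I may also arrange $x_n^j \in N_{k_n}$ with $k_n \nearrow \infty$ as $n \to \omega$.

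Next, I would invoke Proposition \ref{6-9} to obtain injective, trace-preserving $*$-homomorphisms $\theta_n : N_{k_n} \to M$ with $\theta_n(w_i) \to u_i$ in $\| \cdot \|_2$ and $(\theta_n(x_n^j)) \in (M^\omega \ominus A^\omega) \cap A'$. Condition (2) of that proposition guarantees that $\theta_n$ carries normalizing unitaries of $B_i$ orthogonal to $B_i$ to normalizing unitaries of $\{ u_i \}''$ orthogonal to $\{ u_i \}''$, so $\theta_n(y_j)$ retains the word-like form needed to apply Lemmas \ref{6-1} and \ref{6-2}. Since $\theta_n$ is trace-preserving, the theorem reduces to showing
\[
\lim_{n \to \omega} \tau\bigl( \theta_n(y_1)^* \, \theta_n(x_n^1)^* \, \theta_n(y_2) \, \theta_n(x_n^2) \bigr) = 0.
\]

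Expanding each $\theta_n(x_n^j)$ in the Riesz basis of Corollary \ref{cor-3} splits it into a part $\eta^{j,0}_n$ supported on $\{ (\xi_m)_{r,s} \}$ and a part $\eta^{j,L}_n$ in $L$. Lemma \ref{5-0} applied to $\eta^{j,0}_n$ and Lemma \ref{5-5} applied to $\eta^{j,L}_n$ jointly show that, for any fixed $M_0 > 0$, the portion of $\theta_n(x_n^j)$ supported on indices with $r \leq M_0$ or $s \leq M_0$ has $\| \cdot \|_2$-norm tending to $0$. Choosing $M_0$ large enough relative to $k_0$ and applying the estimates of Lemmas \ref{6-1} and \ref{6-2} to the remaining ``bulk'' parts bounds the inner product by $C M_0^4 \, 3^{-M_0/2} \, \| x_n^1 \|_2 \| x_n^2 \|_2$, which can be made arbitrarily small.

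The main obstacle is coordinating the three length scales simultaneously: the scale $k_0$ fixed by the word approximation of $y_1, y_2$; the cutoff $M_0$, which must be taken much larger than $k_0$ so that Lemmas \ref{6-1} and \ref{6-2} yield a usable exponential decay; and the ultrafilter index $n$, which must be sent to $\omega$ only after $M_0$ is fixed. A secondary subtlety is the case in which some $y_j$ lies in $\mathrm{span}(W_l^0)$ (a pure $u$-word with no normalizing unitary), where Lemma \ref{6-2} does not apply directly; here one must instead use the difference form $g_1 - g_2$ of Lemma \ref{6-1}, exploiting orthogonality of $y_j$ to $B$ to arrange the needed cancellation between equal-length words. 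Verifying that the images $\theta_n(y_j)$ produced by Proposition \ref{6-9} genuinely satisfy the structural hypotheses on the $M$-side is the linchpin of the whole argument.
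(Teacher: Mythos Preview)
Your proposal is correct and follows essentially the same route as the paper: transfer via Proposition~\ref{6-9}, localize the support of $(\theta_n(x_n^j))$ using Lemmas~\ref{5-0} and~\ref{5-5}, then apply the inner-product estimates of Lemmas~\ref{6-1} and~\ref{6-2}, handling the pure-$u$-word case for $y_j$ through the difference form $g_1-g_2$. The paper organizes the cutoff as a diagonal sequence $M_n\to\infty$ rather than a fixed $M_0$ followed by $n\to\omega$, but this is just a cosmetic reordering of the same argument.
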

\begin{proof}
Choose $x^1=(x^1_n)$, $x^2=(x^2_n) \in (N^\omega \ominus B^\omega )\cap B'$ and $y^1,y^2\in N \ominus B$.
Let $\theta _n:N_{k_n}\to M$, $\theta :N \to M^\omega $ be injective *-homomorphisms chosen in the previous proposition corresponding to $(x_n^1)$, $(x_n^2)$.
Then we have $\theta (x^1), \theta (x^2)\in (M^\omega \ominus A^\omega )\cap A'$.
Hence by Lemmas \ref{6-1} and \ref{5-5}, we may assume that the vectors $\theta _n(x^1_n)$ and $\theta _n(x^2_n)$  lie in 
\begin{align*}
\ &\mathrm{span}\{ (\xi _m)_{2M_n+r, 2M_n+s}\mid  m\geq 0 , \ r,s\geq 0\} \\
   &\vee \Biggl( \mathrm{span}\{ \xi ^k_{2M_n+r, 2M_n+s}\mid k\in \mathbf{Z} , r,s\geq 0\} \cap L\Biggr) ,
\end{align*}
where $M_n \to \omega $ as $n \to \omega$.
On the other hand, when we regard $N$ as $\overline{\bigcup _k N_k}^{\mathrm{weak}}$, the Hilbert subspace $N \ominus B$ is linearly spanned by the following things.

\bigskip

(1) The vectors $b_1-b_2$, where $b_1$ and $b_2$ are words of $w_i^{\pm 1}$ of the same length.

(2) The words of $\lambda ^i_g$, $w_j^{\pm 1}$ for some $i,j=1,2$, $g\in (\bigcup _k G_k)\setminus \{0\}$ with at least one $\lambda ^i_g$.

\bigskip

Hence we may assume that $y_1$ and $y_2 $ are of the above forms.
Then by Proposition \ref{6-9} (1) (2), for each $n$, the vectors $\theta _n (y_1)$ and $\theta _n (y_2)$ satisfy assumptions of Lemmas \ref{6-1} and \ref{6-2}.
Hence by Lemmas \ref{6-1} and \ref{6-2}, we have $\tau (y_1^*(x^1_n)^*y_2x^2_n) \to 0$ as $n \to \omega$.
\end{proof}

\section{The subalgebra is maximal amenable.}
In this section, we show that the subalgebra $B$ is maximal amenable.
In order to achieve this, we use Proposition \ref{6-13}.
In the previous section, we have already shown that the subalgebra $B$ has the asymptotic orthogonality property.
Hence in order to show that the subalgebra is maximal amenable, it is enough to show that it is singular.
In order to achieve this, we show the mixing property.
\begin{defi}
\textup{(Definition 3.1 of Cameron--Fang--Mukherjee \cite{CFM})}
Let $A$ be a diffuse abelian von Neumann subalgebra of a factor $M$ of type $\mathrm{II}_1$.
The subalgebra $A$ is said to be mixing if for any $a, b\in M\ominus A$, any sequence $\{u _n\}$ of unitaries of $A$ which converges to $0$ weakly, we have $\| E_A(au_nb)\| _2\to 0$.
\end{defi}
It is known that if a subalgebra is mixing, then it is singular (Proposition 1.1 of Jolissaint--Stalder \cite{JS}).
\begin{lemm}
Let $w$ be a reduced word of $\{ w_1^{\pm 1}, w_2^{\pm 1}\}$ with $|w|=M>0$ and $g,h$ be words of the forms
\[ g=\lambda _{g_1}^{i_1}w_{i_1}^{k_1}\cdots \lambda _{g_n}^{i_n}w_{i_n}^{k_n},\]
\[ h=w_{j_1}^{l_1}\lambda _{h_1}^{j_1}\cdots w_{j_m}^{l_m}\lambda _{h_m}^{j_m},\]
respectively.
Suppose that at least one of $g_s$'s is non-zero and that one of $h_t$'s is non-zero.
Assume that they satisfy the following conditions.

\bigskip

\textup{(1)} The word $w $ is not $w_i^{\pm M}$.

\textup{(2)} The words $w_{i_n}^{k_n}$ and $w$ do not cancel, that is, we have $| w_{i_n}^{k_n}w|=|w_{i_n}^{k_n}||w|$.

\textup{(3)} The words $w$ and $w_{j_1}^{l_1}$ do not cancel. 

\bigskip

Then we have $E_B(gwh)=0$.

\end{lemm}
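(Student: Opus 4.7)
The plan is to leverage the free product Hilbert space decomposition together with the tower of inclusions $B \subset B_1 * B_2 \subset N$, where $B_i = \{w_i\}''$ as introduced in the text. Since $L^2(B) \subset L^2(B_1 * B_2)$ as closed subspaces of $L^2(N)$, it suffices to prove the stronger statement $E_{B_1 * B_2}(gwh) = 0$.

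First, I would put $gwh$ in reduced alternating form in $N = R_1 * R_2$ by grouping consecutive factors in the same $R_i$ into a single block. Under the implicit alternating convention $i_1 \neq i_2 \neq \cdots \neq i_n$ and $j_1 \neq j_2 \neq \cdots \neq j_m$, each $\lambda^{i_s}_{g_s} w_{i_s}^{k_s} = \alpha_{g_s}(w_{i_s}^{k_s}) \lambda^{i_s}_{g_s}$ is a nontrivial mean-zero element of $R_{i_s}$, and similarly each $w_{j_t}^{l_t} \lambda^{j_t}_{h_t}$ lies in $R_{j_t} \ominus \mathbb{C}$. Condition (1) (that $w$ is not a pure power $w_i^{\pm M}$) ensures that $w$ consists of at least two alternating $R_i$-blocks when viewed in the free product structure, and conditions (2) and (3) guarantee that the possible merges at the $g$-$w$ and $w$-$h$ junctions combine by concatenation, not by cancellation. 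The upshot is that the alternating decomposition of $gwh$ takes the form $z_1 z_2 \cdots z_N$ with each $z_s \in R_{j'_s} \ominus \mathbb{C}$ and $j'_1 \neq j'_2 \neq \cdots \neq j'_N$; crucially, the $g$-$w$ and $w$-$h$ junction blocks remain separate because they are insulated by at least one interior $w$-block in the opposite $R_i$.

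The second step is to locate a block $z_s$ lying in $R_{j'_s} \ominus B_{j'_s}$. By hypothesis, some $g_{s_0}$ is nonzero. If $s_0 < n$, then the standalone block $\lambda^{i_{s_0}}_{g_{s_0}} w_{i_{s_0}}^{k_{s_0}}$ equals $b \lambda^{i_{s_0}}_{g_{s_0}}$ for some $b \in B_{i_{s_0}}$, and in the crossed product expansion its coefficient at $\lambda^{i_{s_0}}_0$ vanishes, so it lies in $R_{i_{s_0}} \ominus B_{i_{s_0}}$. If $s_0 = n$, the possibly merged $g$-$w$ junction block still has the form $b' \lambda^{i_n}_{g_n}$ with $b' \in B_{i_n}$ (since merging only absorbs $B_{i_n}$-matter from $w$ on the right), so again it lies in $R_{i_n} \ominus B_{i_n}$.

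The conclusion then follows from the standard free product Hilbert space identification $L^2(N) \ominus \mathbb{C} = \bigoplus_{j'_1 \neq \cdots \neq j'_N} (L^2(R_{j'_1}) \ominus \mathbb{C}) \otimes \cdots \otimes (L^2(R_{j'_N}) \ominus \mathbb{C})$, together with the parallel identification of $L^2(B_1 * B_2) \ominus \mathbb{C}$ using $L^2(B_{j'_s})$ in place of $L^2(R_{j'_s})$. Under this identification $E_{B_1 * B_2}$ acts as the tensor product of the factorwise projections $L^2(R_j) \ominus \mathbb{C} \twoheadrightarrow L^2(B_j) \ominus \mathbb{C}$, so any elementary tensor with even one factor in $L^2(R_j) \ominus L^2(B_j)$ is killed. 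The main obstacle I anticipate is the careful bookkeeping at the two junctions: condition (1) is precisely what rules out the degenerate scenario in which $g$'s last block, all of $w$, and $h$'s first block collapse into a single $R_i$-block, inside which $\lambda^{i_n}_{g_n}$ and $\lambda^{j_1}_{h_1}$ could fuse into $\lambda^{i_n}_{g_n + h_1}$ and trivialize the $\lambda$-content.
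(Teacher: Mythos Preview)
Your argument is correct and is essentially the same as the paper's, which simply refers back to the alternating-word analysis in Claim~3 of the proof of Lemma~\ref{6-2}: both proofs put $gwh$ into alternating $R_1*R_2$-form, use condition~(1) to prevent the $g$-junction and $h$-junction blocks from collapsing together, and then observe that a block containing a nontrivial $\lambda^{i}_{g}$ survives and is orthogonal to $B_i$. Your packaging via the stronger statement $E_{B_1*B_2}(gwh)=0$ and the tensor-factorization of $E_{B_1*B_2}$ on the free product Fock space is a clean way to phrase exactly this; note also that your argument only uses that \emph{one} of the $g_s$ (or symmetrically one of the $h_t$) is nonzero, so the simultaneous hypothesis on $h$ is not actually needed here.
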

\begin{proof}
This is shown by the same argument as that of Claim 3 of the proof of Lemma \ref{6-2}.
\end{proof}
\begin{lemm}
Let $v=\sum _{p\geq M} \lambda _p w_p/\| w_p\| _2$ be a vector of $L^2B$ with $\| v\| _2=1$, where $w_p$ is the sum of all words of $w_i^{\pm 1}$ with length $p$.
Then there exists a positive constant $C_{g,h}>0$ with $\| E_B(gvh)\| _2\leq C_{g,h}3^{M/2}$.
\end{lemm}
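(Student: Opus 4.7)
The plan is to decompose $v$ along the orthogonal family $\{w_p/\|w_p\|_2\}_{p\geq M}$ of $L^2B$. These vectors are orthonormal in $L^2N$, since reduced words of different lengths lie in distinct $W_\ell^0$-components, and $\|w_p\|_2 = 2\cdot 3^{(p-1)/2}$ for $p\geq 1$ (there are $4\cdot 3^{p-1}$ reduced words of length $p$). Hence the hypothesis $\|v\|_2 = 1$ gives $\sum_{p\geq M}|\lambda_p|^2 = 1$ and
\[
E_B(gvh) \;=\; \sum_{p\geq M}\frac{\lambda_p}{\|w_p\|_2}\sum_{|w|=p}E_B(gwh).
\]

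I would then invoke the preceding lemma term by term: for every reduced word $w$ of length $p$, $E_B(gwh) = 0$ unless $w$ is \emph{bad}, meaning either $w \in \{w_i^{\pm p}:i=1,2\}$, or $w$'s first letter cancels against the last factor $w_{i_n}^{k_n}$ of $g$, or $w$'s last letter cancels against the first factor $w_{j_1}^{l_1}$ of $h$. The number of bad words of length $p$ is at most $4 + 2\cdot 3^{p-1}$, and for each the trivial estimate $\|E_B(gwh)\|_2 \leq \|g\|_\infty\|h\|_\infty$ holds since $w$ is unitary, yielding $\|E_B(gw_ph)\|_2 \leq C_{g,h}\cdot 3^{p-1}$.

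To combine these per-$p$ estimates into the claimed bound, I would exploit that $E_B(gw_ph)$, expanded in the radial basis $\{\chi_q/\|\chi_q\|_2\}_q$ of $B$, is supported in a narrow shell $|q - p| \leq |g|+|h|$ — a consequence of length-compatibility of reduction in the free product $N = R_1 * R_2$. This banded structure provides near-orthogonality of the contributions across different $p$, so that a grouped Cauchy--Schwarz applied to the outer sum extracts the factor $(\sum_{p\geq M}|\lambda_p|^2)^{1/2} = 1$ and dominates the remainder by a geometric tail controlled at the threshold $p = M$.

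The main obstacle is making the banded-support claim rigorous in this crossed-product setting: since $g$ and $h$ carry normalising unitaries $\lambda_{g_s}^{i_s}, \lambda_{h_t}^{j_t}$ of $R_i = B_i \rtimes G$, the free-group bookkeeping of Cameron--Fang--Ravichandran--White \cite{CFRW} is not directly available. The remedy is to track the alternating $R_i$-factor structure of $gwh$ after reduction and use that the $\lambda$-factors, while shifting Fourier indices inside each $R_i$, do not alter the radial word-length. With the bandedness in hand, the required estimate follows by routine counting of bad words together with the normalising factor $\|w_p\|_2 = 2\cdot 3^{(p-1)/2}$.
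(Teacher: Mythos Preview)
Your counting of bad words stops one level too early, and this is where the argument breaks. You observe that $E_B(gwh)=0$ whenever conditions (1)--(3) of the preceding lemma hold, and then count the words violating any one condition: this gives $O(3^{p-1})$ bad words and hence $\|E_B(gw_ph)\|_2 = O(3^{p-1})$. But the lemma can be \emph{iterated}: if the first letter of $w$ cancels against $w_{i_n}^{k_n}$, write $gw = g'w'$ with $|w'| = p-1$ and $g'$ of the same shape with $k_n$ replaced by $k_n - \mathrm{sgn}(k_n)$, and apply the lemma again. After at most $|k_n|$ cancellations at the left end and $|l_1|$ at the right, conditions (2) and (3) are automatically satisfied, so the only surviving bad middle segments are $w_{i'}^{\pm(p-j-j')}$ --- at most four of them for each pair $(j,j')$. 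This yields the paper's key estimate $\|E_B(gw_ph)\|_2 \leq 4|k_n|\,|l_1|$, a bound \emph{independent of $p$}. From there a plain triangle inequality and Cauchy--Schwarz on $\sum_{p\geq M}|\lambda_p|\,3^{-p/2}$ finish immediately.

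Your proposed salvage via banded radial support does not work, for two reasons. First, even granting orthogonality of the contributions $E_B(gw_ph)$ across $p$, combining the bound $\|E_B(gw_ph)\|_2 = O(3^p)$ with $\|w_p\|_2 \asymp 3^{p/2}$ gives $\sum_p |\lambda_p|^2\,3^{p}$, which \emph{grows} in $M$ rather than decays --- there is no geometric tail to extract. Second, the bandedness claim itself is false in this crossed-product setting: the normalising unitaries $\lambda_g^i$ act on $B_i=\{w_i\}''$ by measure-preserving automorphisms that need not send $w_i$ to a scalar multiple of $w_i$, so for instance $E_B(\lambda_g^1 w_1^p \lambda_{-g}^1)=E_B(\alpha_g(w_1)^p)$ can have Fourier coefficients with respect to $w_1$ spread over all of $\mathbf{Z}$, not concentrated near $p$. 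The assertion that ``$\lambda$-factors do not alter the radial word-length'' conflates the alternating-block length in the free product (which is indeed preserved) with the Fourier index inside each $B_i$ (which is not).
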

\begin{proof}
If either $g$ or $h$ is in $W_l^0$, then there is nothing to show (Here, we use the mixing property of the radial MASA implicitly. See Sinclair--Smith \cite{SS} Theorems 3.1 and 5.1. Although the statements are slightly different, they essentially show the mixing property of the radial MASA).
Write $g$ and $h$ as the following way.
\[ g=\lambda _{g_1}^{i_1}w_{i_1}^{k_1}\cdots \lambda _{g_n}^{i_n}w_{i_n}^{k_n},\]
\[ h=w_{j_1}^{l_1}\lambda _{h_1}^{j_1}\cdots w_{j_m}^{l_m}\lambda _{h_m}^{j_m}.\]
Set $|g|:=|k_n|$, $|h|:=|l_1|$.
By the previous lemma, we have $\| E_B(gw_ph)\| _2 \leq 4 |g||h|$.
Thus we have
\begin{align*}
\| E_B(gvh) \| _2 &\leq 4|g||h| \biggl( \sum _{p\geq M}|\lambda _p|\frac{1}{3^{p/2}}\biggr) \\
                       &\leq 4|g||h|\biggl( \sum _{p\geq M}|\lambda _p|^2\biggr) ^{1/2}\biggl( \sum _{p\geq M}\frac{1}{3^p}\biggr) ^{1/2} \leq C_{g,h}\frac{1}{3^{M/2}}.
\end{align*}
\end{proof}
\begin{prop}
\label{7-1}
The subalgebra $B$ is mixing in $N$. 
In particular, it is singular in $N$.
\end{prop}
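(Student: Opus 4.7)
The plan is to combine the two preceding lemmas with the fact that a weakly null sequence of unitaries in $B$ has almost all of its $L^2$-mass carried by the high-degree part of its radial expansion. First I would reduce the defining inequality of mixing to a $\|\cdot\|$-dense family of $(a,b)\in(N\ominus B)\times(N\ominus B)$: since $(a,b)\mapsto E_B(au_nb)$ is jointly $\|\cdot\|_2$-continuous when the $\|\cdot\|$-norms of $a,b$ remain bounded, it suffices to take $a,b\in\bigcup_k N_k\cap(N\ominus B)$. As already observed in the proof of Theorem \ref{6-10}, each such $a$ (respectively $b$) is a finite linear combination of vectors of two types: (i) radial differences $b_1-b_2$ with $|b_1|=|b_2|$ and (ii) reduced words containing at least one non-trivial $\lambda^i_g$; by bilinearity the two cases may be treated separately.

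Next I would use that $\{\chi_p^B/\|\chi_p^B\|_2\}_{p\ge 0}$ is an orthonormal basis of $L^2B$, a consequence of the recurrence $\chi_l^B\chi_1^B=\chi_{l+1}^B+3\chi_{l-1}^B$. Expanding $u_n=\sum_{p\ge 0}\lambda_p^{(n)}\chi_p^B/\|\chi_p^B\|_2$ and using that weak convergence $u_n\to 0$ forces $\lambda_p^{(n)}\to 0$ for every fixed $p$, I split $u_n=u_n^{<M}+u_n^{\ge M}$ along $p<M$ and $p\ge M$. Then $\|u_n^{<M}\|_2\to 0$ as $n\to\infty$ for each fixed $M$, while $\|u_n^{\ge M}\|_2\le 1$. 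For the low-degree piece,
\[
\|E_B(au_n^{<M}b)\|_2\le\|a\|\,\|b\|\,\|u_n^{<M}\|_2\xrightarrow[n\to\infty]{}0.
\]
For the high-degree piece, when $a,b$ are both of type (ii) the second preceding lemma delivers $\|E_B(au_n^{\ge M}b)\|_2\le C_{a,b}\cdot 3^{-M/2}$. When one of $a,b$ is of type (i), the mixing of the classical radial MASA of the free group factor (Sinclair--Smith, as cited in the proof of that lemma) handles the remaining case.

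Combining these, given $\varepsilon>0$ I first choose $M$ so large that $C_{a,b}\,3^{-M/2}<\varepsilon/2$, then $n$ so large that $\|a\|\,\|b\|\,\|u_n^{<M}\|_2<\varepsilon/2$, obtaining $\|E_B(au_nb)\|_2<\varepsilon$ by the triangle inequality. This establishes mixing of $B$ in $N$, and singularity follows at once from Proposition 1.1 of Jolissaint--Stalder cited at the start of the section.

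The main technical obstacle, already absorbed by the two preceding lemmas, is the combinatorial estimate $\|E_B(gw_ph)\|_2\le 4|g|\,|h|$ for $w_p=\chi_p^B$ with $p\ge M\gg|g|+|h|$: one must verify that only the comparatively few reduced words in $\chi_p^B$ that cancel nontrivially with $g$ or $h$ can contribute to $E_B(gw_ph)$. A Cauchy--Schwarz summation over $p\ge M$ against the geometric factor $1/\|\chi_p^B\|_2\sim 3^{-(p-1)/2}$ then produces the decay $3^{-M/2}$. Once this is in hand, the remaining steps---splitting the unitary in the Parseval basis of $L^2B$, dispatching the two word-types of $a,b$, and patching the pieces---are essentially formal.
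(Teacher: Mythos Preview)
Your proposal is correct and follows essentially the same approach as the paper's own proof: expand a weakly null unitary of $B$ in the radial basis $\{\chi_p^B/\|\chi_p^B\|_2\}$, observe that the low-degree coefficients vanish so the mass escapes to $p\ge M$ with $M\to\infty$, and then invoke the preceding lemma (which already absorbs the Sinclair--Smith mixing of the classical radial MASA for the type~(i) case and the combinatorial count for the type~(ii) case) to get the $3^{-M/2}$ decay. The paper's proof is two sentences; yours simply spells out the density reduction and the low/high splitting that the paper leaves implicit. One minor wording slip: $\bigcup_k N_k$ is $\|\cdot\|_2$-dense, not $\|\cdot\|$-dense, in $N$; the reduction works via Kaplansky's density theorem combined with the $\|\cdot\|_2$-continuity you state.
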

\begin{proof}
Notice that if $u^k \to 0$ weakly, then $u^k$ is approximated by operators $v $ of the form $\sum _{p\geq M_k}\lambda _p w_p/\| w_p\| _2$ in the strong operator topology, where $M_k \to \infty $.
Hence this is obvious by the previous lemma.
\end{proof}
\begin{theo}
The subalgebra $B$ is maximal amenable in $N$.
\end{theo}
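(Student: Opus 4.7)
The plan is to derive this theorem as an essentially immediate corollary of Proposition \ref{6-13}, since all the substantive work has already been carried out in Sections 3 through 7. Proposition \ref{6-13} asserts that a singular MASA of a $\mathrm{II}_1$ factor with the asymptotic orthogonality property is automatically maximal amenable, so it suffices to verify three things about $B \subset N$: it is maximal abelian, it is singular, and it has the asymptotic orthogonality property.

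The last two hypotheses are exactly the content of the preceding two sections: Theorem \ref{6-10} establishes the asymptotic orthogonality property of $B$, while Proposition \ref{7-1} shows that $B$ is mixing in $N$ and hence singular. The only ingredient that is not directly stated as a named result is the fact that $B$ is maximal abelian. Abelianness is immediate, since $B$ is generated by the single self-adjoint operator $w_1+w_1^{-1}+w_2+w_2^{-1}$. For maximality I would argue as follows. Any $x \in B'\cap N$ decomposes as $E_B(x)+(x-E_B(x))$ with both summands in $B'\cap N$, so it suffices to rule out a non-zero $x \in (B'\cap N)\cap (N\ominus B)$. For such an $x$, pick a weakly null sequence of unitaries $u_n \in B$ (available since $B$ is diffuse); using $x^* \in B'$ and $u_n \in B$, we get $x^* u_n x = u_n x^* x$, hence
\[
\|E_B(x^* u_n x)\|_2 = \|u_n E_B(x^*x)\|_2 = \|E_B(x^*x)\|_2 = \tau(x^*x)^{1/2}\cdot\|E_B(x^*x)/\tau(x^*x)^{1/2}\|_2 > 0
\]
for every $n$, which directly contradicts the mixing conclusion of Proposition \ref{7-1} applied to $a=x^*$, $b=x$.

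With the three hypotheses in place, Proposition \ref{6-13} yields maximal amenability at once. There is consequently no real obstacle remaining at this stage; the only mildly subtle point is the MASA verification above, and its proof is a one-line use of mixing. The genuine difficulties were all confronted earlier: the construction of the Radulescu-type Riesz basis in Corollary \ref{cor-3}, the localization of asymptotically central sequences away from low $(r,s)$-coordinates in Lemma \ref{5-5}, the word-counting cancellation estimates of Lemmas \ref{6-1} and \ref{6-2}, and the asymptotic embedding of the general hyperfinite setting into the irrational-rotation setting provided by Proposition \ref{6-9}.
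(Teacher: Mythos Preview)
Your proposal is correct and follows the same route as the paper: invoke Theorem \ref{6-10} for the asymptotic orthogonality property, Proposition \ref{7-1} for singularity, and conclude via Proposition \ref{6-13}. The only addition is your explicit verification that $B$ is a MASA (which the paper leaves implicit); your mixing argument for this is standard and correct, though the displayed chain would be cleaner as $\|E_B(x^*u_nx)\|_2=\|u_nE_B(x^*x)\|_2=\|E_B(x^*x)\|_2\geq\tau(E_B(x^*x))=\tau(x^*x)>0$.
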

\begin{proof}
By Theorem \ref{6-10}, the subalgebra $B$ has the asymptotic orthogonality property.
By Lemma \ref{7-1}, the subalgebra $B$ is a singular abelian von Neumann subalgebra of $N$.
Thus by Proposition \ref{6-13}, the subalgebra $B$ is maximal amenable in $N$.
\end{proof}

\end{document}